\newtheorem{theorem}{Theorem}[section]
\newtheorem{lemma}[theorem]{Lemma}
\newtheorem{corollary}[theorem]{Corollary}
\newtheorem{proposition}[theorem]{Proposition}
\theoremstyle{definition}
\newtheorem{definition}[theorem]{Definition}
\newtheorem{example}[theorem]{Example}
\newtheorem{question}[theorem]{Question}
\theoremstyle{remark}
\newtheorem{remark}[theorem]{Remark}
\numberwithin{equation}{section}
\def\Eqlfill@{\arrowfill@\Relbar\Relbar\Relbar}
\newcommand{\extendEql}[1][]{\ext@arrow 0359\Eqlfill@{#1}}
\begin{document}

\title[Schauder Bases and Operator Theory II: (SI) Schauder Operators]{Schauder Bases and Operator Theory II: strongly irreducible Schauder Operators}

\author{Yang Cao}
\address{Yang Cao, Department of Mathematics , Jilin university, 130012, Changchun, P.R.China}\email{caoyang@jlu.edu.cn}

\author{Youqing Ji}
\address{Youqing Ji, Department of Mathematics , Jilin university, 130012, Changchun, P.R.China}\email{jiyq@jlu.edu.cn}

\author{Geng Tian}
\address{Geng Tian, Department of Mathematics , Jilin university, 130012, Changchun, P.R.China}\email{tiangeng09@mails.jlu.edu.cn}

\date{Feb. 14, 2012}
\subjclass[2000]{Primary 47A55, 47A53, 47A16; Secondary 54H20,
37B99.} \keywords{.}
\thanks{}
\begin{abstract}
In this paper, we will show that for an operator $T$ which is injective and has dense range, there exists an invertible operator $X$ (in fact we can find $U+K$, where $U$ is an unitary operator and $K$ is a compact operator with norm less than a given positive real number) such that $XT$ is strongly irreducible.
As its application, strongly irreducible operators always exist in the orbit of Schauder matrices.
\end{abstract}
\maketitle

\section{Introduction and preliminaries}

From the viewpoint of matrices, Schauder bases and operator theory have natural relations. For example,
the column vector sequence of the matrix of an invertible operator
comprise a Riesz basis under any orthonormal basis (ONB). In our series paper, we focus on the
operator which has a matrix representation consisting of a Schauder basis under appropriate ONB (We shall call them Schauder operators and Schauder matrices respectively).

In the matrix theory of finite dimensional space, the famous Jordan canonical form theorem sufficiently reveals the internal structure of matrices. Works of Jiang C. L., Ji Y. Q. etc. show that the strongly irreducible operators can be seen as the generalized Jordan block
in the case of separable infinite dimensional Hilbert space.
It will be shown that multiplying the matrix of any invertible operator on the left of a Schauder matrix gets an equivalent Schauder
matrix (Theorem \ref{BPS}). Since Jordan blocks are the objects with relatively simple properties, a natural question can be raised as follows:

\begin{question}\label{question}
Given any Schauder matrix $M$, does there exist a matrix $X$ of some invertible operator such that $XM$ is strongly irreducible?
\end{question}

In this paper, we shall give an affirmative answer to this question.
Following theorem is our main result in operator theory which will be used to solve the question.

\begin{theorem}\label{Theorem: XT can be strongly irreducible}
Let $T\in\mathcal{L}(\mathcal{H})$ satisfy $KerT=(0)$ and $\overline{RanT}=\mathcal{H}$. Then for any $\epsilon>0$, there exist an unitary operator $U$ and a compact operator $K$ with $||K||<\epsilon$ such that $(U+K)T$ is strongly irreducible. Moreover, since $\epsilon$ can be chosen small enough, $U+K$ could be invertible.
\end{theorem}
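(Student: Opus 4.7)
The plan is to reduce $T$ to a positive canonical form via polar decomposition, handle the case when this positive operator is bounded below by transferring a density theorem for strongly irreducible operators, and then deal with the remaining case by constructing the unitary and the compact perturbation simultaneously along the spectral decomposition. Concretely, write $T = U_0 |T|$; since $\ker T = (0)$ and $\overline{\mathrm{Ran}\, T} = \mathcal{H}$, the partial isometry $U_0$ is in fact unitary while $P := |T|$ is positive, injective, with dense range. Producing a unitary $V$ and a compact $K'$ with $\|K'\| < \epsilon$ making $(V + K')P$ strongly irreducible yields the conclusion via $U := V U_0^{*}$ and $K := K' U_0^{*}$, which are unitary and compact with $\|K\| = \|K'\|$, and $(U + K)T = (V + K')P$. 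So we may assume $T = P$ is a positive, injective operator with dense range.

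Suppose first that $0 \notin \sigma(P)$, so $P$ is invertible. I will invoke the standard density result from the theory of strongly irreducible operators (of Jiang--Wang type): for every $A \in \mathcal{L}(\mathcal{H})$ and every $\delta > 0$ there is a compact $L$ with $\|L\| < \delta$ such that $A + L$ is strongly irreducible. Apply this to $A := V_0 P$ for an arbitrary unitary $V_0$, with threshold $\delta := \epsilon / \|P^{-1}\|$, and set $K' := L P^{-1}$. Then $K'$ is compact with $\|K'\| \le \|L\|\,\|P^{-1}\| < \epsilon$, and $(V_0 + K')P = V_0 P + L$ is strongly irreducible.

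The essential case is $0 \in \sigma(P)$, where $P^{-1}$ is unbounded so the transfer $L \mapsto L P^{-1}$ is unavailable; the set of compacts of the form $K'P$ with $\|K'\| < \epsilon$ is a proper subspace of the small compacts, so $V$ and $K'$ must be designed together. Using the spectral projections $E_n := \chi_{[1/n,\,\|P\|]}(P)$, which increase strongly to $I$ since $P$ has no eigenvalue at $0$, fix an ONB compatible with the stratification $\mathcal{H} = \bigoplus_n (E_n - E_{n-1})\mathcal{H}$. Choose $V$ to act as a unitary shift linking successive strata and let $K'$ supply a decaying compact coupling with summable weights so that $\|K'\| < \epsilon$; the resulting $(V + K')P$ is then of weighted-shift-with-small-coupling type, and strong irreducibility follows from a direct commutant computation showing that any idempotent commuting with it must be scalar. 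I expect the main obstacle to be precisely this last verification: the weights in $V$ and $K'$ are constrained by the spectral data of $P$, and they must be tuned carefully so that no nontrivial block-diagonal reduction survives the coupling. Finally, for $\epsilon < 1$ the operator $U + K = U(I + U^{*}K)$ is invertible by the Neumann series, yielding the moreover clause.
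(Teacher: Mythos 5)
Your reduction to a positive operator $P$ via polar decomposition and your ``moreover'' argument (Neumann series for $\epsilon<1$) match the paper, but there are two genuine gaps. The first is in your invertible case: the density result you invoke is false as you state it. There is no theorem saying every $A\in\mathcal{L}(\mathcal{H})$ is a small compact perturbation of a strongly irreducible operator; the correct statement (the paper's Theorem \ref{JJWPS}, due to Ji--Jiang--Sun--Wang--Power) requires $\sigma(A)$ connected (the sharp version requires $\sigma_W(A)$ connected). Connectedness cannot be dropped: a strongly irreducible operator has connected spectrum, and $\sigma_e$ is invariant under compact perturbation, so for $P=I\oplus 3I$ and your ``arbitrary'' unitary $V_0=I$, any $L$ with $\|L\|<1$ leaves $\sigma(V_0P+L)$ trapped in two disjoint neighborhoods of $1$ and $3$, each meeting the spectrum because $\{1,3\}=\sigma_e$ persists; hence $V_0P+L$ is never strongly irreducible. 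The whole point of the paper's Case~1 is to \emph{choose} the unitary so that connectedness holds: after a Weyl--von Neumann perturbation $K_1$ making $P+K_1$ diagonal with monotone subsequences tending to $\lambda_{\min}$ and $\lambda_{\max}$ of $\sigma_e$, a shift $U_1$ on part of the rearranged basis turns $U_1(P+K_1)$ into $B\oplus D$ with $B$ an invertible bilateral weighted shift whose spectrum is the annulus $\{\lambda_{\min}\le|\lambda|\le\lambda_{\max}\}$ (Lemma \ref{shift}) and $\sigma(D)$ inside it; only then is Theorem \ref{JJWPS} applicable, and the perturbation is transferred by $P^{-1}$ as you do.

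The second gap is that your essential case $0\in\sigma(P)$ is only a program, and the step you yourself flag as the ``main obstacle'' is where essentially all of the paper's work lies. When $\sigma_e(P)=\{0\}$ or $0$ is isolated in $\sigma_e(P)$, your picture is basically right and corresponds to the paper's Cases~2 and~3: a single unitary shift produces a bilateral weighted shift (weights tending to $0$, or with the upper weights dominated by the lower ones), and strong irreducibility follows from an explicit commutant computation or from the Cowen--Douglas Lemma \ref{bilateral shift}. But when $0$ is a non-isolated point of $\sigma_e(P)$, a single shift ``linking successive strata'' with a small summable coupling is not known to work and is not verified: the coupling must have the form $K'P$ with $\|K'\|<\epsilon$, the strata $(E_n-E_{n-1})\mathcal{H}$ need not match in dimension, and above all you give no mechanism forcing every commuting idempotent to be trivial. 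The paper handles this by treating each spectral band $\sigma(T_j)\subseteq[\lambda_{j+1},\lambda_j]$ separately (using Weyl--von Neumann inside each band, where $T_j^{-1}$ is bounded, plus Ji's Lemma \ref{Ji} for bands filling a whole interval), and then gluing the strongly irreducible blocks by the Jiang--Wang criterion (Lemma \ref{JW}): the off-diagonal couplings are chosen outside the ranges of the relevant Rosenblum operators $\mathcal{T}_{A,B}$, and the required triviality of $\mathrm{Ker}\,\mathcal{T}_{\overline{B}_j,\overline{B}_i}$ for adjacent bands is not automatic (their spectra touch), but is forced by a quantitative arrangement of the diagonal entries (the growth condition $(2.6)$) followed by an explicit matrix computation. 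Without an argument of this kind, or a worked-out commutant computation for your coupled shift, the central case of the theorem remains unproved.
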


\begin{remark} This theorem does not only motivated by basis theory observations.  There exist some other observations. We summarize them as follows:

1. Physical backgrounds. In the Mathematics and Physics Interdisciplinary Seminar at Jilin University, Professor Hai-Jun Wang of Physics Department told us that multiplying an operator $X$ on the left of a Hamiltonian operator $T$ can be viewed as an evolution (or an evolving step) of a physics system which is encoded by the Hamiltonian operator $T$. Moreover, the strong  irreducibility of an operator $T$ ensures that there is no subsystem developing independently to the rest of the system. He also suggested us to consider the Schauder basis as a sequence of "superposition states". From this viewpoint, the special case of theorem \ref{Theorem: XT can be strongly irreducible} in which $T$ is a self-adjoint operator may be more interesting. It could be useful in characterizing evolution of observables.

2. Compare to the classical results in the theory of strongly irreducible operators.
D.A.Herrero very concerned about compact perturbations of strongly irreducible operators and asked that for any operator $T$ with connected spectrum and any $\epsilon>0$, does there exist a compact operator $K$ with $||K||<\epsilon$ such that $T+K$ is strongly irreducible? This question is the essential strengthen of the question asked by G.H.Gong.
Among many years, C.L.Jiang, Y.Q.Ji, Z.Y.Wang, S.H.Sun and S.Power obtain the affirmative answer step by step.
\begin{theorem}\cite{JSW,JJW1,JJW2,JJW3,JPW,Ji,JJ}\label{JJWPS}
Let $T\in\mathcal{L}(\mathcal{H})$, $\sigma(T)$ connected. Then for any $\epsilon>0$, there exists a compact operator $K\in\mathcal{L}(\mathcal{H})$ with $||K||<\epsilon$ such that $T+K$ is strongly irreducible.
\end{theorem}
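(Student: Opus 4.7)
The strategy is to reduce $T$ to a canonical form via a small compact perturbation, build a strongly irreducible model matching that form, and then connect the two via the (refined) similarity orbit theorem, all the while keeping norm estimates under control. Strong irreducibility being a similarity invariant, the final output inherits the property from the model.

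First I would reduce $T$ to a manageable shape. By the Apostol--Foias--Voiculescu approximation theorem, there is a compact $K_1$ with $\|K_1\|<\epsilon/2$ such that $T+K_1$ has a convenient triangular (or block-Cowen--Douglas) representation whose diagonal entries are drawn from $\sigma(T)$ and whose essential spectral data can be prescribed inside a small neighborhood of $\sigma(T)$. The hypothesis that $\sigma(T)$ is connected is crucial here: were it disconnected, a Riesz projection onto one component would already give a nontrivial idempotent in the commutant of any sufficiently small perturbation, ruling out strong irreducibility. Connectedness ensures the normal form has no spectrally forced direct summand.

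Second, I would exhibit a strongly irreducible model $S$ with the same spectrum, essential spectrum, and Fredholm index function as $T+K_1$. The natural source is the Cowen--Douglas class $B_n(\Omega)$ for $\Omega$ a bounded connected open set lying in a small neighborhood of $\sigma(T)$, with $n$ matching the index data. For $n=1$, any element of $B_1(\Omega)$ is automatically strongly irreducible: its commutant is realized through functional calculus over the connected domain $\Omega$, which contains no nontrivial idempotent. For $n>1$, one perturbs a base Cowen--Douglas operator by a small nilpotent weighted shift to collapse commuting idempotents to $\{0,I\}$, a construction used in the cited works of Jiang and Ji.

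Third, I would invoke the refined similarity-orbit theorem of Herrero, sharpened in the series of papers cited as \cite{JSW,JJW1,JJW2,JJW3,JPW,Ji,JJ}: two operators with matching spectral picture are not merely approximately similar, but can be made to differ by a compact perturbation of arbitrarily small norm. Applied to $T+K_1$ and $S$, this produces a compact $K_2$ with $\|K_2\|<\epsilon/2$ and $T+K_1+K_2$ similar to $S$, hence strongly irreducible. Setting $K=K_1+K_2$ finishes the argument.

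The principal obstacle is the third step. Producing a genuinely small-norm compact $K_2$ requires sharp quantitative control: the invertible $V$ implementing the similarity between $T+K_1+K_2$ and $S$ can be far from unitary, and hence can dramatically distort norms during the conjugation. Bounding $\|K_2\|$ while still landing exactly inside the similarity orbit of a strongly irreducible model is precisely what the cited papers develop over many years, and it is the technical heart of the theorem; the qualitative statement that spectrally matched operators lie in the closure of each other's similarity orbits is considerably easier than the norm-controlled version used here.
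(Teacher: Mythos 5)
First, a point of comparison: the paper does not prove Theorem \ref{JJWPS} at all. It is imported verbatim from the cited literature \cite{JSW,JJW1,JJW2,JJW3,JPW,Ji,JJ} and used as a black box in Case 1 of the proof of Theorem \ref{Theorem: XT can be strongly irreducible}. So there is no internal argument to measure your proposal against; the only fair comparison is with the strategy of the cited papers themselves.

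Measured that way, your outline identifies the right architecture --- Apostol-type triangular normal forms, strongly irreducible Cowen--Douglas models with prescribed spectral picture, and a norm-controlled passage into the similarity orbit of the model --- and your observation that connectedness of $\sigma(T)$ is genuinely necessary (a separated piece of spectrum yields a nontrivial Riesz idempotent that survives small perturbations) is correct. But as a proof it has a genuine gap, and you locate it yourself: step three, the assertion that spectral-picture--matched operators differ by an arbitrarily small compact perturbation up to exact similarity, is not a lemma you can quote and then declare the theorem proved --- it \emph{is} the theorem, in only slightly disguised form. The qualitative similarity orbit theorem of Apostol--Fialkow--Herrero--Voiculescu gives $S\in\overline{\mathcal{S}(T+K_1)}$ but says nothing about realizing the approximation by a small compact perturbation landing exactly in $\mathcal{S}(S)$; extracting that refinement, with control of $\|K_2\|$ despite the non-unitary similarity, is precisely the content of the seven cited papers (essentially normal case, biquasitriangular case, quasitriangular case, general case), developed over several years. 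Step two is also thinner than you suggest for index $n>1$: elements of $B_n(\Omega)$ are not strongly irreducible in general, and the models are built as upper-triangular operator matrices over $B_1$ pieces glued by off-diagonal entries lying outside the range of the relevant Rosenblum operators (the mechanism of Lemma \ref{JW} in this paper), not by ``a small nilpotent weighted shift.'' In short: a correct map of the territory, but the road itself is missing.
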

Theorem \ref{Theorem: XT can be strongly irreducible} is a parallel consideration along the line of above theorem. Theorem \ref{Theorem: XT can be strongly irreducible} focus on the action of multiplying an operator on the left while theorem \ref{JJWPS} focus on the action of adding
an operator.

3. Basis theory observations. Operator which is injective and has dense range has a natural basis theory understanding,
that is the main topic of our next paper. We introduce the main result here:
\begin{theorem}\label{Theorem: main theorem 1 sec3}
Let $T \in\mathcal{L}(\mathcal{H})$, then $T$ is a Schauder operator if and only if $T$ is  injective and has dense range.
\end{theorem}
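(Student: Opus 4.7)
The forward direction is a short verification. If $\{T e_n\}$ is a Schauder basis of $\mathcal{H}$ for some ONB $\{e_n\}$, then $\overline{\text{Ran}(T)} \supseteq \overline{\text{span}}\{T e_n\} = \mathcal{H}$, giving dense range. For injectivity, if $T x = 0$, expand $x = \sum_n \langle x, e_n \rangle e_n$ and use continuity of $T$ to get $0 = \sum_n \langle x, e_n \rangle T e_n$; uniqueness of Schauder-basis expansions then forces $\langle x, e_n \rangle = 0$ for all $n$, hence $x = 0$.

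The substantive reverse direction requires an explicit construction of the ONB. The first step of my plan is a \emph{polar decomposition reduction}. Write $T = U|T|$; since $\ker T = \ker |T| = (0)$ and $\overline{\text{Ran}(T)} = \mathcal{H}$, the partial isometry $U$ is in fact unitary, and $\{T e_n\}$ is a Schauder basis if and only if $\{|T| e_n\}$ is one. Thus it suffices to handle positive, injective $T$ with dense range.

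Second, I would build the ONB $\{e_n\}$ by an inductive construction against a fixed target Schauder basis $\{v_n\}$ of $\mathcal{H}$ (one may simply take an ONB). Given orthonormal $e_1, \ldots, e_{n-1}$, the restriction of $T$ to $\{e_1,\ldots,e_{n-1}\}^\perp$ has range whose closure has codimension at most $n-1$, because $T$ on all of $\mathcal{H}$ has dense range and $T(\text{span}\{e_1, \ldots, e_{n-1}\})$ is finite-dimensional. I pick a unit vector $e_n$ in this orthogonal complement so that $T e_n$ approximates $v_n$ modulo $\text{span}\{T e_1, \ldots, T e_{n-1}\}$ within tolerance $\varepsilon_n$, on a schedule summable enough to invoke a Paley--Wiener / Krein--Milman--Rutman stability theorem: if $\sum_n \|v_n^*\| \cdot \|v_n - T e_n\| < 1$, where $\{v_n^*\}$ are the biorthogonal functionals of $\{v_n\}$, then $\{T e_n\}$ is a Schauder basis equivalent to $\{v_n\}$.

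I expect the main difficulty to be reconciling the inductive step with the stability criterion: the natural approximation available at stage $n$ is of $v_n$ \emph{modulo} the span of previous $T e_j$'s, whereas stability requires control of $\|v_n - T e_n\|$ itself. Translating between these two estimates involves the biorthogonal norms of the evolving system and may require either passing to the spectral model $|T| = M_\phi$ on some $L^2(\mu)$ and constructing the ONB directly from adapted multipliers, or a more elaborate bootstrap in which the target basis and its biorthogonal structure are updated in tandem with $\{e_n\}$.
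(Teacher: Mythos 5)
Your forward direction and the polar-decomposition reduction are fine (and the reduction is the same first move this paper makes in the proof of Theorem \ref{Theorem: XT can be strongly irreducible}); note, however, that the present paper never proves Theorem \ref{Theorem: main theorem 1 sec3} at all --- it is quoted as the main result of the companion paper \cite{Cao-Tian-Hou} --- so what must be judged is your construction, and there the plan has a structural flaw, not just a technical one. The greedy scheme against a fixed target ONB $\{v_n\}$ combined with the stability criterion $\sum_n\|v_n^*\|\,\|v_n-Te_n\|<1$ cannot succeed unless $T$ is invertible, which is precisely the trivial case: that criterion yields that $\{Te_n\}$ is a basis \emph{equivalent} to $\{v_n\}$, and if $\{v_n\}$ is an ONB this means $Te_n=SWe_n$ for a unitary $W$ ($We_n=v_n$) and an isomorphism $S$, i.e.\ $T=SW$ is invertible. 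For the essential case of non-invertible (e.g.\ compact) injective $T$ with dense range, $\|Te_n\|\to 0$ along every orthonormal sequence, so $\|v_n-Te_n\|\to 1$ and the sum can never be $<1$; the same normalization problem already destroys the inductive step, since the vector $u\perp e_1,\dots,e_{n-1}$ with $Tu\approx v_n$ modulo $\mathrm{span}\{Te_1,\dots,Te_{n-1}\}$ has huge norm, and after rescaling to the unit vector $e_n$ the image $Te_n$ is tiny. (You also never arrange that the orthonormal sequence $\{e_n\}$ is complete, i.e.\ actually an ONB.) So the difficulty you flag at the end is not a matter of ``reconciling estimates'': no argument that produces $\{Te_n\}$ as a small perturbation of a prescribed basis can prove this theorem.

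The workable route is the one you mention only as a fallback, the spectral model for $|T|$. Fix $0<q<1$ and let $\mathcal{H}_k$, $k\geq 0$, be the spectral subspace of $|T|$ for $(q^{k+1}\|T\|,\,q^{k}\|T\|]$; injectivity of the positive operator $|T|$ gives $\bigoplus_k\mathcal{H}_k=\mathcal{H}$, each $\mathcal{H}_k$ is invariant, and the restriction $|T|\,|_{\mathcal{H}_k}$ is invertible on $\mathcal{H}_k$ with condition number at most $1/q$. Choosing an ONB of each $\mathcal{H}_k$ and concatenating them in block order gives an ONB $\{e_n\}$ whose image $\{|T|e_n\}$ is, blockwise, a Riesz basis of $\mathcal{H}_k$ with constant at most $1/q$; since the blocks are mutually orthogonal, every partial-sum projection of the concatenated system splits as the identity on the completed blocks plus a within-block Riesz partial sum, hence is bounded by $1/q$ uniformly, and the system is complete because $|T|\mathcal{H}_k=\mathcal{H}_k$. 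By the standard criterion (nonzero vectors, uniformly bounded partial-sum estimates, dense span --- compare Lemma \ref{Matrix Form} and Proposition \ref{BC1}) $\{|T|e_n\}$ is a Schauder basis, and composing with the unitary from the polar decomposition finishes the proof. This also explains why Olevskii's Theorem \ref{Theorem: Olevskii} needs extra spectral hypotheses: those are required only to make the resulting basis conditional and quasinormal, whereas mere injectivity and dense range produce a (generally unconditional, non-normalized) basis --- consistent with the impossibility of your ``equivalent to an ONB'' target in the non-invertible case.
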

With this theorem in mind, question \ref{question} naturally stimulate us to consider it towards theorem \ref{Theorem: XT can be strongly irreducible}.
\end{remark}

We organize this paper as follows. The second section contains the proof of our main theorem \ref{Theorem: XT can be strongly irreducible}.
In the last section  we apply the main theorem into basis theory.
To generalize the class of bases which can be studied by
operator theory, we also introduce the \textsl{blowing up matrix} in that section.

\subsection{Notations}
Let $\mathcal{H}_1,\mathcal{H}_2,\mathcal{H}$ be complex separable Hilbert spaces. Denote by $\mathcal{L}(\mathcal{H}_1,\mathcal{H}_2)$ the set of all
bounded linear operators mapping $\mathcal{H}_1$ into $\mathcal{H}_2$. Denote by $\mathcal{K}(\mathcal{H}_1,\mathcal{H}_2)$ the subset
of $\mathcal{L}(\mathcal{H}_1,\mathcal{H}_2)$ of all compact operators. We simply write $\mathcal{L}(\mathcal{H})$ and $\mathcal{K}(\mathcal{H})$ instead
of $\mathcal{L}(\mathcal{H},\mathcal{H})$ and $\mathcal{K}(\mathcal{H},\mathcal{H})$ respectively. For $T\in\mathcal{L}(\mathcal{H}_1,\mathcal{H}_2)$, denote the kernel of
$T$ and the range of $T$ by Ker$T$ and Ran$T$ respectively.
Let $T\in\mathcal{L}(\mathcal{H})$, denote by $\sigma(T)$, $\sigma_p(T)$, $\sigma_e(T)$,
$\sigma_{lre}(T)$ and $\sigma_W(T)$ the spectrum, the point spectrum, the essential
spectrum, the Wolf spectrum and the Weyl spectrum of $T$
respectively. Denote by $\sigma_0(T)$ the set of isolated
points of $\sigma(T)\backslash\sigma_e(T)$. For $\lambda\in \rho_{s-F}(T)$ $(\extendEql{\mbox{def}}\mathbb{C}\backslash
\sigma_{lre}(T))$, ${\rm ind}(\lambda-T)={\rm dimKer}(\lambda-T)-{\rm
dimKer}(\lambda-T)^*$. Denote
$\rho_{s-F}^{(n)}(T)=\{\lambda\in\rho_{s-F}(T);~\text{ind}(\lambda-T)=n\}$,
where $-\infty \leq n\leq \infty$.  $T$ is said to be quasi-triangular if there is a sequence
$\{P_n\}_{n=1}^\infty$ of finite rank projections increasing to the unit operator $I$ with respect to
the strong operator topology such that $\lim\limits_{n\rightarrow\infty}||(I-P_n)TP_n||=0$. It is well-known
that $T$ is quasi-triangular if and only if ind$(T-\lambda)\geq0$ for all $\lambda\in\rho_{s-F}(T)$. $T$ is said
to be strongly irreducible if there are no nontrivial idempotents commuting with $T$. A Cowen-Douglas operator
is an operator $T$ satisfying the following conditions:

(1) There is a nonempty connected open subset $\Omega$ of
$\rho_{s-F}^{(n)}(T)$ for a natural
number n;

(2) $T-\lambda$ is surjective for each
$\lambda\in\Omega$;

(3) $\bigvee _{\lambda\in \Omega}{\rm ker}(T-\lambda)=\mathcal{H}$.

If the conditions above are satisfied, we shall write $T\in\mathcal{B}_n(\Omega
)$.

\section{Main Results}

\subsection{} First, let us
introduce some known results. Denote by $r(T)$ the spectral radius of $T$. It is well-known that $$r(T)=\lim\limits_{n\rightarrow\infty}||T^n||^{\frac{1}{n}}.$$
A bilateral weighted shift $T\in\mathcal{L}(\mathcal{H})$ is an operator that maps each vector in some orthonormal basis $\{e_n\}_{n\in\mathbb{Z}}$ into a scalar multiple of the next vector, $Te_n=\omega_ne_{n+1}$,
for all $n\in\mathbb{Z}$.

\begin{lemma}\cite{ALS}\label{shift}
1) If $T\in\mathcal{L}(\mathcal{H})$ is an invertible bilateral weighted shift, then the spectrum of $T$ is the annulus $\{\lambda\in\mathbb{C};[r(T^{-1})]^{-1}\leq |\lambda|\leq r(T)\}$.

2) If $T\in\mathcal{L}(\mathcal{H})$ is a bilateral weighted shift that is not invertible, then the spectrum of $T$ is the disc $\{\lambda\in\mathbb{C};|\lambda|\leq r(T)\}$.
\end{lemma}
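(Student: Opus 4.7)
The plan is to exploit the circular symmetry of bilateral weighted shifts together with the Gelfand spectral radius formula applied to both $T$ and $T^{-1}$. First I would establish rotational invariance of $\sigma(T)$: defining the diagonal unitary $U_\theta$ by $U_\theta e_n = e^{in\theta} e_n$ for each real $\theta$, a direct computation gives $U_\theta^{*} T U_\theta = e^{-i\theta} T$, so $T$ is unitarily equivalent to $e^{-i\theta} T$ and hence $\sigma(T) = e^{-i\theta} \sigma(T)$ for every $\theta$. Consequently $\sigma(T)$ is a union of circles centered at the origin.

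Next I would pin down the outer radius using $r(T)=\lim_n ||T^n||^{1/n}$: the inclusion $\sigma(T)\subseteq\{|\lambda|\leq r(T)\}$ is automatic, and since some point of $\sigma(T)$ attains modulus $r(T)$, rotational invariance forces the entire circle $\{|\lambda|=r(T)\}$ into $\sigma(T)$. For part (1), when $T$ is invertible, $T^{-1}$ is itself unitarily equivalent to a bilateral weighted shift (with weights $\omega_{-n-1}^{-1}$ after a flip of the index), so the same argument applied to $T^{-1}$ yields $\sigma(T^{-1})\subseteq\{|\mu|\leq r(T^{-1})\}$ together with $\{|\mu|=r(T^{-1})\}\subseteq \sigma(T^{-1})$; taking reciprocals places $\sigma(T)$ inside the claimed annulus and puts its inner circle into $\sigma(T)$. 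For part (2), $T$ is non-invertible exactly when $\inf_n |\omega_n|=0$; picking indices $n_k$ with $|\omega_{n_k}|\to 0$ yields $||Te_{n_k}||\to 0$, so $0\in \sigma(T)$.

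The main obstacle is then filling in the interior: showing that every circle $\{|\lambda|=\rho\}$ with $r(T^{-1})^{-1}\leq \rho\leq r(T)$ in case (1), or $0\leq \rho\leq r(T)$ in case (2), actually lies in $\sigma(T)$. My approach would be to construct approximate eigenvectors for each such $\lambda$ by starting from the formal recursion $\omega_{n-1}x_{n-1}=\lambda x_n$, solving it in both directions from $x_0=1$, truncating to a finite window $[-N,N]$, and normalizing. The resulting unit vector $\xi_N$ then has $(T-\lambda)\xi_N$ supported at the two endpoints of the window, and the key estimate is that these endpoint contributions become negligible compared to $||\xi_N||$ precisely when $|\lambda|$ lies in the claimed range; this uses the asymptotic control over partial products of the weights encoded by $r(T)$ on the positive side and by $r(T^{-1})$ on the negative side. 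An alternative finishing move is to invoke the general fact that the spectrum of a bilateral weighted shift is connected, after which the two bounding circles already produced suffice. Either way, this analytic truncation-and-normalization step is the heart of the lemma; everything else is forced by symmetry and the spectral radius formula.
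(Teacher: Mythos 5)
The paper does not prove this lemma at all---it is quoted from Shields \cite{ALS}---so your proposal has to be measured against the standard argument there. Your preliminary steps are correct: rotational invariance of $\sigma(T)$ via the diagonal unitaries, the inclusion $\sigma(T)\subseteq\{|\lambda|\le r(T)\}$ together with the full outer circle lying in $\sigma(T)$, the inner circle of radius $r(T^{-1})^{-1}$ obtained by applying the same reasoning to $T^{-1}$, and $0\in\sigma(T)$ in the non-invertible case. The genuine gap is exactly the step you yourself call the heart of the lemma. The truncation-and-normalization construction of approximate eigenvectors for $T-\lambda$ cannot work for every $\lambda$ in the open annulus, because such $\lambda$ need not belong to the approximate point spectrum of $T$ at all. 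Concretely, take $\omega_n=2$ for $n\ge 0$ and $\omega_n=1$ for $n<0$, so $r(T)=2$ and $r(T^{-1})^{-1}=1$. For $1<|\lambda|<2$ the formal solution of $\omega_{n-1}x_{n-1}=\lambda x_n$ with $x_0=1$ blows up in \emph{both} directions (like $(2/|\lambda|)^n$ to the right and $|\lambda|^{n}$ to the left), so the truncated unit vectors concentrate at the endpoints of the window and $\|(T-\lambda)\xi_N\|$ stays bounded away from $0$; indeed $T-\lambda$ is bounded below there (injective with closed range, index $-1$), and its non-invertibility is visible only through the adjoint, for which $y_{n+1}=(\overline{\lambda}/\omega_n)y_n$ gives a square-summable eigenvector. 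So the claimed key estimate ("endpoint contributions become negligible precisely when $|\lambda|$ lies in the claimed range") is false, and even when approximate eigenvectors do exist (oscillating weights) they are localized near transition regions of the weight sequence, not in a symmetric window around $0$.

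Your alternative finishing move---connectedness of $\sigma(T)$ for a bilateral weighted shift---would indeed suffice once the two extreme circles are in $\sigma(T)$ (a connected rotation-invariant compact set realizes every intermediate modulus), but that connectedness is essentially the content of the lemma and cannot just be invoked; note that bilateral weighted shifts may be highly reducible (the unweighted shift has commutant $L^\infty$ of the circle), so it is not a soft structural fact. The standard way to close the gap, and in substance the argument in \cite{ALS}, is spectral-projection based: if some circle $|\lambda|=\rho$ with $r(T^{-1})^{-1}<\rho<r(T)$ (resp.\ $0<\rho<r(T)$) lay in the resolvent set, form the Riesz idempotent $P$ over that circle. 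Rotation invariance of the enclosed spectral part gives $U_\theta PU_\theta^{*}=P$ for all $\theta$, hence $P$ is diagonal, $P=\mathrm{diag}(p_n)$ with $p_n\in\{0,1\}$; commuting with $T$ and $\omega_n\neq 0$ forces $p_n=p_{n+1}$, so $P=0$ or $P=I$, contradicting that both spectral parts are nonempty (the inner one contains the circle of radius $r(T^{-1})^{-1}$, or $0$ in case (2); the outer one contains the circle of radius $r(T)$). A Fredholm-index computation as in the paper's Lemma \ref{bilateral shift} also works, but only for weight configurations of that monotone type, not verbatim in general; and in case (2) one should note separately that a vanishing weight splits $T$ into two unilateral-type shifts, for which the disc statement is immediate.
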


\begin{lemma}\cite{JLY}\label{JLY}
Let $T$ be a bilateral weighted shift operator with weight sequence $\{\omega_n\}_{n\in\mathbb{Z}}$. Then $T$ is a Cowen-Douglas operator if and only if there exists a $\lambda_0\in\rho_F(T)$ such that ind$(T-\lambda_0)=1$.
\end{lemma}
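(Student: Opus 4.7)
The strategy is to exploit the explicit form of eigenvectors of a bilateral weighted shift. Writing $Te_n = \omega_n e_{n+1}$, a formal eigenvector $x = \sum_n a_n e_n$ for eigenvalue $\lambda\neq 0$ must satisfy the first-order recurrence $\omega_{n-1}a_{n-1} = \lambda a_n$, so the formal eigenspace is at most one-dimensional and each coefficient takes the form $a_n(\lambda) = c_n \lambda^{-n}$, where $c_n$ is a fixed product (or reciprocal product) of the weights. Square-summability of $\{a_n(\lambda)\}$ then cuts out an annular region of $\lambda$ for which the formal eigenvector genuinely lives in $\mathcal{H}$.

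The forward direction is almost immediate from this observation. If $T\in\mathcal{B}_k(\Omega)$, then $T-\lambda$ is surjective for $\lambda\in\Omega$, so $\operatorname{ind}(T-\lambda)=\dim\ker(T-\lambda)=k$; the one-dimensional-eigenspace fact forces $k=1$, and any $\lambda_0\in\Omega\subseteq \rho_F(T)$ satisfies $\operatorname{ind}(T-\lambda_0)=1$. For the converse, given $\lambda_0\in\rho_F(T)$ with $\operatorname{ind}(T-\lambda_0)=1$, let $\Omega$ be the connected component of $\rho_F(T)$ containing $\lambda_0$. The diagonal unitary $e_n\mapsto e^{in\theta}e_n$ conjugates $T$ into $e^{-i\theta}T$, so $\rho_F(T)$ and the index function are rotationally invariant; thus $\Omega$ is rotationally invariant. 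By Lemma \ref{shift}, $\Omega$ is not the unbounded component (on which $T-\lambda$ is invertible and hence index $0$), so $\Omega$ is an open annulus or punctured disc. Local constancy of the Fredholm index gives $\operatorname{ind}(T-\lambda)=1$ throughout $\Omega$, and combined with $\dim\ker(T-\lambda)\leq 1$ this yields $\dim\ker(T-\lambda)=1$ and surjectivity of $T-\lambda$, verifying conditions (1) and (2) of the Cowen--Douglas definition.

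It remains to verify condition (3), and this is where the genuine content lies. I would normalize $a_0$ so that $x_\lambda = \sum_n c_n\lambda^{-n}e_n$ depends analytically on $\lambda\in\Omega$, then extract each basis vector by contour integration: fixing $r$ with $\{|\lambda|=r\}\subset\Omega$, uniform convergence of the Laurent series on this circle together with the residue theorem yields
\[
\oint_{|\lambda|=r}\lambda^{m}\,x_\lambda\,d\lambda \;=\; 2\pi i\,c_{m+1}\, e_{m+1}.
\]
Since each $c_{m+1}\neq 0$ and the closed linear span $\bigvee_{\lambda\in\Omega}\ker(T-\lambda)$ is preserved under such uniformly convergent vector-valued integrals, every $e_{m+1}$ lies in this span, which therefore equals $\mathcal{H}$. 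The main obstacle is precisely this spanning step: the Fredholm and kernel-dimension data are purely algebraic once eigenspaces are known to be one-dimensional, but (3) requires identifying $\Omega$ concretely inside the annulus of convergence of the eigenvector series and running the Laurent-coefficient-extraction argument.
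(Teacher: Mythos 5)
The paper itself does not prove this lemma: it is imported from \cite{JLY} (see Remark 2.3, ``One can see \cite{JLY} for details''), so there is no internal argument to compare against. Judged on its own, your proof is essentially correct and is the standard route: one-dimensionality of the formal eigenspaces gives the forward direction; the circular symmetry $D_\theta TD_\theta^*=e^{i\theta}T$ for the diagonal unitary $D_\theta e_n=e^{in\theta}e_n$, together with constancy of the Fredholm index on components, gives conditions (1)--(2) of the definition on a rotation-invariant $\Omega$; and the Laurent-coefficient extraction $\oint_{|\lambda|=r}\lambda^{m}x_\lambda\,d\lambda=2\pi i\,c_{m+1}e_{m+1}$ is legitimate, since the tail norm $\bigl(\sum_{|n|>N}|c_n|^2r^{-2n}\bigr)^{1/2}$ depends only on $r$, so the eigenvector series converges uniformly on the circle and Riemann sums stay in the closed span; this yields condition (3), which, as you say, is where the real content lies.

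Two points should be made explicit. First, you silently assume all $\omega_n\neq 0$: you divide by weights to write $a_n(\lambda)=c_n\lambda^{-n}$, and you need $c_{m+1}\neq 0$ at the end. This is not cosmetic. With a zero weight the shift decomposes as a direct sum, and for example $T=2S^{*}\oplus\tfrac{1}{2}S$ (with $S$ the unilateral shift; this is a bilateral weighted shift with exactly one zero weight) satisfies $\mathrm{ind}(T-\lambda)=1$ for $\tfrac{1}{2}<|\lambda|<2$, yet every eigenvector of $T$ lies in the first summand, so condition (3) fails for every admissible $\Omega$ and $T$ is not Cowen--Douglas; the result of \cite{JLY} is stated for shifts with invertible weights, and in the present paper the lemma is only applied to shifts with strictly positive weights, so the assumption is harmless but must be declared. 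Second, your identification of $\Omega$ as ``an open annulus or punctured disc'' is asserted rather than proved: you should rule out $0\in\Omega$ (with nonzero weights $T$ is injective, so if $0$ lay in the Fredholm domain the index there would be $\leq 0$, not $1$), or better, observe that none of this is needed --- rotation invariance already places the whole circle $|\lambda|=|\lambda_0|$ inside $\Omega$ (and $\lambda_0\neq 0$ for the same injectivity reason), which is all the contour argument uses.
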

\begin{remark}\label{remark}
In the above lemma, if there exists a $\lambda_0\in\rho_F(T)$ such that ind$(T-\lambda_0)=1$, then $T$ must belong to $\mathcal{B}_1(\Omega
)$ for some open connected subset of $\mathbb{C}$. One can see \cite{JLY} for details.
\end{remark}

\begin{lemma}\cite{JJW1}\label{B1}
Any operator $T\in\mathcal{B}_1(\Omega
)$ is strongly irreducible.
\end{lemma}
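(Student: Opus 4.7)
The plan is to show that any bounded idempotent $P$ commuting with $T\in\mathcal{B}_1(\Omega)$ must be $0$ or $I$. The key feature to exploit is that for a Cowen--Douglas operator of index $1$, the eigenspaces $\ker(T-\lambda)$ are one-dimensional for every $\lambda\in\Omega$, and the closed span of these eigenspaces is all of $\mathcal{H}$ by condition (3) of the definition.

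Concretely, I would first note that if $PT=TP$ then $P$ commutes with $T-\lambda$ for every $\lambda$, so $P$ leaves each eigenspace $\ker(T-\lambda)$ invariant. Because $\dim\ker(T-\lambda)=1$, this forces $Pe_\lambda=c(\lambda)\,e_\lambda$ for any choice of nonzero $e_\lambda\in\ker(T-\lambda)$, where $c(\lambda)$ is a scalar. From $P^2=P$ one immediately gets $c(\lambda)^2=c(\lambda)$, hence $c(\lambda)\in\{0,1\}$ for every $\lambda\in\Omega$.

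The decisive step is to upgrade this pointwise dichotomy to a global one. Here I would invoke the standard fact from Cowen--Douglas theory (as used in \cite{JLY} and the remark above) that one can choose a holomorphic frame $\lambda\mapsto e_\lambda$ on any simply connected subdomain of $\Omega$. Then $Pe_\lambda$ is a holomorphic $\mathcal{H}$-valued function, and extracting $c(\lambda)$ via a suitable inner product (for instance $c(\lambda)=\langle Pe_\lambda,e_\lambda\rangle/\|e_\lambda\|^2$) shows that $c$ is continuous on $\Omega$. Being a continuous $\{0,1\}$-valued function on the connected set $\Omega$, $c$ must be constant.

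It remains to conclude in each of the two cases. If $c\equiv 0$ then $P$ annihilates every eigenvector $e_\lambda$ with $\lambda\in\Omega$; since $\bigvee_{\lambda\in\Omega}\ker(T-\lambda)=\mathcal{H}$ and $P$ is bounded, $P=0$. If $c\equiv 1$ then $P$ acts as the identity on the same total set of vectors, so $P=I$. In either case the idempotent is trivial, proving strong irreducibility. The main technical obstacle is the existence of the local holomorphic section of eigenvectors used to guarantee continuity of $c(\lambda)$; once this is taken from the Cowen--Douglas toolbox, the rest of the argument is short.
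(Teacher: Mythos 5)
Your argument is correct, but note that the paper offers no proof of this lemma at all: it is quoted verbatim from \cite{JJW1}, so there is no in-paper argument to compare yours against. What you give is the standard proof, and it is sound: an idempotent $P$ commuting with $T$ preserves each one-dimensional eigenspace $\ker(T-\lambda)$, hence acts there as a scalar $c(\lambda)$ with $c(\lambda)^2=c(\lambda)$; the only nontrivial ingredient is the existence of local holomorphic (continuous would suffice) sections $\lambda\mapsto e_\lambda$ of the eigenvector bundle, which is indeed part of the standard Cowen--Douglas toolbox, and since $c(\lambda)$ does not depend on the choice of nonzero $e_\lambda$, the local frames patch to show $c$ is continuous, hence constant on the connected set $\Omega$. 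Condition (3) of the definition ($\bigvee_{\lambda\in\Omega}\ker(T-\lambda)=\mathcal{H}$) together with linearity and boundedness of $P$ then yields $P=0$ or $P=I$, as you conclude.
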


\begin{lemma}\label{bilateral shift}
Let $T\in\mathcal{L}(\mathcal{H})$ be a bilateral weighted shift with weight sequence $\{\omega_n\}_{n\in\mathbb{Z}}$, $\omega_n>0$ for $n\in\mathbb{Z}$ and max$\{\omega_n;n\geq0\}<$ min$\{\omega_n;n<0\}$. Then $T\in\mathcal{B}_1(\Omega
)$ for some open connected subset $\Omega$ of $\mathbb{C}$ and hence strongly irreducible.
\end{lemma}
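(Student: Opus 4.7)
The plan is to produce a single point $\lambda_0\in\rho_F(T)$ with $\text{ind}(T-\lambda_0)=1$; together with Lemma \ref{JLY} and Remark \ref{remark} this places $T$ in $\mathcal{B}_1(\Omega)$ for some open connected $\Omega\subset\mathbb{C}$, and Lemma \ref{B1} then delivers strong irreducibility. Set $a=\sup_{n\geq 0}\omega_n$ and $b=\inf_{n<0}\omega_n$, so $0<a<b$; I will show that every $\lambda_0$ in the annulus $a<|\lambda_0|<b$ does the job.

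The idea is to exploit the sharp gap between the two halves of the weight sequence by splitting $\mathcal{H}=\mathcal{H}_+\oplus\mathcal{H}_-$ with $\mathcal{H}_+=\overline{\text{span}}\{e_n:n\geq 0\}$ and $\mathcal{H}_-=\overline{\text{span}}\{e_n:n<0\}$. Since $Te_n$ lies in $\mathcal{H}_+$ for every $n\geq -1$ but in $\mathcal{H}_-$ for $n\leq -2$, the operator $T$ is block upper triangular in this decomposition,
\[
T=\begin{pmatrix} A & R \\ 0 & D \end{pmatrix},
\]
where $A$ is a unilateral weighted forward shift on $\mathcal{H}_+$ with weights $\omega_0,\omega_1,\ldots\leq a$; where $D$ is (after reindexing $f_k:=e_{-k}$) the unilateral weighted backward shift on $\mathcal{H}_-$ given by $Df_1=0$ and $Df_k=\omega_{-k}f_{k-1}$, whose nonzero weights are bounded below by $b$; and where $R$ is the rank-one map $e_{-1}\mapsto\omega_{-1}e_0$.

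Because $\|A\|\leq a<|\lambda_0|$, the operator $A-\lambda_0$ is invertible on $\mathcal{H}_+$ by Neumann series. The crux of the argument is to show that $D-\lambda_0$ is Fredholm of index $1$. Solving the eigenvalue recurrence $Df=\lambda_0 f$ yields the unique (up to scale) square-summable eigenvector whose $k$-th coordinate is $\lambda_0^{k-1}/(\omega_{-2}\cdots\omega_{-k})$, which lies in $\ell^2$ thanks to the contraction ratio $|\lambda_0|/b<1$; the adjoint recurrence forces the only solution to be trivial, so $\ker(D^*-\overline{\lambda_0})=\{0\}$. For surjectivity of $D-\lambda_0$ I solve the forward recurrence $a_{j+1}=(b_j+\lambda_0 a_j)/\omega_{-(j+1)}$ for an arbitrary target $g=\sum b_j f_j\in\mathcal{H}_-$; the same contraction ratio combined with Young's convolution inequality (convolution of an $\ell^1$ geometric sequence with an $\ell^2$ sequence is $\ell^2$) forces the solution into $\ell^2$. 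Hence $\dim\ker(D-\lambda_0)=1$ and $D-\lambda_0$ is onto, so $\text{ind}(D-\lambda_0)=1$.

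Finally, $R$ has rank one and is therefore compact, so $T-\lambda_0$ is a compact perturbation of the block-diagonal operator $(A-\lambda_0)\oplus(D-\lambda_0)$; it is Fredholm with $\text{ind}(T-\lambda_0)=\text{ind}(A-\lambda_0)+\text{ind}(D-\lambda_0)=0+1=1$, so $\lambda_0\in\rho_F(T)$. Invoking Lemma \ref{JLY}, Remark \ref{remark}, and Lemma \ref{B1} completes the proof. The principal obstacle is establishing surjectivity of $D-\lambda_0$; everything else is a bookkeeping exercise in block-triangular Fredholm theory, and the surjectivity itself is driven entirely by the gap hypothesis $a<b$ via the contraction $|\lambda_0|/b<1$.
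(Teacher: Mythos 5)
Your proposal is correct and follows essentially the same route as the paper: split the bilateral shift at the weight gap into two unilateral halves, choose $\lambda_0$ strictly between $\max\{\omega_n; n\geq 0\}$ and $\min\{\omega_n; n<0\}$, verify that $T-\lambda_0$ is Fredholm of index $1$, and conclude via Lemma \ref{JLY}, Remark \ref{remark} and Lemma \ref{B1}. The only cosmetic difference is that the paper exhibits an explicit right inverse of $T-\lambda_0$ (using right invertibility of the large-weight block and invertibility of the small-weight block) and computes its kernel directly, whereas you compute the index of each diagonal block and absorb the rank-one coupling by compact-perturbation stability of the Fredholm index.
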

\begin{proof}
Let $\mathcal{H}_1=\bigvee_{n\leq0}\{e_n\}$, $\mathcal{H}_2=\bigvee_{n\geq1}\{e_n\}$, then $$T=\begin{bmatrix}
\begin{bmatrix}
\ddots\\
\ddots&0\\
&\omega_{-1}&0\end{bmatrix}&0\\
\begin{bmatrix}\cdots&0&\omega_0\\
&0&0\\
{\mathinner{\mkern2mu\raise1pt\hbox{.}\mkern2mu
\raise4pt\hbox{.}\mkern2mu\raise7pt\hbox{.}\mkern1mu}}&&\vdots\\
\end{bmatrix}&\begin{bmatrix}0\\
\omega_1&0\\
&\ddots&\ddots\end{bmatrix}
\end{bmatrix}\begin{matrix}\vdots\\
e_{-1}\\
e_0\\
e_1\\
e_2\\
\vdots\end{matrix}\extendEql{\mbox{def}}\begin{bmatrix}A\\
C&B\end{bmatrix}\begin{matrix} \mathcal{H}_1\\
        \mathcal{H}_2\end{matrix}.$$

Choose any $\lambda$ such that max$\{\omega_n;n\geq0\}<\lambda<$ min$\{\omega_n;n<0\}$. We will show that $T-\lambda$ is a Fredholm operator with index 1.

On the one hand, ${\rm inf}_{x\neq0}\frac{||A^*x||}{||x||}\geq$ min$\{\omega_n;n<0\}>\lambda$, $||(A^*-\lambda)(x)||\geq||A^*x||-\lambda||x||\geq({\rm min}\{\omega_n;n<0\}-\lambda)||x||$ for any $x\in\mathcal{H}_1$, hence $(A-\lambda)^*$ is bounded from below and $A-\lambda$ is right invertible. Let $X_{11}\in\mathcal{L}(\mathcal{H}_1)$ satisfy $(A-\lambda)X_{11}=I$.

On the other hand, $r(B)=\lim\limits_{k\rightarrow\infty}({\rm sup}_{n\geq0}\{\omega_{n+1}\omega_{n+2}\cdots\omega_{n+k}\})^{\frac{1}{k}}<\lambda$, hence $B-\lambda$ is invertible.

Let $$X=\begin{bmatrix}X_{11}&0\\
-(B-\lambda)^{-1}CX_{11}&(B-\lambda)^{-1}\end{bmatrix}\begin{matrix} \mathcal{H}_1\\
        \mathcal{H}_2\end{matrix},$$
then $(T-\lambda)X=I$. It is easy to compute that Ker$(T-\lambda)=\{\alpha x;\alpha\in\mathbb{C}\}$, where \begin{eqnarray*}x=\sum_{n\in\mathbb{Z}}x_ne_n, ~x_0=1,~ x_n=\dfrac{\omega_0\omega_1\cdots\omega_{n-1}}{\lambda^n},~
x_{-n}=\dfrac{\lambda^{n}}{\omega_{-1}\omega_{-2}\cdots\omega_{-n}},~ \forall~ n>0.\end{eqnarray*}  Hence  dimKer$(T-\lambda)=1$ and $T-\lambda$ is a Fredholm operator with index 1. From lemma \ref{JLY}, remark \ref{remark}, lemma \ref{B1}, we know that $T\in\mathcal{B}_1(\Omega
)$ for some open connected subset $\Omega$ of $\mathbb{C}$ and strongly irreducible.
\end{proof}

\begin{lemma}\cite{HER1}\label{HER1}
Let $A\in\mathcal{L}(\mathcal{H}_1),B\in\mathcal{L}(\mathcal{H}_2)$,  denote by $\mathcal{T}_{A,B}$ the Rosenblum operator, then

(1) the following three statements equivalent:

i) Ran$\mathcal{T}_{A,B}=\mathcal{L}(\mathcal{H}_2,\mathcal{H}_1)$;

ii) $\sigma_r(A)\cap\sigma_l(B)=\emptyset$, where $\sigma_r(A)$ and $\sigma_l(B)$ are right spectrum and left spectrum respectively;

iii) $\mathcal{K}(\mathcal{H}_2,\mathcal{H}_1)\subseteq$ Ran$\mathcal{T}_{A,B}$.

(2) If $\sigma_l(A)\cap\sigma_r(B)=\emptyset$, then Ker$\mathcal{T}_{A,B}=\{0\}$.
\end{lemma}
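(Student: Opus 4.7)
For part (1) I would establish the cycle i)$\Rightarrow$iii)$\Rightarrow$ii)$\Rightarrow$i). The first implication is trivial since $\mathcal{K}(\mathcal{H}_2,\mathcal{H}_1) \subseteq \mathcal{L}(\mathcal{H}_2,\mathcal{H}_1)$.

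For iii)$\Rightarrow$ii), I would argue by contrapositive. Suppose $\lambda_0 \in \sigma_r(A) \cap \sigma_l(B)$. Using the identity $\sigma_r(A) = \overline{\sigma_l(A^*)}$, extract unit approximate eigenvectors $y_n \in \mathcal{H}_2$ for $B$ at $\lambda_0$ and $z_n \in \mathcal{H}_1$ for $A^*$ at $\overline{\lambda_0}$, with the error $\epsilon_n := \|(B-\lambda_0)y_n\| + \|(A^*-\overline{\lambda_0})z_n\|$ prearranged to decay as quickly as desired. A direct computation yields the identity
$$\langle (AX-XB)y_n, z_n\rangle = \langle Xy_n, (A^*-\overline{\lambda_0})z_n\rangle - \langle X(B-\lambda_0)y_n, z_n\rangle,$$
hence $|\langle \mathcal{T}_{A,B}(X)y_n, z_n\rangle| \leq \|X\|\epsilon_n$ for every $X$. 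Passing to weakly convergent subsequences with limits $y_0, z_0$: if both are nonzero they are genuine eigenvectors and the rank-one compact $K = z_0 \otimes y_0$ satisfies $\langle Ky_n, z_n\rangle \to \|y_0\|^2\|z_0\|^2 > 0$, contradicting the bound; if a limit vanishes, extract an almost-orthonormal subsequence (using that weakly null unit sequences in Hilbert space admit such subsequences) and take $K = \sum_k c_k(z_{n_k}\otimes y_{n_k})$ with $(c_k)$ summable but $c_k/\epsilon_{n_k} \to \infty$ (achievable by shrinking $\epsilon_{n_k}$ in advance). Then $K$ is compact but $|\langle Ky_{n_k}, z_{n_k}\rangle| = |c_k| \gg \|X\|\epsilon_{n_k}$ for any fixed $X$, obstructing inclusion in $\mathrm{Ran}\,\mathcal{T}_{A,B}$.

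For ii)$\Rightarrow$i), I would realize $\mathcal{T}_{A,B}$ as $L_A - R_B$, the difference of the commuting left- and right-multiplication operators on the Banach space $\mathcal{L}(\mathcal{H}_2,\mathcal{H}_1)$, and verify $\sigma_r(L_A) = \sigma_r(A)$ and $\sigma_r(R_B) = \sigma_l(B)$ (the second uses that $R_B$ is surjective iff $B$ is bounded below). The disjointness of the closed sets $\sigma_r(A)$ and $\sigma_l(B)$ then permits a Rosenblum-style spectral-separation argument: enclose them by disjoint Cauchy contours and build a right inverse for $L_A - R_B$ by a contour integral of the appropriate one-sided resolvents, using the joint holomorphic functional calculus for the commuting pair $(L_A, R_B)$. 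This one-sided functional-calculus step is the main technical obstacle.

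For part (2), given $AX = XB$ with $X \neq 0$, use that $\overline{\mathrm{Ran}(X)}$ is $A$-invariant and $\mathrm{Ker}(X)$ is $B$-invariant; the restriction $A_1 := A|_{\overline{\mathrm{Ran}(X)}}$ and the operator $\tilde B_1$ induced by $B$ on $\mathcal{H}_2/\mathrm{Ker}(X)$ are intertwined by the quasi-affinity $\tilde X$ induced by $X$, so $A_1\tilde X = \tilde X\tilde B_1$. A standard spectral-inclusion theorem for quasi-affine intertwiners then produces $\lambda \in \sigma_l(A_1)\cap\sigma_r(\tilde B_1) \subseteq \sigma_l(A)\cap\sigma_r(B)$, contradicting the hypothesis and forcing $X=0$.
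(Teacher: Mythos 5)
You should note at the outset that the paper offers no proof of this lemma to compare against: it is quoted from Herrero's book \cite{HER1} (the Rosenblum/Davis--Rosenthal theory of $\mathcal{T}_{A,B}(X)=AX-XB$), so your attempt has to be measured against that standard treatment, and there it falls short at exactly the two substantive points. In part (1), i)$\Rightarrow$iii) is trivial and your contrapositive for iii)$\Rightarrow$ii) is essentially the standard argument and is sound in outline: the identity $\langle (AX-XB)y_n,z_n\rangle=\langle Xy_n,(A^*-\overline{\lambda_0})z_n\rangle-\langle X(B-\lambda_0)y_n,z_n\rangle$ is correct, and both cases (genuine eigenvectors versus a weakly null sequence) can be carried out, provided you note that $\langle Ky_{n_j},z_{n_j}\rangle$ equals $c_j$ only up to the cross terms $\sum_{k\neq j}c_k\langle y_{n_j},y_{n_k}\rangle\langle z_{n_k},z_{n_j}\rangle$, so the subsequence, the tolerances $\epsilon_{n_k}$ and the coefficients must be chosen together to make these $o(|c_j|)$ --- routine bookkeeping. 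The genuine gap is ii)$\Rightarrow$i), which is the Davis--Rosenthal theorem and is the whole content of the statement; the step you yourself flag as ``the main technical obstacle'' is not a technicality but the theorem. There is no Riesz--Dunford calculus attached to the right spectrum, and even if you could choose right inverses $R(\lambda)$ of $A-\lambda$ holomorphically along a cycle surrounding $\sigma(B)$ and avoiding $\sigma_r(A)$, the Rosenblum computation for $\frac{1}{2\pi i}\oint R(\lambda)C(\lambda-B)^{-1}\,d\lambda$ leaves the extra term $\frac{1}{2\pi i}\oint R(\lambda)C\,d\lambda$, which in the two-sided case vanishes by Cauchy's theorem because the cycle is null-homologous in the resolvent set of $A$, but here need not vanish since the enclosed region may meet $\sigma_r(A)$, where no right inverse exists. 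The known proofs are structured differently (Davis--Rosenthal's construction, or a reduction to the invertible Rosenblum case via suitable extensions/compressions making the full spectra disjoint), and some such argument must be supplied.

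Part (2) also has a genuine gap. Your reduction is correct: $\overline{\mathrm{Ran}\,X}$ is $A$-invariant, $\ker X$ is $B$-invariant, the induced $\tilde X$ is a quasiaffinity with $A_1\tilde X=\tilde X\tilde B_1$, and indeed $\sigma_l(A_1)\subseteq\sigma_l(A)$ (the approximate point spectrum passes to restrictions) and $\sigma_r(\tilde B_1)\subseteq\sigma_r(B)$ (surjectivity passes to the quotient by an invariant subspace). But the ``standard spectral-inclusion theorem for quasi-affine intertwiners'' you then invoke --- that such an intertwining forces $\sigma_l(A_1)\cap\sigma_r(\tilde B_1)\neq\emptyset$ --- is not a standard citable result; it is precisely the contrapositive of part (2) for the reduced pair, so the argument is circular. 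What a quasiaffinity gives cheaply is only $\sigma_p(\tilde B_1)\subseteq\sigma_p(A_1)$ and the dual point-spectrum inclusion; with no norm control on $\tilde X^{-1}$ nothing about approximate point or surjectivity spectra follows. A workable repair is either Herrero's direct argument, or duality with part (1): under trace duality $\mathcal{T}_{A,B}$ is the adjoint of $T\mapsto TA-BT$ on the trace class, so $\ker\mathcal{T}_{A,B}=\{0\}$ amounts to a dense-range statement for that elementary operator under the hypothesis $\sigma_r(B)\cap\sigma_l(A)=\emptyset$, which in turn needs an ideal version of the range theorem. As it stands, both the hard implication of (1) and all of (2) remain unproved.
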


\begin{lemma}\cite{JW}\label{JW}
Let $0<l_j\leq\infty, j=1,2,$ $\mathcal{H}=\oplus_{j=1}^2(\oplus_{0\leq i<l_j}\mathcal{H}_i^j)$, then $$T=\begin{bmatrix}
A&Q\\
&B\end{bmatrix}\begin{matrix}\oplus_{0\leq i<l_1}\mathcal{H}_i^1\\
\oplus_{0\leq i<l_2}\mathcal{H}_i^2\end{matrix}\in\mathcal{L}(\mathcal{H})$$ is a strongly irreducible operator when the following conditions satisfy:

i) $A=\oplus_{0\leq i<l_1}A_i,B=\oplus_{0\leq i<l_2}B_i, Q=(Q_{ij})_{ij}$, where $A_i\in\mathcal{L}(\mathcal{H}_i^1),B_i\in\mathcal{L}(\mathcal{H}_i^2)$, $Q_{ij}\in\mathcal{L}(\mathcal{H}_j^2,\mathcal{H}_i^1)$;

ii) $A_i,B_j$ are all strongly irreducible operators;

iii) Ker$\mathcal{T}_{B_j,A_i}=\{0\}$ for all $i,j$, Ker$\mathcal{T}_{A_i,A_j}=\{0\}$, Ker$\mathcal{T}_{B_i,B_j}=\{0\}$ for all $i,j,i\neq j$;

iv) If $\sigma_r(A_i)\cap\sigma_l(B_j)\neq\emptyset$, then $Q_{ij}\overline{\in}Ran\mathcal{T}_{A_i,B_j}$;

v) For any $0\leq m,n<l_1$, there exist $k_0<\infty$, $\{i_k\}_{k=1}^{k_0+1}$, $\{j_k\}_{k=1}^{k_0}$ such that $\sigma_r(A_{i_k})\cap\sigma_l(B_{j_k})\neq\emptyset$, $\sigma_r(A_{i_{k+1}})\cap\sigma_l(B_{j_k})\neq\emptyset$, $i_1=m$, $i_{k_0+1}=n$;

vi) For any $0\leq m,n<l_2$, there exist $k_1<\infty$, $\{i_k\}_{k=1}^{k_1}$, $\{j_k\}_{k=1}^{k_1+1}$ such that $\sigma_r(A_{i_k})\cap\sigma_l(B_{j_k})\neq\emptyset$, $\sigma_r(A_{i_{k}})\cap\sigma_l(B_{j_{k+1}})\neq\emptyset$, $j_1=m$, $j_{k_1+1}=n$.
\end{lemma}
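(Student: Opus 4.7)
The plan is to show that any idempotent $P$ commuting with $T$ must be $0$ or $I$. Write $P=\begin{pmatrix} P_{11} & P_{12} \\ P_{21} & P_{22} \end{pmatrix}$ with respect to the outer decomposition $\mathcal{H}_1\oplus\mathcal{H}_2$, and decompose each block further into entries $P_{\cdot\cdot}^{ij}$ indexed by the inner direct sums. Expanding $PT=TP$ produces four block-equations; the $(2,1)$-entry gives $BP_{21}=P_{21}A$, i.e.\ $\mathcal{T}_{B,A}(P_{21})=0$. Componentwise, $B_j P_{21}^{ji}=P_{21}^{ji}A_i$, and hypothesis~(iii) forces every $P_{21}^{ji}=0$; hence $P_{21}=0$.

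With $P_{21}=0$, the $(1,1)$- and $(2,2)$-block equations reduce to $[P_{11},A]=0$ and $[P_{22},B]=0$. The off-diagonal kernel conditions in~(iii) (applied to $\mathcal{T}_{A_i,A_j}$ and $\mathcal{T}_{B_i,B_j}$ for $i\neq j$) kill the off-diagonal entries of $P_{11}$ and $P_{22}$, so both are inner-block-diagonal. Since $P^2=P$, each diagonal block $P_{11}^{ii}$ and $P_{22}^{jj}$ is itself an idempotent commuting with the strongly irreducible operator $A_i$ or $B_j$, hence equals $0$ or $I$. Label these values $\epsilon_i,\eta_j\in\{0,1\}$.

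The heart of the argument is the $(1,2)$-equation $\mathcal{T}_{A,B}(P_{12})=P_{11}Q-QP_{22}$, which at the inner $(i,j)$-entry reads
\[
A_i P_{12}^{ij} - P_{12}^{ij} B_j \;=\; (\epsilon_i-\eta_j)\,Q_{ij}.
\]
If $\epsilon_i\neq\eta_j$, then $\pm Q_{ij}\in\mathrm{Ran}\,\mathcal{T}_{A_i,B_j}$, so the contrapositive of~(iv) forces $\sigma_r(A_i)\cap\sigma_l(B_j)=\emptyset$. Equivalently, every pair $(i,j)$ with intersecting spectra must satisfy $\epsilon_i=\eta_j$. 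Hypothesis~(v) supplies, for any $m,n<l_1$, a zigzag chain $i_1=m,j_1,i_2,j_2,\dots,j_{k_0},i_{k_0+1}=n$ of spectrum-intersecting pairs; walking the chain, the relation $\epsilon_{i_k}=\eta_{j_k}=\epsilon_{i_{k+1}}$ propagates and gives $\epsilon_m=\epsilon_n$. So all $\epsilon_i$ agree, and symmetrically~(vi) makes all $\eta_j$ agree; any single valid link then forces the two common values to coincide.

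Finally, writing $c\in\{0,1\}$ for the common value, the $(1,1)$- and $(2,2)$-blocks of $P$ are $cI$. When $c=0$, $P=\begin{pmatrix}0&P_{12}\\0&0\end{pmatrix}$ satisfies $P^2=0$, so $P^2=P$ forces $P=0$; when $c=1$, applying the same argument to $I-P$ gives $P=I$. The main obstacle is the combinatorial step: hypothesis~(iv) gives no information about $\epsilon_i$ versus $\eta_j$ when the spectra are disjoint, so the real work done by~(v)--(vi) is to supply enough edges in the bipartite ``spectrum-intersection'' graph to propagate the single scalar $c$ across the whole decomposition.
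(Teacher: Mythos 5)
The paper gives no proof of this lemma at all; it is imported verbatim from \cite{JW}, so there is no internal argument to compare yours against. That said, your proof is the standard one and is essentially correct: the $(2,1)$-equation plus the first part of (iii) kills $P_{21}$; the remaining kernel conditions in (iii) make $P_{11}$ and $P_{22}$ inner-block-diagonal with idempotent diagonal entries, which (ii) forces to be $0$ or $I$; the $(1,2)$-equation together with the contrapositive of (iv) yields $\epsilon_i=\eta_j$ whenever $\sigma_r(A_i)\cap\sigma_l(B_j)\neq\emptyset$; and (v)--(vi) propagate a single scalar across all indices, after which $P\in\{0,I\}$ follows from idempotency. One point deserves attention: your closing step ``any single valid link then forces the two common values to coincide'' presupposes that at least one pair $(i,j)$ with $\sigma_r(A_i)\cap\sigma_l(B_j)\neq\emptyset$ actually exists. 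Conditions (v)--(vi) guarantee this only when $l_1\geq 2$ or $l_2\geq 2$ (take $m\neq n$, which forces $k_0\geq 1$ and hence produces a genuine link); in the degenerate case $l_1=l_2=1$ with $\sigma_r(A_0)\cap\sigma_l(B_0)=\emptyset$, Lemma \ref{HER1} makes $\mathcal{T}_{A_0,B_0}$ surjective, so $T$ is similar to $A_0\oplus B_0$ and is \emph{not} strongly irreducible --- the statement as transcribed silently excludes this case, and your argument breaks down exactly where the statement does. You should either add the explicit hypothesis that some pair of spectra intersect, or note that it follows from (v)--(vi) whenever the decomposition is nontrivial. For the use made of the lemma in Case 4 of the main theorem, where there are infinitely many blocks and consecutive spectra overlap by construction, the caveat is harmless.
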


\begin{lemma}\cite{JiBoshilunwen,Ji}\label{Ji}
Let $T\in\mathcal{L}(\mathcal{H})$ be quasi-triangular, $\sigma(T),\sigma_W(T)$ are all connected and $\lambda_0\in\partial\sigma_W(T)$.  Then
for any given $\epsilon>0$, there exists a compact operator $K\in\mathcal{L}(\mathcal{H})$, $||K||<\epsilon$ such that

i) $T+K$ is strongly irreducible;

ii) If $\lambda_0\overline{\in}\sigma_p(B)$, then Ker$\mathcal{T}_{B,T+K}=\{0\}$.
\end{lemma}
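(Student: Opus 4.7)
The plan is to extend the compact-perturbation machinery behind Theorem \ref{JJWPS} so that the perturbation simultaneously enforces the intertwining condition (ii). Since $T$ is quasi-triangular with connected $\sigma(T)$ and connected $\sigma_W(T)$, the first step is to invoke an Apostol--Foias--Voiculescu style triangular representation: modulo a compact perturbation of norm less than $\epsilon/2$, $T$ is brought into an upper-triangular block form whose diagonal is a direct sum of pieces, one of which is a bilateral weighted shift $S$ whose weights (by Lemma \ref{shift}) are chosen so that $\sigma(S)$ is an annulus having $\lambda_0$ on its boundary. Under the ordering hypothesis of Lemma \ref{bilateral shift} this shift lies in $\mathcal{B}_1(\Omega)$ for a suitable annular $\Omega$ with $\lambda_0\in\partial\Omega$, hence is strongly irreducible by Lemma \ref{B1}. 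The remaining diagonal pieces are taken to be strongly irreducible as well, applying Theorem \ref{JJWPS} to each one with small extra compact error.

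With this model in place, the next step is to add a further compact perturbation $Q$ of norm less than $\epsilon/2$ into the off-diagonal corners, chosen to fulfill hypotheses (iii)--(vi) of Lemma \ref{JW}. The spectral chain conditions (v) and (vi) are secured by selecting the spectra of the diagonal pieces (including $S$) so that their right and left spectra overlap in the required chain. Condition (iv) requires each relevant $Q_{ij}$ outside the range of the Rosenblum operator $\mathcal{T}_{A_i,B_j}$; precisely when (iv) applies, i.e.\ when $\sigma_r(A_i)\cap\sigma_l(B_j)\neq\emptyset$, Lemma \ref{HER1}(1) says this range is a proper subspace of $\mathcal{L}(\mathcal{H}_j^2,\mathcal{H}_i^1)$, so an arbitrarily small correction outside it is available. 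Lemma \ref{JW} then yields strong irreducibility of $T+K$, where $K$ is the sum of the two compact perturbations, giving (i).

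For (ii), the construction is arranged so that in the block decomposition of $T+K$ the Cowen--Douglas piece (namely $S$) is the only diagonal block whose right spectrum touches $\lambda_0$, while every other diagonal block has right spectrum disjoint from $\sigma_l(B)$ under the hypothesis $\lambda_0\notin\sigma_p(B)$. Any $X\in\mathrm{Ker}\,\mathcal{T}_{B,T+K}$ intertwines $T+K$ with $B$, so its block-entries are themselves intertwiners of the corresponding diagonal pieces; Lemma \ref{HER1}(2) forces the vanishing of each entry for which the relevant spectra are disjoint, reducing the problem to the Cowen--Douglas corner. There, the holomorphic eigenvector section of $S$ on $\Omega$ sends $\mathrm{Ker}(S-\lambda)$ into $\mathrm{Ker}(B-\lambda)$, and pushing $\lambda\to\lambda_0\in\partial\Omega$ together with $\lambda_0\notin\sigma_p(B)$ forces $X$ to vanish on a total set of eigenvectors, hence $X=0$.

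The main obstacle is the simultaneous coordination: one compact perturbation $K$ must place the spectral picture of $T+K$ into Cowen--Douglas form with $\lambda_0$ as the designated boundary point, satisfy all six hypotheses of Lemma \ref{JW}, \emph{and} keep $||K||<\epsilon$. Balancing the chain conditions (v)--(vi) against the genericity condition (iv) while pinning down the boundary placement of $\lambda_0$ requires the iterative refinement of the Apostol decomposition worked out in Ji's thesis; this careful bookkeeping is the technical heart of the proof.
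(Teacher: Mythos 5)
The paper offers no proof of Lemma \ref{Ji}: it is quoted verbatim from \cite{JiBoshilunwen,Ji} and used as a black box in Case 4 of the main theorem, so the only comparison available is with Ji's work itself. Judged on its own terms, your sketch has two genuine gaps, and the second is exactly at the point where the lemma goes beyond Theorem \ref{JJWPS}. First, the model you propose for (i) cannot be produced in general. The essential spectrum, the Weyl spectrum and the Fredholm index function are invariant under compact perturbation. The hypotheses allow (and the present paper applies the lemma precisely to) an essentially self-adjoint $T$ with $\sigma(T)=\sigma_W(T)$ a real segment and $\lambda_0$ an endpoint, namely $T_{j3}$ with $\sigma_W(T_{j3})=[\eta_j^2,\eta_j^1]$, $\lambda_0=\eta_j^2$. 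For such $T$ and any compact $K$ one has $\sigma_e(T+K)=\sigma_e(T)\subseteq\mathbb{R}$, and $\rho_{s-F}^{(1)}(T+K)$ is an open set contained in $\sigma_W(T)\subseteq\mathbb{R}$, hence empty. So $T+K$ can contain no direct summand which is an invertible bilateral weighted shift with annular spectrum touching $\lambda_0$ (its essential spectrum would contribute circles that a real segment cannot contain), and no point where $T+K-\lambda$ has index $1$; your designated $\mathcal{B}_1(\Omega)$ corner with $\lambda_0\in\partial\Omega$, built from Lemmas \ref{shift} and \ref{bilateral shift}, simply does not exist in this case, so the gluing via Lemma \ref{JW} never gets started.

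Second, even where such a Cowen--Douglas corner does exist, your argument for (ii) does not close. From $BX=X(T+K)$ you get $B\,X\Gamma(\lambda)=\lambda\,X\Gamma(\lambda)$ for the eigenvector section $\Gamma$ on $\Omega$, but this forces $X\Gamma(\lambda)=0$ only for those $\lambda\notin\sigma_p(B)$; the hypothesis excludes only the single boundary point $\lambda_0\in\partial\Omega$, and $B$ may perfectly well have every $\lambda\in\Omega$ as an eigenvalue (adjoints of shifts do). ``Pushing $\lambda\to\lambda_0$'' yields nothing, since normalized eigenvectors with eigenvalues tending to $\lambda_0$ need not converge, and if they converge the limit may be $0$, so $\lambda_0\in\sigma_p(B)$ cannot be inferred. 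A mechanism that genuinely uses only $\lambda_0\notin\sigma_p(B)$ is different in kind: one must build $T+K$ so that, say, $\bigvee_{n}\mathrm{Ker}(T+K-\lambda_0)^n$ is dense (then $BX=X(T+K)$ kills generalized eigenvectors at $\lambda_0$ inductively, giving $X=0$), or verify the vanishing of all matrix entries of $X$ by explicit growth estimates on the diagonal entries, which is how the paper itself handles the analogous kernel statements in $(2.6)$--$(2.7)$ and $(2.12)$--$(2.14)$. Your final paragraph in any case defers the quantitative bookkeeping to Ji's thesis, so what you have is an outline of the cited proof with an incorrect mechanism for (ii) and a model for (i) that fails exactly in the situation this paper needs.
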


\subsection{Proof of Main Theorem}
\begin{proof}
From the polar decomposition theorem, we have $T=V|T|.$  Since $KerT=(0)$ and $\overline{RanT}=\mathcal{H}$, $V$ is  an  unitarily operator. If the theorem is correct for positive operator $|T|$, i.e. there exist unitarily operator $U$ and compact operator $K$, $||K||<\epsilon$ such that $(U+K)|T|$ is strongly irreducible, then $(UV^*+KV^*)T=(U+K)|T|$ is strongly irreducible and the norm of compact operator $KV^*$ is less than $\epsilon$. Hence it suffices to prove the theorem when T is a positive operator. We will break it into four cases.

Case 1. Let $T$ be an invertible operator. Since $\sigma_e(T)\neq\emptyset$, we denote $\lambda_{min}=min\{\lambda;\lambda\in\sigma_e(T)\},\lambda_{min}=min\{\lambda;\lambda\in\sigma_e(T)\}$. From the Weyl-Von Neumann theorem, for $\epsilon>0$, there exists a compact operator $K_1$, $||K_1||<\frac{\epsilon}{2||T^{-1}||}$ such that,

1) $T+K_1$ is a diagonal operator with entries $\{\lambda_1,\lambda_2,\ldots\}$ under an ONB $\{e_n\}_{n=1}^\infty$;

2) there exist two subsequences $\{\alpha_n\}_{n=1}^\infty$ and $\{\beta_n\}_{n=1}^\infty$ of $\{\lambda_n\}_{n=1}^\infty$ which satisfy \begin{eqnarray*}&&\lim\limits_{n\rightarrow\infty}\alpha_n=\lambda_{min},~\alpha_n<\alpha_{n+1},\\ &&\lim\limits_{n\rightarrow\infty}\beta_n=\lambda_{max},~\beta_n>\beta_{n+1};\end{eqnarray*}

3) $\{\lambda_1,\lambda_2,\ldots\}\backslash\{\{\alpha_1,\alpha_2,\ldots\}\bigcup\{\beta_1,\beta_2,\ldots\}\}\subseteq[\lambda_{min},\lambda_{max}]$.

Denote the elements of set $\{\lambda_1,\lambda_2,\ldots\}\backslash\{\{\alpha_1,\alpha_2,\ldots\}\bigcup\{\beta_1,\beta_2,\ldots\}\}$ by $\{\gamma_n\}_{n=1}^\infty$. Then we have

$$T+K_1=
  \begin{bmatrix}
      \begin{bmatrix}
      \ddots &  &  &  &  &  \\
     & \beta_2&  \\
     & & \beta_1& \\
     &  &  & \alpha_1 &  \\
     &  &  &  & \alpha_2 & \\
     &  &  &  &  & \ddots \\
      \end{bmatrix}
     & \\
     & \begin{bmatrix}
           \gamma_1 \\
            &\gamma_2 \\
            &&\ddots \\
         \end{bmatrix}\extendEql{\mbox{def}}D
      \\
  \end{bmatrix}
\begin{matrix}
  \vdots \\
  f_4 \\
  f_2 \\
  f_1 \\
  f_5 \\
  \vdots \\
  f_3 \\
  f_7 \\
  \vdots\\
\end{matrix}
,
$$ where $\{f_n\}_{n=1}^\infty$ is a rearrangement of ONB $\{e_n\}_{n=1}^\infty$.

Let $$U= \begin{bmatrix}
           \ddots \\
            \ddots& 0 &  \\
            & 1& 0 &  \\
            & & 1& 0 &  \\
            & & & 1& 0 &  \\
            & & & & \ddots& \ddots &  \\
        \end{bmatrix}
\begin{matrix}
  \vdots \\
  f_4 \\
  f_2 \\
  f_1 \\
  f_5 \\
  \vdots \\
\end{matrix}
~{\rm and}~U_1=\begin{bmatrix}
          U &  \\
           & I \\
        \end{bmatrix}\begin{matrix}
        \mathcal{H}_1\\
        \mathcal{H}_2
        \end{matrix},
$$ where $\mathcal{H}_1=\bigvee\{f_{2n+2},f_{4n+1},n=0,1,2,\ldots\}$, $\mathcal{H}_2=\bigvee\{f_{4n+3},n=0,1,2,\ldots\}$,
then we have $$U_1(T+K_1)=
  \begin{bmatrix}
      \begin{bmatrix}
    \ddots   \\
      \ddots& 0 \\
     & \beta_2 &0&\\
     &  &  \beta_1&0\\
     &  &  &\alpha_1&0 \\
     &&&&\ddots&\ddots\\
      \end{bmatrix}\extendEql{\mbox{def}}B
     &  \\
     & D
      \\
  \end{bmatrix}
\begin{matrix}
  \vdots \\
  f_4 \\
  f_2 \\
  f_1 \\
  f_5 \\
  \vdots \\
\mathcal{H}_2
\end{matrix}.$$

It is easy to compute that \begin{eqnarray*}&&r(B)=\lim\limits_{n\rightarrow\infty}(\beta_1\beta_2\cdots\beta_n)^{\frac{1}{n}}=\lim\limits_{n\rightarrow\infty}\beta_n=\lambda_{max},\\
&&r(B^{-1})^{-1}=\lim\limits_{n\rightarrow\infty}((\alpha_1^{-1}\alpha_2^{-1}\cdots\alpha_n^{-1})^{\frac{1}{n}})^{-1}=\lim\limits_{n\rightarrow\infty}\alpha_n=\lambda_{min}.\end{eqnarray*}

From lemma \ref{shift}, the spectrum of $B$ is the annulus $\{\lambda\in\mathbb{C};\lambda_{min}\leq|\lambda|\leq\lambda_{max}\}$. Moreover, the diagonal operator $D$ is contained in this annulus. Hence the spectrum $\sigma(U_1(T+K_1))$ is connected. It follows from the theorem \ref{JJWPS} that there exists a compact operator $K_2$, $||K_2||<\frac{\epsilon}{2||T^{-1}||}$ such that $U_1(T+K_1)+K_2$ is strongly irreducible. Notice that $\{U_1+(U_1K_1T^{-1}+K_2T^{-1})\}T=U_1(T+K_1)+K_2$ and $U_1K_1T^{-1}+K_2T^{-1}$ is a compact operator with $||U_1K_1T^{-1}+K_2T^{-1}||<\epsilon$, we complete the proof of this case.

Case 2. Let $T$ be an operator with $\sigma_e(T)=\{0\}$. Since $KerT=(0)$ and $\overline{RanT}=\mathcal{H}$, $T$ must have the matrix representation as follows, $$T=\begin{bmatrix}
                 \lambda_0 & \\
                  &\lambda_1 \\
                  && \lambda_{-1}\\
                  &&&\lambda_2\\
                &&&&\lambda_{-2}\\
                &&&&&\ddots \\
               \end{bmatrix}
             \begin{matrix}
                      e_0 \\
                      e_1 \\
                      e_2 \\
                     e_3 \\
                      e_4 \\
                      \vdots
                    \end{matrix},
$$ where $\lambda_n\neq0$, $\lim\limits_{n\rightarrow +\infty}\lambda_n=\lim\limits_{n\rightarrow-\infty}\lambda_n=0.$  As case 1, we can find an unitarily operator $U$ such that $$UT=
               \begin{bmatrix}
                 \ddots\\
                 \ddots&0\\
                &\lambda_{-1}&0\\
                &&\lambda_0 &0\\
                &&&\lambda_1&0\\
                &&&&\ddots&\ddots\\
               \end{bmatrix}
             \begin{matrix}
                      \vdots \\
                      e_2 \\
                      e_0 \\
                      e_1 \\
                      e_3 \\
                      \vdots
                   \end{matrix}.$$

We will show that $UT$ is strongly irreducible by directly computation. Let $$P=\begin{bmatrix}
                 \ddots&\vdots&\vdots&\vdots&\vdots&

{\mathinner{\mkern2mu\raise1pt\hbox{.}\mkern2mu
\raise4pt\hbox{.}\mkern2mu\raise7pt\hbox{.}\mkern1mu}}
 \\
                 \cdots&a_{-1,-1}&a_{-1,0}&a_{-1,1}&a_{-1,2}&\cdots\\
                \cdots&a_{0,-1}&a_{0,0}&a_{0,1}&a_{0,2}&\cdots\\
                \cdots&a_{1,-1}&a_{1,0} &a_{1,1}&a_{1,2}&\cdots\\
                \cdots&a_{2,-1}&a_{2,0}&a_{2,1}&a_{2,2}&\cdots\\

{\mathinner{\mkern2mu\raise1pt\hbox{.}\mkern2mu
\raise4pt\hbox{.}\mkern2mu\raise7pt\hbox{.}\mkern1mu}}

&\vdots&\vdots&\vdots&\vdots&\ddots\\
              \end{bmatrix}
             \begin{matrix}
                      \vdots \\
                      e_2 \\
                      e_0 \\
                      e_1 \\
                      e_3 \\
                      \vdots
                   \end{matrix}$$ be an operator commuting with $UT$. Then we have \begin{eqnarray}&&\lambda_ia_{i,j}=\lambda_ja_{i+1,j+1},~\forall i,j\in\mathbb{Z}.\end{eqnarray}

First, $a_{i,j}=0$ when $i\neq j$. If $k>0$, then \begin{eqnarray} &&a_{i,i+k}=\dfrac{\lambda_{k-1}\lambda_{k-2}\cdots\lambda_0}{\lambda_{i+k-1}\lambda_{i+k-2}\cdots\lambda_i}a_{0,k},~ \forall~ i\geq k,\\
&&a_{i,i+k}=\dfrac{\lambda_{i+k}\lambda_{i+k+1}\cdots\lambda_{k-1}}{\lambda_i\lambda_{i+1}\cdots\lambda_{-1}}a_{0,k},~ \forall~ i\leq -1.\end{eqnarray}

If $k<0$, then
\begin{eqnarray}&&a_{i,i+k}=\dfrac{\lambda_{k+1}\lambda_{k+2}\cdots\lambda_0}{\lambda_{i+k-1}\lambda_{i+k}\cdots\lambda_{i-2}}a_{0,k},~ \forall~ i\leq k+2,\\
&&a_{i,i+k}=\dfrac{\lambda_{i-1}\lambda_{i-2}\cdots\lambda_0}{\lambda_{i+k-1}\lambda_{i+k-2}\cdots\lambda_k}a_{0,k},~ \forall~ i\geq 1.\end{eqnarray}

From $(2.2),(2.4)$, we know that $a_{0,k}=0$ for all $k\in\mathbb{Z}$. Hence $a_{i,j}=0$ when $i\neq j$.

Second, $a_{i,i}=a_{j,j}$ for all $i,j\in\mathbb{Z}$. It is easily seen from $(2.1)$. In summary, $P=I$ or $P=0$.

Case 3. Let $T$ be an operator which satisfy that $0$ is an isolate point of $\sigma_e(T)$ and $\sigma_e(T)\backslash\{0\}\neq\emptyset$.

From the spectral decomposition theorem of self-adjoint operators, we have $$T=\begin{bmatrix}
                                                                       T_1 &  \\
                                                                        & T_2 \\
                                                                     \end{bmatrix}
                                                                   \begin{matrix}
                                                                            \mathcal{H}_1 \\
                                                                            \mathcal{H}_2
                                                                          \end{matrix},
$$ where $\sigma_e(T_1)=\{0\}$ and $0\overline{\in}\sigma(T_2)$, $\mathcal{H}_1 \bot\mathcal{H}_2$ and $\mathcal{H}_1\oplus\mathcal{H}_2=\mathcal{H}$.

Notice that dim$\mathcal{H}_2=\infty$.

As case 2, since $KerT=(0)$ and $\overline{RanT}=\mathcal{H}$, $T_1$ must be a compact operator and $$T_1=
        \begin{bmatrix}
            \lambda_0\\
            & \lambda_1 \\
           &  & \lambda_2  \\
           &&&\ddots
        \end{bmatrix}
      \begin{matrix}
               e_0 \\
               e_1 \\
              e_2 \\
               \vdots
             \end{matrix}
,$$
where $\{e_n\}_{n=0}^\infty$ is an ONB of $\mathcal{H}_1$, $\lambda_n\neq0$, $\lim\limits_{n\rightarrow\infty}\lambda_n=0.$

Since $T_2$ is an invertible self-adjoint operator,  there exist a compact operator $K$ with $||K||<\frac{\epsilon}{||T_2^{-1}||}$, such that $T_2+K$ is also invertible, and $$T_2+K=
        \begin{bmatrix}
             \eta_1 \\
             & \eta_2  \\
             &  & \eta_3 \\
           &&&\ddots
        \end{bmatrix}
      \begin{matrix}
               f_1 \\
              f_2 \\
               f_3 \\
               \vdots
            \end{matrix},$$  where $\{f_n\}_{n=1}^\infty$ is an ONB of $\mathcal{H}_2$.

Let $$ K_1=        \begin{bmatrix}
                              0 &  \\
                               & K \\
                            \end{bmatrix}
                            \begin{matrix}
                            \mathcal{H}_1 \\
                             \mathcal{H}_2
                             \end{matrix}.$$

Choose $U$ as case 1 such that $$U(T+K_1)=\begin{bmatrix}
           \ddots \\
             \ddots& 0 \\
             &\eta_1 & 0 \\
             && \lambda_0& 0 \\
            &&&\lambda_1& 0 & \\
          &&&& \ddots&\ddots\\
\end{bmatrix}
  \begin{matrix}
  \vdots \\
  f_1 \\
  e_0 \\
  e_1 \\
  e_2\\
  \vdots \\
\end{matrix}.$$

From lemma \ref{bilateral shift}, we know that $U(T+K_1)$ is strongly irreducible.

Let $K_2=UK_1T^{-1},
$ then $||K_2||<\epsilon$ and $(U+K_2)T=U(T+K_1)$ is strongly irreducible. So we complete the proof of this case.

Case 4. This case is a little harder. Let $T$ be an operator which satisfy that $0$ is not an isolate point of $\sigma_e(T)$, i.e. there exists a sequence $\lambda_n$ such that $\{\lambda_n\}_{n=1}^\infty\subseteq\sigma_e(T)$, $\lambda_n>\lambda_{n+1}$ and $\lim\limits_{n\rightarrow\infty}\lambda_n=0.$

From the spectral decomposition theorem of self-adjoint operators, we have $$T=\begin{bmatrix}
                                     T_1  \\
                                     &T_2 \\
                                   &&T_3\\
                                     &&&\ddots \\
                                  \end{bmatrix}
                                 \begin{matrix}
                                          \mathcal{H}_1 \\
                                          \mathcal{H}_2 \\
                                          \mathcal{H}_3 \\
                                          \vdots
                                    \end{matrix},
$$ where $\{\mathcal{H}_j\}_{j\geq 1}$ is a pairwise orthogonal family of subspaces and $\bigoplus_{j\geq1}\mathcal{H}_j=\mathcal{H}$, $\sigma(T_j)\subseteq[\lambda_{j+1},\lambda_j]$, $\lambda_{j+1},\lambda_j\in\sigma_e(T_j)$ for all $j\geq 1$.

First step, we will find unitarily operators and compact operators $U_j$, $K_j$, $||K_j||<\frac{\epsilon}{2j}$ such that each of $(U_j+K_j)T_j$ is strongly irreducible.

1) If there exists a point $\lambda_{j+1}<\alpha_j<\lambda_j$ such that $\alpha_j-T_j$ is invertible, then we deal with it as follows. Let $[\alpha_j-\delta_j,\alpha_j+\delta_j]$ be the small interval contained in $(\lambda_{j+1},\lambda_j)$ such that $[\alpha_j-\delta_j,\alpha_j+\delta_j]\subseteq\rho(T_j)$. From the Weyl-Von Neumann theorem, there exists a compact operator $K_j$, $||K_j||<\frac{\epsilon}{2j||T_j^{-1}||}$ such that $$T_j+K_j=                                                   \begin{bmatrix}
                                                                                                                     \ddots \\
                                                                                                                     &&\beta_2^j \\
                                                                                                                    &&&\beta_0^j \\
                                                                                                                     &&&&\beta_1^j\\
                                                                                                                     &&&&&&\ddots\\
                                                                                                                   \end{bmatrix}
                                                                                                                \begin{matrix}
                                                                                                                  \vdots \\
                                                                                                                     e_2^j \\
                                                                                                                        e_0^j \\
                                                                                                                         e_1^j \\
                                                                                                                         \vdots
                                                                                                                       \end{matrix},
$$ where $\{\beta_{2n}^j\}\subseteq(\alpha_j+\delta_j,\lambda_j],\{\beta_{2n+1}^j\}\subseteq[\lambda_{j+1},\alpha_j-\delta_j)$ and $\bigvee_{i\geq0}\{e_i^j\}=\mathcal{H}_j$.

Let $U_j\in\mathcal{L}(\mathcal{H}_j)$ as case 1 such that $$U_j(T_j+K_j)=\begin{bmatrix}
                                              \ddots &  \\
                                             \ddots &0&\\
                                             &\beta_2^j&0& \\
                                             && \beta_0^j&0\\
                                             &&&\beta_1^j&0\\
                                             &&&&\ddots&\ddots\\
                                          \end{bmatrix}
                   \begin{matrix}
                      \vdots \\
                      e_2^j \\
                      e_0^j \\
                      e_1^j \\
                      e_3^j \\
                      \vdots
                     \end{matrix}\extendEql{\mbox{def}}B_j.
$$

Then from lemma \ref{bilateral shift}, $B_j$ is a bilateral shift operator which is strongly irreducible. Denote $\widetilde{K_j}=U_jK_jT_j^{-1}$, then $||\widetilde{K_j}||<\frac{\epsilon}{2j}$ and $(U_j+\widetilde{K_j})T_j=B_j$ is strongly irreducible.

2) If $[\lambda_{j+1},\lambda_j]\subseteq\sigma(T_j)$, then we deal with it as follows. Choose an interval $[\eta_{j}^2,\eta_j^1]\subseteq(\lambda_{j+1},\lambda_j)$ arbitrarily. Then $$T_j=  \begin{bmatrix}
                                                                                    T_{j1} &  \\
                                                                                     & T_{j2} \\
                                                                                     &&T_{j3} \\
                                                                                  \end{bmatrix}
                                                                                \begin{matrix}
                                                                                         \mathcal{H}_j^1 \\
                                                                                         \mathcal{H}_j^2 \\
                                                                                         \mathcal{H}_j^3
                                                                                       \end{matrix},
$$ where $\{\mathcal{H}_j^i\}_{i=1}^3$ is pairwise orthogonal family of subspaces of $\mathcal{H}_j$ and $\bigoplus_{i=1}^3\mathcal{H}_j^i=\mathcal{H}_j$, $\sigma(T_{j1})=[\eta_j^1,\lambda_j]$, $\sigma(T_{j2})=[\lambda_{j+1},\eta_{j}^2]$, $\sigma(T_{j3})=[\eta_{j}^2,\eta_j^1]$. From the Weyl-Von Neumann theorem, there exist compact operators $C_{j1}\in\mathcal{L}(\mathcal{H}_j^1)$, $C_{j2}\in\mathcal{L}(\mathcal{H}_j^2)$, $||C_{j1}||<\frac{\epsilon}{4j||T_j^{-1}||},||C_{j2}||<\frac{\epsilon}{4j||T_j^{-1}||}$ such that, $T_{j1}+C_{j1}$ is a diagonal operator with entries $\{\gamma_{n}^j\}_{n\leq0}\subseteq[\eta_j^1,\lambda_j]$, $T_{j2}+C_{j2}$ is a diagonal operator with entries $\{\gamma_{n+1}^j\}_{n\geq0}\subseteq[\lambda_{j+1},\eta_{j}^2]$, Card$\{n;\gamma_{n+1}^j=\eta_{j}^2\}=\infty$ and \begin{eqnarray}&&\dfrac{\gamma_0^j\gamma_1^j\cdots\gamma_{n-1}^j}{(\eta_{j}^2)^n}\geq\dfrac{1}{\sqrt{n}},~\forall n\geq1.\end{eqnarray}

As case 1, denote $V_j\in\mathcal{L}(\mathcal{H}_j^1\oplus\mathcal{H}_j^2)$ be the unitarily operator such that
$$V_j\begin{bmatrix}
         T_{j1}+C_{j1} &  \\
          & T_{j2}+C_{j2} \\
       \end{bmatrix}
     =\begin{bmatrix}
          \ddots &  \\
           \ddots& 0 \\
          & \gamma_{-1}^j& 0\\
          && \gamma_0^j& 0  \\
          &&& \gamma_1^j& 0  \\
          &&&& \ddots & \ddots \\
        \end{bmatrix}\begin{matrix}\vdots\\
        e_{-1}^j\\
        e_{0}^j\\
        e_1^j\\
        e_2^j\\
        \vdots
        \end{matrix}\extendEql{\mbox{def}}D_j,
$$ where $\bigvee_{i\in\mathbb{Z}}\{e_i^j\}=\mathcal{H}_j^1\oplus\mathcal{H}_j^2$.

From lemma \ref{bilateral shift}, $D_j$ is strongly irreducible and moreover $\eta_{j+1}\overline{\in}\sigma_p(D_j)$. If $(D_j-\eta_{j}^2)(x)=0$, $x=\Sigma_{i\in\mathbb{Z}}x_ie_i^j\in\mathcal{H}_j^1\oplus\mathcal{H}_j^2$, then for $n\geq0$ \begin{eqnarray}&&x_{n+1}=\dfrac{\gamma_0^j\gamma_1^j\cdots\gamma_{n}^j}{(\eta_{j}^2)^{n+1}}x_0,~
x_{-n}=\dfrac{(\eta_{j}^2)^{n}}{\gamma_{-1}^j\gamma_{-2}^j\cdots\gamma_{-n}^j}x_0.
\end{eqnarray}
From $(2.6)$ and $(2.7)$, $x_n=0,~\forall n\in\mathbb{Z}$.

Since $T_{j3}$ is a quasi-triangular operator, $\sigma(T_{j3})=\sigma_W(T_{j3})=[\eta_{j}^2,\eta_j^1]$ and $\eta_{j}^2\in\partial\sigma_W(T_{j3})$, from lemma \ref{Ji}, there exists a compact operator $C_{j3}$, $||C_{j3}||<\frac{\epsilon}{8j||T_j^{-1}||}$ such that $T_{j3}+C_{j3}$ is strongly irreducible and Ker$(\mathcal{T}_{D_j,T_{j3}+C_{j3}})=\{0\}$. Moreover, since $\sigma_r(T_{j3}+C_{j3})\cap\sigma_l(D_j)\neq\emptyset$, from lemma \ref{HER1}, there exists a compact operator $E_j:  \mathcal{H}_j^1\oplus\mathcal{H}_j^2\longrightarrow \mathcal{H}_j^3$, $||E_j||<\frac{\epsilon}{8j||T_j^{-1}||}$ such that $E_j\overline{\in}Ran\mathcal{T}_{T_{j3}+C_{j3},D_j}$.

Hence from lemma \ref{JW}, we obtain $$S_j\extendEql{\mbox{def}}\begin{bmatrix}
                                              D_j &  \\
                                              E_j& T_{j3}+C_{j3} \\
                                            \end{bmatrix}
                                          \begin{matrix}
                                 \mathcal{H}_j^1\oplus\mathcal{H}_j^2 \\
                                 \mathcal{H}_j^3
                               \end{matrix}
$$ is strongly irreducible.

 Let $$W_j=      \begin{bmatrix}
                            V_j &  \\
                             & I \\
                          \end{bmatrix}
                        \begin{matrix}
                                 \mathcal{H}_j^1\oplus\mathcal{H}_j^2 \\
                                 \mathcal{H}_j^3
                               \end{matrix},
$$ and $$C_j=W_j\begin{bmatrix}
  C_{j1}\\
  &C_{j2}\\
   &&0 \\
  \end{bmatrix}
T_j^{-1}+ \begin{bmatrix}
                     0 & 0 \\
                     E_j & C_{j3} \\
                   \end{bmatrix}
                   T_j^{-1},$$
then $C_j$ is a compact operator with $||C_j||<\frac{\epsilon}{2j}$ and $(W_j+C_j)T_j=S_j$ is strongly irreducible.

In summary, we obtain $(U_j+\widetilde{K_j})T_j=B_j$ or $(W_j+C_j)T_j=S_j$ for any chosen $T_j$. For the sake of brevity, we denote $U_j$ or $W_j$ by $\overline{U}_j$, denote $\widetilde{K_j}$ or $C_j$ by $\overline{K}_j$, denote $B_j$ or $S_j$ by $\overline{B}_j$.

Let $U=\bigoplus_{j\geq1}\overline{U}_j$, $K=\bigoplus_{j\geq1}\overline{K}_j$, then $U$ is an unitarily operator, $K$ is a compact operator with $||K||<\frac{\epsilon}{2}$ and $$(U+K)T= \begin{bmatrix}
                                            \overline{B}_1\\
                                             & \overline{B}_2 \\
                                            & & \overline{B}_3  \\
                                            && & \ddots \\
                                          \end{bmatrix}
                                        \begin{matrix}
                                          \mathcal{H}_1 \\
                                          \mathcal{H}_2 \\
                                          \mathcal{H}_3 \\
                                          \vdots
                                        \end{matrix}.
$$

Second step, we will use lemma \ref{JW} to complete the proof. Since $\sigma_r(\overline{B}_{2n+1})\cap\sigma_l(\overline{B}_{2n})\neq\emptyset$, $\sigma_r(\overline{B}_{2n+1})\cap\sigma_l(\overline{B}_{2n+2})\neq\emptyset$, there exist compacts operator $E_n$, $F_n$, $||E_n||<\frac{\epsilon}{4n||T_{2n}^{-1}||}$, $||F_n||<\frac{\epsilon}{4n||T_{2n}^{-1}||}$ such that $E_n\overline{\in}$ Ran$\mathcal{T}_{\overline{B}_{2n-1},\overline{B}_{2n}}$ and
$F_n\overline{\in}$
Ran$\mathcal{T}_{\overline{B}_{2n+1},\overline{B}_{2n}}$.

Let  \begin{eqnarray*}\overline{K}&=&K+\begin{bmatrix}
0&\begin{bmatrix}E_1T_2^{-1}\\
 F_1T_2^{-1}&E_2T_4^{-1}\\
 &F_2T_4^{-1}&E_3T_6^{-1}\\
 &&\ddots&\ddots\\
\end{bmatrix}\\
&0
\end{bmatrix}
\begin{matrix}\mathcal{H}_1 \\
 \mathcal{H}_3 \\
 \mathcal{H}_5 \\
 \vdots\\
 \bigoplus_{n\geq1}\mathcal{H}_{2n} \\
\end{matrix}\\
&=&\begin{bmatrix}
\begin{bmatrix}
 \overline{K}_1\\
 & \overline{K}_3\\
 && \overline{K}_5\\
 &&& \ddots\\
 \end{bmatrix}&\begin{bmatrix}E_1T_2^{-1}\\
 F_1T_2^{-1}&E_2T_4^{-1}\\
 &F_2T_4^{-1}&E_3T_6^{-1}\\
 &&\ddots&\ddots\\
\end{bmatrix}\\
&\begin{bmatrix}
 \overline{K}_2\\
 & \overline{K}_4\\
 && \overline{K}_6\\
 &&& \ddots\\
 \end{bmatrix}
\end{bmatrix}
\begin{matrix}\mathcal{H}_1 \\
 \mathcal{H}_3 \\
 \mathcal{H}_5 \\
 \vdots\\
 \mathcal{H}_2 \\
 \mathcal{H}_4 \\
 \mathcal{H}_6 \\
 \vdots\end{matrix}, \end{eqnarray*}
 then $\overline{K}$ is a compact operator with $||\overline{K}||<\epsilon$ and
 \begin{eqnarray*}&&(U+\overline{K})T\\
 &=&\begin{bmatrix}
\begin{bmatrix}\begin{smallmatrix}
 \overline{U}_1+\overline{K}_1\\
 & \overline{U}_3+ \overline{K}_3\\
 &&  \overline{U}_5+\overline{K}_5\\
 &&& \ddots\\
 \end{smallmatrix}
 \end{bmatrix}&\begin{bmatrix}\begin{smallmatrix}E_1T_2^{-1}\\
 F_1T_2^{-1}&E_2T_4^{-1}\\
 &F_2T_4^{-1}&E_3T_6^{-1}\\
 &&\ddots&\ddots\\
 \end{smallmatrix}
\end{bmatrix}\\
&\begin{bmatrix}\begin{smallmatrix}
  \overline{U}_2+\overline{K}_2\\
 &  \overline{U}_4+\overline{K}_4\\
 &&  \overline{U}_6+\overline{K}_6\\
 &&& \ddots\\
\end{smallmatrix}\end{bmatrix}
\end{bmatrix}
\begin{matrix}\begin{smallmatrix}\mathcal{H}_1 \\
 \mathcal{H}_3 \\
 \mathcal{H}_5 \\
 \vdots\\
 \mathcal{H}_2 \\
 \mathcal{H}_4 \\
 \mathcal{H}_6 \\
 \vdots\end{smallmatrix}\end{matrix}\\
 &\begin{array}{c}
\times
\end{array}&
\begin{bmatrix}
\begin{bmatrix}
 T_1\\
 & T_3\\
 &&  T_5\\
 &&& \ddots\\
 \end{bmatrix}&0\\
0&\begin{bmatrix}
T_2\\
 & T_4\\
 &&T_6\\
 &&& \ddots\\
\end{bmatrix}
\end{bmatrix}
\begin{matrix}\mathcal{H}_1 \\
 \mathcal{H}_3 \\
 \mathcal{H}_5 \\
 \vdots\\
 \mathcal{H}_2 \\
 \mathcal{H}_4 \\
 \mathcal{H}_6 \\
 \vdots\end{matrix}\\
 &=&\begin{bmatrix}
\begin{bmatrix}
 \overline{B}_1\\
 & \overline{B}_3\\
 && \overline{B}_5\\
 &&& \ddots\\
 \end{bmatrix}&\begin{bmatrix}E_1\\
 F_1&E_2\\
 &F_2&E_3\\
 &&\ddots&\ddots\\
\end{bmatrix}\\
&\begin{bmatrix}
 \overline{B}_2\\
 & \overline{B}_4\\
 && \overline{B}_6\\
 &&& \ddots\\
 \end{bmatrix}
\end{bmatrix}
\begin{matrix}\mathcal{H}_1 \\
 \mathcal{H}_3 \\
 \mathcal{H}_5 \\
 \vdots\\
 \mathcal{H}_2 \\
 \mathcal{H}_4 \\
 \mathcal{H}_6 \\
 \vdots\end{matrix}.
\end{eqnarray*}

In order to obtain the result, we only need to show the conditions $i)-vi)$ of lemma \ref{JW}.

From the construction above, conditions $i),ii),iv),v),vi)$ are satisfied. Since $\sigma(\overline{B}_{i})\cap\sigma(\overline{B}_{j})=\emptyset$, when $|i-j|\geq2$, from lemma \ref{HER1}, Ker$\mathcal{T}_{\overline{B}_{i},\overline{B}_{j}}=\{0\}$ for $i\neq j$, $|i-j|\geq2$.

For $j$ even, $|i-j|=1$, we should consider Ker$\mathcal{T}_{\overline{B}_{j},\overline{B}_{i}}=\{0\}$.

Since each $\overline{B}_{j}$ maybe $B_j$ or $S_j$, we should consider four cases for any Ker$\mathcal{T}_{\ast,\ast}$. But the proof are all the same, we just consider one case that Ker$\mathcal{T}_{\overline{B}_{j},\overline{B}_{j+1}}=\{0\}$ where
\begin{eqnarray*}&\overline{B}_{j}=S_j=\begin{bmatrix}       D_j &  \\
                                              E_j& T_{j3}+C_{j3} \\
                                            \end{bmatrix}
                                          \begin{matrix}
                                 \mathcal{H}_j^1\oplus\mathcal{H}_j^2 \\
                                 \mathcal{H}_j^3
                               \end{matrix},\\
&\overline{B}_{j+1}=S_{j+1}=\begin{bmatrix}   D_{j+1} &  \\
                                              E_{j+1}& T_{j+1,3}+C_{j+1,3} \\
                                            \end{bmatrix}
                                          \begin{matrix}
                                 \mathcal{H}_{j+1}^1\oplus\mathcal{H}_{j+1}^2 \\
                                 \mathcal{H}_{j+1}^3
                               \end{matrix}.\end{eqnarray*}

Let $$X=\begin{bmatrix}X_{11}&X_{12}\\
X_{21}&X_{22}\end{bmatrix}\begin{matrix}
                                 \mathcal{H}_{j+1}^1\oplus\mathcal{H}_{j+1}^2 \\
                                 \oplus\\
                                 \mathcal{H}_{j+1}^3
                               \end{matrix}\mapsto\begin{matrix}
                                 \mathcal{H}_j^1\oplus\mathcal{H}_j^2 \\
                                 \oplus\\
                                 \mathcal{H}_j^3
                               \end{matrix}$$
belongs to Ker$\mathcal{T}_{\overline{B}_{j},\overline{B}_{j+1}}$. Then \begin{eqnarray}
&&D_jX_{11}=X_{11}D_{j+1}+X_{12}E_{j+1},\\
&&D_jX_{12}=X_{12}(T_{j+1,3}+C_{j+1,3}),\\
&&E_jX_{11}+(T_{j3}+C_{j3})X_{21}=X_{21}D_{j+1}+X_{22}E_{j+1},\\
&&E_jX_{12}+(T_{j3}+C_{j3})X_{22}=X_{22}(T_{j+1,3}+C_{j+1,3}).\end{eqnarray}

Since $\sigma(D_j)\cap\sigma(T_{j+1,3}+C_{j+1,3})=\emptyset$, from lemma \ref{HER1} and $(2.9)$, $X_{12}=0$. Since $\sigma(T_{j3}+C_{j3})\cap\sigma(T_{j+1,3}+C_{j+1,3})=\emptyset$, from lemma \ref{HER1}, $X_{12}=0$ and $(2.11)$, $X_{22}=0$.

If $X_{11}=0$, then since $\sigma(T_{j3}+C_{j3})\cap\sigma(D_{j+1})=\emptyset$, $X_{21}=0$, and hence $X=0$.

Let $$X_{11}=\begin{bmatrix}                 \ddots&\vdots&\vdots&\vdots&\vdots&

{\mathinner{\mkern2mu\raise1pt\hbox{.}\mkern2mu
\raise4pt\hbox{.}\mkern2mu\raise7pt\hbox{.}\mkern1mu}}
 \\
                 \cdots&a_{-1,-1}&a_{-1,0}&a_{-1,1}&a_{-1,2}&\cdots\\
                \cdots&a_{0,-1}&a_{0,0}&a_{0,1}&a_{0,2}&\cdots\\
                \cdots&a_{1,-1}&a_{1,0} &a_{1,1}&a_{1,2}&\cdots\\
                \cdots&a_{2,-1}&a_{2,0}&a_{2,1}&a_{2,2}&\cdots\\

{\mathinner{\mkern2mu\raise1pt\hbox{.}\mkern2mu
\raise4pt\hbox{.}\mkern2mu\raise7pt\hbox{.}\mkern1mu}}

&\vdots&\vdots&\vdots&\vdots&\ddots\\
                                          \end{bmatrix}
                   \begin{matrix}
                      \vdots \\
                      e_{-1}^{j+1} \\
                      e_0^{j+1} \\
                      e_1^{j+1} \\
                      e_2^{j+1} \\
                      \vdots
                     \end{matrix}\mapsto\begin{matrix}
                      \vdots \\
                      e_{-1}^j \\
                      e_0^j \\
                      e_1^j \\
                      e_2^j \\
                      \vdots
                     \end{matrix},$$ then from $(2.8)$, we have \begin{eqnarray}&&
                     \gamma_m^ja_{m,n}=\gamma_n^{j+1}a_{m+1,n+1},~\forall m,n\in\mathbb{Z}.
                     \end{eqnarray}

Hence \begin{eqnarray}&&a_{m+1,m+k+1}=\dfrac{\gamma_0^j\gamma_1^j\cdots\gamma_m^j}{\gamma_k^{j+1}\gamma_{k+1}^{j+1}\cdots\gamma_{m+k}^{j+1}}a_{0,k},~\forall m\geq0,~k\in\mathbb{Z},\\
&&a_{m,m+k}=\dfrac{\gamma_{m+k}^{j+1}\gamma_{m+k+1}^{j+1}\cdots\gamma_{k-1}^{j+1}}{\gamma_m^j\gamma_{m+1}^j\cdots\gamma_{-1}^j}a_{0,k},~\forall m<0,~k\in\mathbb{Z}.\end{eqnarray}

Since $$\dfrac{\gamma_n^j}{\gamma_m^{j+1}}\geq\dfrac{\lambda_{j+1}}{\eta_{j+1}^2}>1,~\forall m, n\geq1,$$ $$\lim\limits_{m\rightarrow +\infty}\dfrac{\gamma_0^j\gamma_1^j\cdots\gamma_m^j}{\gamma_k^{j+1}\gamma_{k+1}^{j+1}\cdots\gamma_{m+k}^{j+1}}=\infty.$$
Hence from $(2.13)$ and $(2.14)$,  $a_{m,n}=0,~\forall m,n\in\mathbb{Z}$.
\end{proof}

\begin{remark}
If $T$ is a positive operator which satisfies $(0,\epsilon)\cap\sigma_e(T)=\emptyset$ for some $\epsilon>0$, Ker$T=\{0\}$ and $\sigma_e(T)$ is not connected, then there exist an unitary operator $U$ and a compact operator $K$ with norm less than a given positive number such that $(U+K)T$ is a strongly irreducible Cowen-douglas operator. If $T$ is a compact operator which is injective and has dense range, then for any unitary operator $U$ and any compact operator $K$, $(U+K)T$ can not be a Cowen-Douglas operator. It is an easy corollary of Case 1,2,3 of the above proof.
\end{remark}

\section{Application in Basis Theory}

\subsection{}In this subsection, we discuss existence of operators satisfying
both certain basis theory property and strongly irreducibility.

Recall that a sequence $\psi=\{f_{n}\}_{n=1}^{\infty}$ is called a \textsl{Schauder basis} of the Hilbert space $\mathcal{H}$
if and only if for every vector $x \in \mathcal{H}$ there exists a unique sequence $\{\alpha_{n}\}_{n=1}^{\infty}$ of complex numbers
such that the partial sum sequence $x_{k}=\sum_{n=1}^{k} \alpha_{n}f_{n}$ converges to $x$ in norm.

\begin{definition}
An $\omega \times \omega$ matrix $F$ is said to be a \textsl{Schauder matrix} if and only if the sequence
$\psi=\{f_{n}\}_{n=1}^{\infty}$ of its
column vectors in $\mathcal{H}$ comprises a Schauder basis.
\end{definition}

Note that from the definition, each column vector $f_{k}$ is automatically a $l^{2}-$sequence since it represents
a vector in $\mathcal{H}$.
Given an ONB $\varphi=\{e_{n}\}_{n=1}^{\infty}$ and a basis sequence
$\psi=\{f_{n}=\sum_{k=1}^{\infty} f_{kn}e_{k}\}_{n=1}^{\infty}$ of $\mathcal{H}$, then the matrix
$F_{\psi}=(f_{kn})$ is a Schauder matrix by above definition. It shall be called the Schauder matrix corresponding
to the basis $\psi$.

\begin{definition}
A matrix $F$ is called a \textsl{unconditional, conditional, Riesz, normalized} or \textsl{quasinormal} matrix respectively if and only if
the sequence of its column vectors comprise a unconditional, conditional, Riesz, normalized or quasinormal basis. Two Schauder
matrices $F_{\psi}, F_{\varphi}$ are called \textsl{equivalent} if and only if the corresponding bases $\psi$ and $\varphi$ are equivalent.
\end{definition}

\begin{theorem}\label{BPS}
Assume that $F$ is a Schauder matrix and $G^{*}$ is its inverse matrix. We have \\
1. For each invertible matrix $X$, $XF$ is also a Schauder matrix. Moreover, $XF$ is unconditional(conditional)
if and only if $F$ is unconditional(conditional);\\
2. For each diagonal matrix $D=diag(\alpha_{1}, \alpha_{2}, \cdots)$ in which each diagonal element $\alpha_{k}$ is nonzero,
$FD$ is also a Schauder matrix. Moreover, $FD$ is unconditional(conditional)
if and only if $F$ is unconditional(conditional); \\
3. For a unconditional matrix $F$, $FU$ is also a unconditional matrix for $U \in \pi_{\infty}$; \\
4. Two Schauder matrix $F$ and $F^{'}$ are equivalent if and only if there is a invertible matrix $X$ such
that $XF=F^{'}$.
\end{theorem}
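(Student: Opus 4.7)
The plan is to translate every matrix-level claim into a statement about the corresponding basis $\psi=\{f_{n}\}$ of columns in $\mathcal{H}$, and then use standard properties of Schauder bases under bounded invertible operators and rearrangements. The fixed ONB $\varphi=\{e_{n}\}$ identifies each invertible $\omega\times\omega$ matrix $X$ with a bounded invertible operator $T_{X}\in\mathcal{L}(\mathcal{H})$, and identifies each column of a matrix with a vector in $\mathcal{H}$, so matrix multiplication $XF$ is interpreted column-by-column as $(XF)_{\cdot,n}=T_{X}f_{n}$.

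For (1): the $n$-th column of $XF$ is $T_{X}f_{n}$. Given $y\in\mathcal{H}$, uniquely expand $T_{X}^{-1}y=\sum_{n}\alpha_{n}f_{n}$; by continuity of $T_{X}$ this yields $y=\sum_{n}\alpha_{n}(T_{X}f_{n})$, and uniqueness of $\{\alpha_{n}\}$ follows by applying $T_{X}^{-1}$ to any competing expansion. Preservation of unconditionality is automatic since $T_{X}$ and $T_{X}^{-1}$ each send convergent rearrangements to convergent rearrangements; the conditional case follows by contraposition from the symmetric statement applied to $X^{-1}$. For (2): the columns of $FD$ are $\alpha_{n}f_{n}$ with $\alpha_{n}\neq0$, and the rescaling $\beta_{n}\mapsto\beta_{n}/\alpha_{n}$ gives a bijection between Schauder expansions in $\{f_{n}\}$ and in $\{\alpha_{n}f_{n}\}$ which is termwise, hence preserves both conditional and unconditional convergence. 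For (3): right-multiplication by a permutation matrix $U\in\pi_{\infty}$ permutes the columns, and by definition a basis is unconditional iff every rearrangement of it is again a Schauder basis, so $FU$ inherits unconditionality from $F$. For (4): by definition, two Schauder bases $\psi,\psi'$ are equivalent iff the map $f_{n}\mapsto f'_{n}$ extends to a bounded invertible $T\in\mathcal{L}(\mathcal{H})$; the matrix $X$ of $T$ with respect to $\varphi$ then satisfies $XF=F'$, and conversely any invertible $X$ with $XF=F'$ furnishes such a $T$ via $T=T_{X}$.

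The main obstacle, although not technically deep, is keeping straight the distinction between $F$ itself (a Schauder matrix whose columns are $\ell^{2}$-vectors but which in general does \emph{not} define a bounded operator $\ell^{2}\to\mathcal{H}$, unless $\psi$ is a Riesz basis) and $X$ (the matrix of a genuine bounded invertible operator on $\mathcal{H}$). All the matrix products appearing in the statement must be read column-by-column, not as compositions of bounded operators on $\ell^{2}$; once this convention is fixed, each of the four assertions reduces to a one-step application of the boundedness and invertibility of $T_{X}$, of the nonvanishing of the diagonal entries $\alpha_{n}$, or of the permutational characterization of unconditional bases.
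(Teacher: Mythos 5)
The paper never proves Theorem \ref{BPS}: it is stated and immediately put to use, with the basis-theoretic groundwork of this kind deferred to the companion paper \cite{Cao-Tian-Hou}, so there is no internal argument to compare yours against. Judged on its own, your proposal is correct and is the natural argument: reading every product column-by-column, part 1 is the statement that a bounded invertible operator carries a Schauder (resp.\ unconditional) basis to a Schauder (resp.\ unconditional) basis, proved exactly as you do via $T_{X}^{-1}y=\sum_{n}\alpha_{n}f_{n}$ and continuity both ways; part 2 is invariance of expansions under nonzero rescaling of the basis vectors; part 3 is the permutation characterization of unconditionality; part 4 is the operator formulation of equivalence of bases. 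Your emphasis that $F$ itself need not represent a bounded operator, so $XF$ must be interpreted columnwise rather than as a composition on $\ell^{2}$, is exactly the right point; to make it airtight, note that the $(k,n)$ entry $\sum_{j}X_{kj}F_{jn}$ converges absolutely by Cauchy--Schwarz (the rows of the matrix of a bounded operator are $\ell^{2}$-sequences) and coincides with the $k$-th coordinate of $T_{X}f_{n}$ by continuity of the inner product, so the columns of $XF$ really are the vectors $T_{X}f_{n}$.

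Two steps are stated as definitions but are really standard theorems, and you should either cite them or argue them: in part 4, the usual definition of equivalent bases (the one implicit in \cite{B}, \cite{Singer}) is that the two bases admit exactly the same convergent coefficient sequences, and the existence of a bounded invertible $T$ with $Tf_{n}=f'_{n}$ is then a classical closed-graph consequence, which your ``only if'' direction needs; in part 3, the assertion that a basis is unconditional if and only if every rearrangement of it is again a basis is likewise a theorem (the definition being unconditional convergence of every expansion), though the direct argument is short: unconditional convergence gives existence of expansions in the permuted sequence, and uniqueness follows by applying the coefficient functionals of the original basis to the permuted partial sums. With these two citations (or one-line arguments) supplied, your proof is complete.
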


For a Schauder matrix $M_{\psi}$ and a matrix $X$ of some invertible operator $T$, $M_{\psi^{'}}=XM_{\psi}$ is the Schauder matrix
of the Schauder basis $\psi^{'}=\{Tf_{n}\}_{n=1}^{\infty}$. Given an ONB $\varphi$, the orbit set
$$
O_{gl}(M_{\psi})=\{XM_{\psi}; \mbox{matrix $X$ represents an invertible operator $T \in Gl(\mathcal{H})$}\}
$$
consists of all equivalent bases of $\psi$. Our main theorem \ref{Theorem: XT can be strongly irreducible} and above theorem \ref{BPS}
tell us that we always can pick a strongly irreducible
operator as the representation element in the orbit $O_{gl}(M_{\psi})$.

\begin{theorem}\label{Theorem: XM can represents a strongly irreducible operator I}
Suppose that $\psi=\{f_{n}\}_{n=1}^{\infty}$ is a basis sequence and its corresponding
Schauder matrix $M_{\psi}$ under some given ONB represents a bounded operator $T_{\psi}$.
Then there is a matrix $X$ of some invertible operator such that $XM_{\psi}$ represents an strongly irreducible bounded compact operator in $L(\mathcal{H})$.
\end{theorem}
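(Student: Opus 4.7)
The plan is to invoke the main Theorem~\ref{Theorem: XT can be strongly irreducible} directly on the bounded operator $T_\psi$, and then convert back to the language of Schauder matrices via Theorem~\ref{BPS}. The argument is quite short: essentially all of the work has already been done.

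First I would verify that $T_\psi$ satisfies the hypotheses of Theorem~\ref{Theorem: XT can be strongly irreducible}, namely $\operatorname{Ker} T_\psi = (0)$ and $\overline{\operatorname{Ran} T_\psi} = \mathcal{H}$. If $\varphi=\{e_n\}_{n=1}^{\infty}$ is the given ONB, then the defining relation $f_n=\sum_k f_{kn}e_k$ says exactly that $T_\psi e_n=f_n$, and hence by boundedness $T_\psi\bigl(\sum_n x_n e_n\bigr)=\sum_n x_n f_n$ for every $x=\sum_n x_n e_n\in\mathcal{H}$. Uniqueness of the Schauder expansion of $0$ forces $x_n\equiv 0$, so $T_\psi$ is injective. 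The range of $T_\psi$ contains every finite linear combination of $\{f_n\}$ and is therefore dense.

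Next I would fix any $\epsilon>0$ and apply Theorem~\ref{Theorem: XT can be strongly irreducible} to $T=T_\psi$, shrinking $\epsilon$ if necessary to ensure that $X:=U+K$ is invertible. Then $XT_\psi$ is strongly irreducible. Letting $X$ also denote the matrix of this operator in the ONB $\varphi$, Theorem~\ref{BPS}(1) guarantees that $XM_\psi$ is again a (bounded-operator) Schauder matrix, and it evidently represents the composition $XT_\psi$; this is the operator we want.

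The only genuine obstacle is interpretive rather than mathematical: the word ``compact'' in the conclusion should be read with care, since $X$ invertible forces $XT_\psi$ to be compact if and only if $T_\psi$ itself is compact, in which case $XT_\psi$ is automatically compact as the product of a bounded operator with a compact one and no extra work is needed. Modulo that observation, the substantive content of the theorem---the strong irreducibility of the product---is a direct corollary of Theorem~\ref{Theorem: XT can be strongly irreducible} together with Theorem~\ref{BPS}(1), and the whole proof collapses to a few lines.
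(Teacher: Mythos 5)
Your proof is correct and is essentially the paper's intended argument: the paper states this result without a separate proof, as an immediate consequence of Theorem~\ref{Theorem: XT can be strongly irreducible} (applied to $T_{\psi}$, which is injective with dense range exactly because it sends the ONB onto a Schauder basis, as you verify) combined with Theorem~\ref{BPS}. Your caveat about the word ``compact'' is also well taken: for a general bounded $T_{\psi}$ an invertible $X$ cannot make $XT_{\psi}$ compact unless $T_{\psi}$ already is, so that adjective is evidently a slip carried over from the blowing-up setting of Theorem~\ref{Theorem: XM can represents a strongly irreducible operator II}, where $T_{\psi}$ is itself compact.
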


\begin{corollary}
For a Schauder matrix $M_{\psi}$ representing a bounded operator, there always be matrices of strongly irreducible
operators in its orbit $O_{gl}(M_{\psi})$.
\end{corollary}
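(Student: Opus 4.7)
The plan is to reduce the statement to a direct application of the main theorem (Theorem~\ref{Theorem: XT can be strongly irreducible}) by first verifying that the operator represented by $M_\psi$ satisfies the hypotheses of that theorem, and then invoking Theorem~\ref{BPS}(1) to transport the conclusion back to the language of Schauder matrices.

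First I would check that the operator $T_\psi$ determined by $T_\psi e_n = f_n$ (equivalently, the bounded operator whose matrix in the given ONB is $M_\psi$) is injective with dense range. Density of the range follows immediately because $\{f_n\}$ is a Schauder basis, hence its closed linear span is all of $\mathcal{H}$, and $\mathrm{Ran}\,T_\psi \supseteq \mathrm{span}\{f_n\}$. Injectivity follows from the uniqueness clause in the Schauder basis definition: if $x = \sum \beta_n e_n$ satisfies $T_\psi x = 0$, then the vector $0\in\mathcal{H}$ admits the two expansions $0 = \sum \beta_n f_n = \sum 0 \cdot f_n$, forcing every $\beta_n$ to vanish. (Continuity of the coefficient functionals, which is part of the standard theory of Schauder bases in a Banach space, makes this rigorous.)

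Next I would apply Theorem~\ref{Theorem: XT can be strongly irreducible} to $T_\psi$: there exist a unitary $U$ and a compact $K$ with $\|K\|$ arbitrarily small (so that $U+K$ is invertible) such that $(U+K)T_\psi$ is strongly irreducible. Let $X$ be the matrix of $U+K$ in the chosen ONB; then $X$ is the matrix of an invertible operator, so by Theorem~\ref{BPS}(1) the product $XM_\psi$ is itself a Schauder matrix. By construction it represents the bounded operator $(U+K)T_\psi$, which is strongly irreducible, completing the proof of the strong irreducibility assertion.

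The only step I see as genuinely delicate is the compactness clause appearing in the conclusion, since nothing in the above argument produces a compact operator unless $T_\psi$ itself is already compact (in which case $XT_\psi$ is automatically compact, as compacts form an ideal). To secure compactness in general one can insert a preparatory step using Theorem~\ref{BPS}(2): multiply $M_\psi$ on the right by a diagonal matrix $D = \mathrm{diag}(d_n)$ with $d_n>0$ chosen to decrease rapidly enough that $T_\psi D$ is compact (possible because the columns $f_n$ are norm-bounded by the uniform basis constant, so scaling by a sequence in $\ell^2$ yields a Hilbert--Schmidt operator). This replaces $\psi$ by the equivalent basis $\{d_n f_n\}$, whose Schauder matrix still satisfies the hypotheses of the main theorem, and the argument above then produces an $X'$ with $X'(M_\psi D)$ strongly irreducible and compact; absorbing $D$ on the right shows the resulting Schauder matrix lies in the orbit $O_{gl}(M_\psi)$, which is the content needed for the subsequent corollary.
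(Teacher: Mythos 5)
Your main argument is correct and is essentially the paper's own route: $T_{\psi}$ is injective with dense range because $\{f_{n}\}_{n=1}^{\infty}$ is a Schauder basis, Theorem \ref{Theorem: XT can be strongly irreducible} then gives an invertible $X=U+K$ with $XT_{\psi}$ strongly irreducible, and Theorem \ref{BPS} guarantees that $XM_{\psi}$ is again a Schauder matrix, hence an element of the orbit $O_{gl}(M_{\psi})$ representing a strongly irreducible operator. Your closing paragraph on compactness is unnecessary for this corollary, whose conclusion asserts only strong irreducibility; moreover, the proposed fix does not work as stated: multiplying on the right by a diagonal $D=\mathrm{diag}(d_{n})$ with $d_{n}\to 0$ yields a basis $\{d_{n}f_{n}\}$ that is in general \emph{not} equivalent to $\psi$, and $X'(M_{\psi}D)$ is not of the form $YM_{\psi}$ for an invertible $Y$, so ``absorbing $D$'' would take you outside the orbit $O_{gl}(M_{\psi})$, which by definition only allows left multiplication by invertible matrices. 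Since that digression is superfluous to the statement actually being proved, your proof of the corollary itself is sound and coincides with the paper's.
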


By the pole decomposition theorem of operators, there is also a unitary matrix $U$ such that $UM_{\psi}$ represents a self-adjoint operator.
Hence the orbit $O_{gl}(M_{\psi})$ has both the operators having a large number of strongly reducible subspaces and
the operators having no nontrivial strongly reducible subspace.

\subsection{The blowing up matrix}
To observe the relations between basis theory and operator theory, is it enough to just consider bounded operators?
Let $\psi=\{f_{n}\}_{n=1}^{\infty}$ be a Schauder basis and $M_{\psi}$ is the corresponding Schauder matrix under some ONB
$\varphi=\{e_{n}\}_{n=1}^{\infty}$. In general, $M_{\psi}$ does not represents a bounded operator even for a quasinormal
basis $\psi$. Following example show this phenomenon.

\begin{example}\label{Example: QNBnotBounded}(see \cite{Singer}, Example14.5, p429.)
Let $\{\alpha_{n}\}_{n=1}^{\infty}$ be a sequence of positive numbers such that $\sum_{n=1}^{\infty}n\alpha_{n}^{2}<\infty$
and $\sum_{n=1}^{\infty} \alpha_{n}=\infty$. Then the sequence $\{f_{n}\}_{n=1}^{\infty}$ defined as
$$
f_{2n-1}=e_{2n-1}+\sum_{i=n}^{\infty} \alpha_{i-n+1}e_{2i}, ~~f_{2n}=e_{2n}, ~~n=1, 2, \cdots
$$
is a conditional basis of $\mathcal{H}$. Denote by $F$ the corresponding Schauder matrix and $T$ the operator it represented.
Now we shall show that $T$ is a unbounded operator indeed. To do this, we rewrite the matrix of $T$ under new ONB. Here we use the fact that
$T$ is bounded if and only if $U^{*}TU$ is bounded for each unitary operator $U$. Denote by $\mathcal{H}_{1}=span \{e_{2n-1}; n=1, 2, \cdots\}$
and $\mathcal{H}_{2}=span \{e_{2n}; n=1, 2, \cdots\}$. Now we can rewrite $F$ into the form
$$
\begin{array}{rll}
T=&
\left(
\begin{array}{cc}
I & 0 \\
T_{1} & I
\end{array}
\right) & \begin{array}{c}
           \mathcal{H}_{1} \\
           \mathcal{H}_{2}
          \end{array}
\end{array}
$$
in which the operator $T_{1}$ has a matrix as
$$
\left(
 \begin{array}{cccc}
  \alpha_{1}&0&0&0 \\
  \alpha_{2}&\alpha_{1}&0&0\\
  \alpha_{3}&\alpha_{2}&\alpha_{1}&0\\
  \vdots&\ddots&\ddots&\ddots
 \end{array}
\right).
$$
Denote by $S^{(2)}$ the shift operator defined as $S^{(2)}e_{2n}=e_{2n+2}$. It is trivial to check
that we have $S^{(2)}T_{1}=T_{1}S^{(2)}$. Hence we have
$T_{2}$ is in the commutant $\mathcal{A}^{'}(S^{(2)})$ if $T_{2}$ is a bounded operator. But it is impossible since the holomorphic
function defined by series $\sum_{n=1}^{\infty} \alpha_{n}z^{n}$ is not in the class $H^{\infty}$
by the fact $\sum_{n=1}^{\infty} \alpha_{n}=\infty$(cf, \cite{ALS}, theorem 3, p62).
\end{example}

However, we always can relate a basis to a bounded operator as follows.
Assume that $M=(f_{kn})$ is an $\omega \times \omega$ matrix such that each column
vector $f_{n}=\{f_{kn}\}_{k=1}^{\infty}$ is an $l^{2}-sequence$.
Then under a fixed ONB sequence $\{e_{n}\}_{n=1}^{\infty}$, $\mathbf{f}_{n}=\sum_{n=1}^{\infty}f_{kn}e_{k}$ is a vector in $\mathcal{H}$.
Then for a nonzero complex number sequence $\alpha=\{\alpha_{k}\}_{k=1}^{\infty}$, it is clear that
$M_{\alpha}=(\alpha_{1}f_{1}, \alpha_{2}f_{2}, \alpha_{3}f_{3}, \cdots)$ is also a matrix with $l^{2}-$sequences as its column vectors.
Moreover, if $M$ represents a bounded operator and $\alpha$ be a bounded sequence, then $M_{\alpha}$
also be a matrix of a bounded operator
since $M^{'}=MD$ in which $D$ is the bounded diagonal operator with $\alpha_{k}$ as its diagonal elements.
Inspired by the properties of bases(cf, proposition 4.1.5 and 4.2.12 in the book \cite{B}), we have the following definition.

\begin{definition}\label{Def: Blowing up Matrix}
$M_{\alpha}$ is called \textsl{the blowing up matrix of $M$ with sequence $\alpha$}.
\end{definition}

\begin{theorem}\label{Theorem: Each Basis has a Related Blowing Up Schauder Matrix}
Suppose that $\{f_{n}\}_{n=1}^{\infty}$ is a basis sequence and $\{\alpha_{n}\}_{n=1}^{\infty}$ is a sequence of complex
numbers such that the sum $\sum_{n=1}^{\infty} ||\alpha_{n}f_{n}||$ be finite. Moreover, let $\psi=\{\alpha_{n}f_{n}\}_{n=1}^{\infty}$.
Then the matrix $M_{\psi}$ represents a bounded compact operator under any ONB.
\end{theorem}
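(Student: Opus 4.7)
The plan is to realize $M_{\psi}$ as a norm-convergent series of rank-one operators. Fix an ONB $\{e_{n}\}_{n=1}^{\infty}$ under which the matrix $M_{\psi}$ is read, and define, for each $n$, the rank-one operator
\[
R_{n} x \;=\; \langle x, e_{n}\rangle\,(\alpha_{n} f_{n}),\qquad x\in\mathcal{H}.
\]
Each $R_{n}$ has operator norm $\|R_{n}\|=\|\alpha_{n}f_{n}\|$. By the hypothesis $\sum_{n=1}^{\infty}\|\alpha_{n}f_{n}\|<\infty$, the partial sums $T_{N}=\sum_{n=1}^{N}R_{n}$ form a Cauchy sequence in $\mathcal{L}(\mathcal{H})$, and hence converge in operator norm to a bounded operator $T\in\mathcal{L}(\mathcal{H})$ with $\|T\|\le\sum_{n=1}^{\infty}\|\alpha_{n}f_{n}\|$.

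Next I would verify that $T$ is compact and that its matrix is $M_{\psi}$. Each $T_{N}$ is a finite-rank operator, hence compact; since $\mathcal{K}(\mathcal{H})$ is norm-closed in $\mathcal{L}(\mathcal{H})$ and $T_{N}\to T$ in operator norm, $T$ itself lies in $\mathcal{K}(\mathcal{H})$. To match the matrix, compute
\[
\langle T e_{n}, e_{k}\rangle \;=\; \alpha_{n}\langle f_{n}, e_{k}\rangle \;=\; \alpha_{n} f_{kn},
\]
so the $n$-th column of the matrix of $T$ (with respect to $\{e_{n}\}$) is exactly the $l^{2}$-sequence $\alpha_{n}f_{n}$, i.e.\ the $n$-th column of $M_{\psi}$. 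Therefore $M_{\psi}$ represents the bounded compact operator $T$.

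For the phrase \emph{``under any ONB''}, I would observe that replacing $\{e_{n}\}$ by another ONB $\{e'_{n}\}$ amounts to precomposing $T$ with the unitary $U$ sending $e'_{n}\mapsto e_{n}$, since the columns $\alpha_{n}f_{n}$ are fixed vectors in $\mathcal{H}$ independent of the basis used to index them. Both boundedness and compactness are preserved by composition with a unitary, so the conclusion does not depend on the ONB chosen.

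There is no real technical obstacle here: the entire argument is the elementary observation that an absolutely norm-summable series of rank-one operators defines a compact operator. The only mild subtlety worth stating clearly is the identification in the last display, which verifies that the resulting operator is the one the matrix $M_{\psi}$ is supposed to represent rather than some other candidate (e.g.\ its transpose).
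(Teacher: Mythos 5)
Your proposal is correct, and its core is the same as the paper's: both arguments ultimately rest on the rank-one operators $g_{n}\otimes e_{n}$ (your $R_{n}$, with $g_{n}=\alpha_{n}f_{n}$) and the tail estimate $\sum_{l>n}\|g_{l}\|\to 0$, which is exactly how the paper proves compactness. Where you diverge is the boundedness step: you note that absolute summability of $\|R_{n}\|$ makes the partial sums Cauchy in operator norm, so boundedness (and compactness) come for free from norm convergence in $\mathcal{L}(\mathcal{H})$, whereas the paper first checks that $\sum_{k}\xi_{k}g_{k}$ converges pointwise for every $x=\sum_{k}\xi_{k}e_{k}$ and then invokes the closed graph theorem to get boundedness before running the same rank-one tail estimate for compactness. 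Your route is the more economical one, since the closed graph argument is rendered unnecessary once one works with norm convergence of the operator series from the start; you also add the explicit verification that the matrix of the limit operator is $M_{\psi}$ and a remark on independence of the ONB, both of which the paper leaves implicit. No gaps.
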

\begin{proof}
Fixed an ONB $\varphi=\{e_{n}\}_{n=1}^{\infty}$. Then each vector $f_{n}$ has a unique $l^{2}-$coodinate
$f_{n}=\{f_{kn}e_{k}\}_{k=1}^{\infty}$. Let $g_{n}=\{g_{kn}\}_{k=1}^{\infty}=\alpha_{n}f_{n}=\{\alpha f_{kn}e_{k}\}_{k=1}^{\infty}$.
Then the matrix $M_{\psi}=(g_{kn})$ under the ONB $\varphi$. For any vector $x=\sum_{k=1}^{\infty} \xi_{k}e_{k}$,
the series
$
\sum_{k=1}^{\infty} \xi_{k}g_{k}
$
converges since we have
$$
||\sum_{k=m}^{\infty} \xi_{k}g_{k}|| \le \sum_{k=m}^{\infty} ||\xi_{k}g_{k}||
\le \sup_{k}\{|\xi_{k}|\}(\sum_{k=m}^{\infty}||g_{k}||) \rightarrow 0
$$
as $m \rightarrow 0$. Hence $M_{\psi}$ represents an operator $T_{\psi}$ well-defined everywhere on $\mathcal{H}$. Now by the
closed graph theorem, we just need to show that $M_{\psi}$ also represents a closed operator to finish the proof.
Suppose that $x_{n}=\{\xi_{k}^{(n)}\}_{k=1}^{\infty} \rightarrow x_{0}=\{\xi_{k}^{(0)}\}_{k=1}^{\infty}$ in the norm.
Now for any $\epsilon >0$, there is some integer $n_{0}$ such that $|\xi^{(n)}_{k}-\xi^{(0)}_{k}|<1$ holds for any $n>n_{0}$.
Let $k_{1}$ be the integer satisfying $\sum_{k=k_{1}+1}^{\infty} ||g_{k}||<\frac{\epsilon}{2}$. Then there is an integer
$n_{1}$ such that we have $\sum_{k=1}^{k_{1}} |\xi_{k}^{(n)}-\xi_{k}^{(0)}|||g_{k}||<\frac{\epsilon}{2}$
for all $n>n_{1}$. Let $N=\max\{n_{0}, n_{1}\}$.
Hence for $n>N$ we have
$$
\begin{array}{rl}
||T_{\psi}(x_{n}-x_{0})||& =||\sum_{k=1}^{\infty}(\xi_{k}^{(n)}-\xi_{k}^{(0)})g_{k}|| \\
                         & \le ||\sum_{k=1}^{k_{1}}(\xi_{k}^{(n)}-\xi_{k}^{(0)})g_{k}||+||\sum_{k=k_{1}+1}^{\infty}(\xi_{k}^{(n)}-\xi_{k}^{(0)})g_{k}|| \\
                         & \le \sum_{k=1}^{k_{1}} |\xi_{k}^{(n)}-\xi_{k}^{(0)}|||g_{k}||
                         + \sum_{k=k_{1}+1}^{\infty} ||g_{k}||<\frac{\epsilon}{2} \\
                         & \le \frac{\epsilon}{2}+\frac{\epsilon}{2}.
\end{array}
$$

Now it is left to show that $M_{\psi}$ is also a compact operator. Let $K_{n}=\sum_{l=1}^{n} g_{l} \otimes e_{l}$ in which
the operator $g_{l} \otimes e_{l}$ is defined as $(g_{l} \otimes e_{l})x=(x, e_{l})g_{l}$. Then we have
$$
\begin{array}{rl}
||(T_{\psi}-K_{n})||&=\sup_{||x||=1}||(T_{\psi}-K_{n})x|| \\
              &=\sup_{||x||=1}||(\sum_{l=n+1}^{\infty} g_{l} \otimes e_{l})x|| \\
              &\le \sup_{||x||=1}\sum_{l=n+1}^{\infty} ||g_{l} \otimes e_{l})x|| \\
              &\le \sum_{l=n+1}^{\infty} ||g_{l}|| \rightarrow 0 \\
\end{array}
$$
as $n \rightarrow \infty$. Hence $T_{\psi}$ is a norm limit of a sequence of compact operators, that is, $T_{\psi}$
is a compact operator.
\end{proof}

Above theorem enlarge the class of bases that can be studied by bounded operators.

\begin{theorem}\label{Theorem: XM can represents a strongly irreducible operator II}
Suppose that $\{f_{n}\}_{n=1}^{\infty}$ is a basis sequence and $\{\alpha_{n}\}_{n=1}^{\infty}$ is a sequence of complex
numbers such that the sum $\sum_{n=1}^{\infty} ||\alpha_{n}f_{n}||$ be finite. Moreover, let $\psi=\{\alpha_{n}f_{n}\}_{n=1}^{\infty}$.
Then there is a matrix $X$ of some invertible operator such that $XM_{\psi}$ represents an strongly irreducible bounded compact operator in $L(\mathcal{H})$.
\end{theorem}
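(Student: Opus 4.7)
The plan is to assemble the statement from three ingredients already present in this excerpt: the blowing-up construction of Theorem~\ref{Theorem: Each Basis has a Related Blowing Up Schauder Matrix}, the main operator-theoretic result Theorem~\ref{Theorem: XT can be strongly irreducible}, and the invariance of Schauder matrices under left multiplication by an invertible operator (Theorem~\ref{BPS}(1)). No new analytic input is needed; the novelty of this corollary lies entirely in combining these pieces.

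First I will apply Theorem~\ref{Theorem: Each Basis has a Related Blowing Up Schauder Matrix} to $\psi = \{\alpha_n f_n\}_{n=1}^{\infty}$, whose summability hypothesis $\sum_n \|\alpha_n f_n\| < \infty$ is precisely what is assumed. This yields a bounded compact operator $T_\psi \in \mathcal{L}(\mathcal{H})$ represented by the matrix $M_\psi$ under the fixed ONB $\{e_n\}_{n=1}^{\infty}$, and characterised by $T_\psi e_n = \alpha_n f_n$.

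Next I will verify that $T_\psi$ satisfies the hypotheses of the main theorem, namely $\mathrm{Ker}\, T_\psi = \{0\}$ and $\overline{\mathrm{Ran}\, T_\psi} = \mathcal{H}$. Because $\{f_n\}$ is a basis and each $\alpha_n$ must be nonzero for $\psi$ to remain a basis sequence, the blown-up sequence $\{\alpha_n f_n\}$ is itself a Schauder basis of $\mathcal{H}$; uniqueness of the coefficients in a basis expansion forces injectivity of $T_\psi$, while density of the linear span of a basis produces the dense range. Theorem~\ref{Theorem: XT can be strongly irreducible} then supplies, for any prescribed $\epsilon > 0$, an invertible operator $X = U + K$ (with $U$ unitary and $K$ compact of norm less than $\epsilon$) such that $X T_\psi$ is strongly irreducible.

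Finally, viewing $X$ as its matrix under the same ONB, Theorem~\ref{BPS}(1) asserts that $X M_\psi$ is again a Schauder matrix, and it plainly represents the operator $X T_\psi$. Compactness is preserved since the compact operators form a two-sided ideal in $\mathcal{L}(\mathcal{H})$; hence $X T_\psi$ is a strongly irreducible bounded compact operator, as required. The argument contains no genuine obstacle. The only mildly delicate point is the injectivity and dense-range verification for $T_\psi$, and this rests only on the standard uniqueness property of Schauder bases together with the implicit requirement that every $\alpha_n$ be nonzero.
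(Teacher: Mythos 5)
Your proposal is correct and is exactly the route the paper intends: the theorem is stated without a separate proof immediately after Theorem~\ref{Theorem: Each Basis has a Related Blowing Up Schauder Matrix} precisely because it follows by feeding the compact operator $T_{\psi}$ produced there into Theorem~\ref{Theorem: XT can be strongly irreducible} (with $\epsilon$ small enough that $U+K$ is invertible) and using that compactness survives multiplication by the invertible $X$. The only detail the paper leaves tacit is the verification that $T_{\psi}$ is injective with dense range, which you supply correctly via the uniqueness of basis expansions and the implicitly assumed nonvanishing of the $\alpha_{n}$ from Definition~\ref{Def: Blowing up Matrix}.
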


\subsection{}
Now we turn to study the existence of strongly irreducible Schauder operator.
Recall that a Schauder operator $T$ is an operator mapping
some ONB into a Schauder basis.
In his paper \cite{Ole}, Olevskii call an operator to be \textsl{generating} if and only if it maps some ONB
into a quasinormal conditional basis. Hence our definition of Schauder operator is a generalization of Olevskii's
one. A Schauder operator is said to be conditional if it maps some ONB sequence into a conditional basis sequence.
Our theorem \ref{Theorem: XT can be strongly irreducible} ensure the existence of operators satisfying special properties.
For convenience and self-sufficiency, we list some results on an operator theory description
of Schauder basis appearing in paper \cite{Cao-Tian-Hou} without proof.

\begin{theorem}\label{BPSO}
Following conditions are equivalent:\\
1. $T$ is a Schauder operator;\\
2. $T$ maps some ONB $\{e_{n}\}_{n=1}^{\infty}$ into a basis;\\
3. $T$ has a polar decomposition $T=UA$ in which $A$ is a Schauder operator;\\
4. Assume that $T$ has a matrix representation $F$ under a fixed ONB $\{e_{n}\}_{n=1}^{\infty}$. There
is some unitary matrix $U$ such that $FU$ is a Schauder matrix.
\end{theorem}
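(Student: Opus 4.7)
My plan is to route all four conditions through Theorem \ref{Theorem: main theorem 1 sec3}, which characterises Schauder operators as exactly the injective operators with dense range. Once that common pivot is fixed, each of the four conditions can be read as a repackaging of ``injective with dense range'', and the equivalences fall out by short verifications.

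The equivalence $1 \Leftrightarrow 2$ is essentially a definitional check: the paper introduces a Schauder operator as one that maps some ONB into a Schauder basis, and the word ``basis'' in condition 2 carries this same meaning, so the two statements coincide. For $1 \Leftrightarrow 4$, I would fix an ONB $\{e_n\}$ and let $F$ be the matrix of $T$ in this basis, so that the $j$-th column of $F$ is the coordinate vector of $Te_j$. A direct computation shows that if $U=(u_{ij})$ is the matrix of a unitary operator $\widetilde U$ in the same basis, then the $j$-th column of $FU$ is the coordinate vector of $T\widetilde U e_j$. Hence $FU$ is a Schauder matrix exactly when $\{T\widetilde U e_j\}$ is a Schauder basis of $\mathcal H$, and as $\widetilde U$ runs over the unitary group the sequence $\{\widetilde U e_j\}$ traverses all ONBs, so condition 4 is just a restatement of condition 1.

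For $1 \Leftrightarrow 3$, the plan is to invoke the polar decomposition $T = U|T|$ and combine it with Theorem \ref{Theorem: main theorem 1 sec3}. The identities $\ker |T| = \ker T$ and $\overline{\mathrm{Ran}\,|T|} = (\ker |T|)^{\perp}$ show that $|T|$ is injective with dense range if and only if $T$ is. In the canonical polar decomposition the partial isometry $U$ has initial space $\overline{\mathrm{Ran}\,|T|}$ and final space $\overline{\mathrm{Ran}\,T}$; both coincide with $\mathcal H$ exactly when $T$ is injective with dense range, so in that regime $U$ is automatically unitary. This delivers the factorisation required by condition 3, and conversely, composing a Schauder operator $A$ on the left by a unitary preserves both injectivity and density of range, returning condition 1.

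The main subtlety I foresee is precisely this unitarity of $U$ in condition 3: one must be explicit that the partial isometry arising from the canonical polar decomposition rises to a genuine unitary exactly when $T$ is injective with dense range, so that the phrase ``polar decomposition $T=UA$ with $A$ a Schauder operator'' designates the intended object rather than merely a partial-isometry factorisation that might fail to be reversible. Once this point is pinned down, Theorem \ref{Theorem: main theorem 1 sec3} together with the matrix-to-operator translation used for condition 4 ties all four characterisations together with no further difficulty.
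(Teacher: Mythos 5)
The paper itself offers no proof of Theorem \ref{BPSO}: it is quoted from \cite{Cao-Tian-Hou} explicitly ``without proof'', so there is nothing internal to compare your argument against; I can only assess it on its own terms, and on those terms it is correct. The equivalence $1\Leftrightarrow 2$ is indeed definitional, and your computation that the $j$-th column of $FU$ is the coordinate vector of $T\widetilde{U}e_{j}$, together with the observation that $\{\widetilde{U}e_{j}\}$ sweeps out all ONBs as $\widetilde{U}$ runs over the unitary group, settles $1\Leftrightarrow 4$ cleanly. For $1\Leftrightarrow 3$ you correctly isolate the one real subtlety: the partial isometry in $T=V|T|$ has initial space $(\ker T)^{\perp}$ and final space $\overline{\mathrm{Ran}\,T}$, so it is unitary precisely when $T$ is injective with dense range, and without that reading of ``polar decomposition'' condition 3 would be false (the unilateral shift $S$ factors as $S\cdot I$ with $I$ a Schauder operator, yet $S$ is not Schauder). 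Two remarks. First, your pivot, Theorem \ref{Theorem: main theorem 1 sec3}, is itself stated in this paper without proof as the main result of the authors' next paper; since Theorem \ref{BPSO} belongs to the same circle of preprints, you should at least note that no circularity is being introduced. Second, you only need the easy direction of Theorem \ref{Theorem: main theorem 1 sec3} (Schauder $\Rightarrow$ injective with dense range, which follows from uniqueness of basis expansions and the fact that a basis spans $\mathcal{H}$): once $V$ is known to be unitary, $|T|=V^{*}T$ carries the ONB $\{e_{n}\}$ to $\{V^{*}Te_{n}\}$, which is again a Schauder basis by Theorem \ref{BPS}(1), and likewise $UA$ maps an ONB to the unitary image of a Schauder basis. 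Replacing your two appeals to the hard direction of Theorem \ref{Theorem: main theorem 1 sec3} by Theorem \ref{BPS}(1) would make the argument essentially self-contained within this paper.
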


\begin{corollary}\label{Corollary: A conditional diagonal operator}(\cite{Cao-Tian-Hou})
Compact operator $K=diag\{1,\frac{1}{2},\frac{1}{3},\cdots\}$ is a conditional operator.
\end{corollary}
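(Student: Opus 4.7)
The aim is to produce an orthonormal basis $\{\phi_n\}$ of $\mathcal{H}$ such that $\{K\phi_n\}$ is a conditional Schauder basis of $\mathcal{H}$; by the paper's definitions, this is exactly what it means for $K$ to be a conditional operator. Note first that $K$ is injective with dense range, so by Theorem \ref{Theorem: main theorem 1 sec3} it is at least a Schauder operator, and by Theorem \ref{BPSO}(4) there already exists a unitary $V$ making $\{KVe_n\}$ a Schauder basis. The content of the corollary is the strengthening that $V$ can be chosen so that this basis fails to be unconditional.

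The plan is to exhibit, inside the unitary orbit of $K$, a compact operator that is visibly a conditional Schauder operator, and then transport the property back to $K$. Starting from the quasinormal conditional basis $\{f_n\}$ supplied by Example \ref{Example: QNBnotBounded} (with $\inf_n\|f_n\|>0$ and $\sup_n\|f_n\|<\infty$), choose positive weights $\alpha_n$ with $\sum_n \|\alpha_n f_n\|<\infty$. Then $\psi=\{\alpha_n f_n\}$ is still a conditional basis (multiplying basis vectors by nonzero scalars preserves the conditional/unconditional dichotomy), and Theorem \ref{Theorem: Each Basis has a Related Blowing Up Schauder Matrix} produces a bounded compact operator $T_\psi\in\mathcal{L}(\mathcal{H})$, injective with dense range, whose columns under the reference ONB $\{e_n\}$ are precisely the $\alpha_n f_n$. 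Thus $T_\psi$ is already a conditional Schauder operator.

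Now arrange the weights $\alpha_n$ so that the singular value sequence of $T_\psi$ equals $\{1/n\}_{n=1}^\infty$, i.e., matches that of $K$. Compact operators sharing singular value sequences are unitarily equivalent, so $T_\psi=UKV$ for some unitaries $U,V\in\mathcal{L}(\mathcal{H})$. Setting $\phi_n:=Ve_n$ gives an ONB with
\[
 K\phi_n \;=\; U^{-1}T_\psi e_n \;=\; U^{-1}(\alpha_n f_n),
\]
the image of the conditional basis $\psi$ under the bounded invertible operator $U^{-1}$. Since bounded invertible operators preserve the conditional/unconditional dichotomy of a basis, $\{K\phi_n\}$ is conditional, as required.

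The principal obstacle is realizing the exact singular value sequence $\{1/n\}$ for $T_\psi$: its singular values are the square roots of the eigenvalues of the Gram matrix of $\{\alpha_n f_n\}$, which in Example \ref{Example: QNBnotBounded} is explicit but not diagonal, so the weights $\alpha_n$ alone do not manifestly hit any prescribed list. I would address this either by a continuity / perturbation argument (varying both $\{\alpha_n\}$ and the defining sequence underlying Example \ref{Example: QNBnotBounded} while tracking singular values continuously until the target $\{1/n\}$ is reached), or by sidestepping the matching entirely: construct $T_\psi$ \emph{directly} in singular value decomposition form $T_\psi=\sum_n (1/n)\,g_n\otimes h_n$ with carefully chosen orthonormal sequences $\{g_n\},\{h_n\}$, where the $\{g_n\}$ are designed, using the mechanism of Example \ref{Example: QNBnotBounded}, so that the image basis is forced to be non-unconditional. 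In either route, the essential place where conditionality enters is the appeal to Example \ref{Example: QNBnotBounded}; the rest is bookkeeping under unitary equivalence.
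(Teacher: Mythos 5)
First, a point of comparison: the paper itself offers no proof of this corollary --- it is quoted from the companion preprint \cite{Cao-Tian-Hou} in a list of results stated ``without proof'' --- so your argument can only be judged on its own merits, not against the authors' (absent) one.

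Your reduction is sound and rather elegant: for an injective compact operator with dense range, the property of being a conditional Schauder operator is invariant under $T\mapsto UTV$ with $U,V$ unitary, and for such operators the singular value sequence is a complete invariant of this equivalence; hence it suffices to exhibit \emph{some} conditional Schauder operator whose singular values are exactly $\{1/n\}$. The steps you use to transport conditionality (scalar rescaling of a basis via Theorem \ref{BPS}(2), invertible images via Theorem \ref{BPS}(1), compactness of the blow-up via Theorem \ref{Theorem: Each Basis has a Related Blowing Up Schauder Matrix}) are all legitimate. The problem is that the one step carrying all of the content --- realizing the singular value sequence $\{1/n\}$ --- is exactly the step you defer. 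Matching an entire infinite sequence of singular values exactly is an infinite-codimension condition; there is no intermediate value theorem available in this setting, so the proposed ``continuity/perturbation'' argument does not get off the ground, and the alternative of building $T_\psi=\sum_n (1/n)\,g_n\otimes h_n$ directly with a non-unconditional image is not a repair of this proof but a different (unwritten) one. Moreover, the distribution of the singular values genuinely matters here and cannot be waved away: by Olevskii's criterion (Theorem \ref{Theorem: Olevskii}), $K=\mathrm{diag}\{1,\frac12,\frac13,\dots\}$ is \emph{not} generating, since for every $0<q<1$ each spectral band $[q^{n+1},q^n]$ of $K$ contains only finitely many eigenvalues $1/m$ and hence only a finite-dimensional spectral subspace. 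So the conditional basis whose existence the corollary asserts cannot be quasinormal, and whether a given positive compact operator is conditional is sensitive to precisely the spectral data your construction fails to control. As it stands, the proposal reduces the corollary to a statement that is essentially as hard as the corollary itself.
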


\begin{corollary}
There is an operator $T \in L(\mathcal{H})$ satisfying following properties:\\
1. $T$ is a strongly irreducible compact operator;\\
2. There exists some ONB such that $T$ has a matrix which is a conditional Schauder matrix.
\end{corollary}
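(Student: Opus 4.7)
The plan is to assemble the three ingredients already developed in the paper: a conditional basis to start from, the blowing up construction that embeds it in the world of bounded compact operators, and the main theorem on strongly irreducible left perturbations together with its preservation of the (un)conditional property under left invertible multiplication.

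First I would pick a conditional basis $\{f_n\}_{n=1}^{\infty}$ of $\mathcal{H}$. Such a basis exists by Corollary \ref{Corollary: A conditional diagonal operator}: the diagonal operator $K=\mathrm{diag}\{1,\tfrac{1}{2},\tfrac{1}{3},\ldots\}$ maps some ONB $\{e_n\}_{n=1}^{\infty}$ into a conditional basis, and I would take $\{f_n\}=\{Ke_n\}$. Next I would blow up this basis into a summable one: choose positive scalars $\alpha_n$ with $\sum_{n=1}^{\infty}\|\alpha_n f_n\|<\infty$, e.g.\ $\alpha_n=2^{-n}\|f_n\|^{-1}$, and set $\psi=\{\alpha_n f_n\}_{n=1}^{\infty}$. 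By Theorem \ref{Theorem: Each Basis has a Related Blowing Up Schauder Matrix} the Schauder matrix $M_\psi$ represents a bounded compact operator under any ONB, so it is safely in the setting of the main theorem. The basis $\psi$ remains conditional because multiplying each basis vector by a nonzero scalar gives an equivalent basis; at the matrix level this is exactly $M_\psi=M_{\{f_n\}}\cdot D_\alpha$ with $D_\alpha=\mathrm{diag}(\alpha_1,\alpha_2,\ldots)$, and Theorem \ref{BPS}(2) guarantees that $M_\psi$ is a conditional Schauder matrix iff $M_{\{f_n\}}$ is.

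Now I would apply Theorem \ref{Theorem: XM can represents a strongly irreducible operator II} to $\psi$: this yields an invertible matrix $X$ such that $XM_\psi$ represents a strongly irreducible bounded compact operator $T\in\mathcal{L}(\mathcal{H})$. The final step is to check that the conditionality of the Schauder matrix survives left multiplication by $X$. This is precisely Theorem \ref{BPS}(1): for any invertible $X$, $XM_\psi$ is a conditional Schauder matrix exactly when $M_\psi$ is. Hence the matrix $XM_\psi$ is a conditional Schauder matrix representing $T$ under the chosen ONB, which gives both properties (1) and (2) simultaneously.

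The construction is essentially a diagram chase through the three theorems; the one place to be careful is that strong irreducibility is extracted from Theorem \ref{Theorem: XM can represents a strongly irreducible operator II} while conditionality must be maintained through the \emph{same} left multiplication. The conceptual point making the whole argument work is that Theorem \ref{BPS}(1) decouples these two features: left multiplication by an invertible matrix is free to adjust the operator theoretic structure without disturbing the (un)conditional nature of the column-vector basis. Thus the only genuine issue, and the step I would check most carefully, is that the invertible $X$ produced by Theorem \ref{Theorem: XM can represents a strongly irreducible operator II} is indeed the matrix of an invertible \emph{operator} (so that Theorem \ref{BPS}(1) applies), which is built into the statement of that theorem.
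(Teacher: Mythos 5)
Your argument is correct, and it reaches the conclusion by a mildly different route than the paper. The paper's own proof is more direct: it takes $K=\mathrm{diag}\{1,\tfrac12,\tfrac13,\dots\}$ from Corollary \ref{Corollary: A conditional diagonal operator}, notes (via Theorem \ref{BPSO}) that there is a unitary $U$ with $KU$ a conditional Schauder matrix, and then applies the main Theorem \ref{Theorem: XT can be strongly irreducible} directly to $KU$ --- no rescaling is needed, because $K$ is already compact, so $KU$ already represents a bounded compact operator, and left multiplication by the invertible $U'+K'$ preserves compactness and, by Theorem \ref{BPS}(1), conditionality. You instead insert the blowing-up step: rescale the conditional basis $\{Ke_n\}$ by $\alpha_n$ so that $\sum\|\alpha_nf_n\|<\infty$, invoke Theorem \ref{Theorem: Each Basis has a Related Blowing Up Schauder Matrix} for boundedness and compactness of $M_\psi$, and then Theorem \ref{Theorem: XM can represents a strongly irreducible operator II} (which is itself a packaging of the main theorem) for strong irreducibility, with Theorem \ref{BPS}(1)--(2) guarding conditionality through the right multiplication by $D_\alpha$ and the left multiplication by $X$. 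This detour is logically sound and buys generality (it would work even if the starting conditional Schauder matrix did not represent a bounded operator), but here it is redundant, since compactness of $K$ already places $KU$ in the scope of Theorem \ref{Theorem: XT can be strongly irreducible}.

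One small inaccuracy to fix: multiplying the basis vectors by the scalars $\alpha_n$ does \emph{not} in general produce an \emph{equivalent} basis (equivalence in the sense of Theorem \ref{BPS}(4) corresponds to left multiplication by an invertible operator, and your $\alpha_n$ tend to $0$), so the phrase ``gives an equivalent basis'' is wrong as stated. What is true, and what your argument actually uses, is exactly Theorem \ref{BPS}(2): right multiplication by a diagonal matrix with nonzero entries preserves the Schauder property and the conditional/unconditional character. Since you cite that statement, the proof stands once the parenthetical justification is removed.
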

\begin{proof}
Fixed an ONB, suppose that $K=diag\{1,\frac{1}{2},\frac{1}{3},\cdots\}$ be the diagonal operator appearing in
corollary \ref{Corollary: A conditional diagonal operator}. Then there is some unitary matrix $U$ such that
$KU$ is a conditional Schauder matrix. Then apply theorem \ref{Theorem: XT can be strongly irreducible} and theorem \ref{BPSO}
to the matrix $KU$.
\end{proof}

\begin{corollary}
There is an operator $T \in L(\mathcal{H})$ satisfying following properties:\\
1. $T$ is a strongly irreducible compact operator;\\
2. There exists some ONB such that $T$ has a matrix which is a unconditional Schauder matrix.
\end{corollary}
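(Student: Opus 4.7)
The plan is to follow the same scheme as the preceding corollary, replacing the ``conditional'' diagonal operator with one that directly yields an unconditional basis. Fix an ONB $\{e_n\}_{n=1}^{\infty}$ and let $K=\operatorname{diag}\{1,1/2,1/3,\ldots\}$ act on it. The columns of its matrix $M_K$ under $\{e_n\}$ are the orthogonal vectors $\{(1/n)e_n\}$, which form an unconditional Schauder basis of $\mathcal{H}$; hence $M_K$ is an unconditional Schauder matrix. Moreover $K$ is compact, injective, and has dense range.

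Apply Theorem \ref{Theorem: XT can be strongly irreducible} to $K$: for $\epsilon>0$ sufficiently small, there exist a unitary $U$ and a compact operator $\tilde K$ with $\|\tilde K\|<\epsilon$ such that $X:=U+\tilde K$ is invertible and $XK$ is strongly irreducible. Set $T:=XK$.

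The operator $T$ is compact as the product of a bounded operator $X$ with the compact operator $K$. Its matrix under $\{e_n\}$ equals $X\cdot M_K$, where $X$ is now regarded as the invertible matrix of an invertible operator. By Theorem \ref{BPS}(1), left-multiplication by an invertible matrix preserves both the Schauder-matrix property and unconditionality, so $X\cdot M_K$ is again an unconditional Schauder matrix. Thus $T$ is a strongly irreducible compact operator whose matrix under $\{e_n\}$ is an unconditional Schauder matrix, as required.

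The argument encounters no serious obstacle; it is a direct parallel of the previous corollary, in fact slightly simpler. The only substantive point is the observation that $M_K$ is already an unconditional Schauder matrix, which is immediate since its columns form an orthogonal basis of $\mathcal{H}$ (orthogonal bases being trivially unconditional). One does not even need the intermediate step, required in the conditional case, of right-multiplying by some unitary to turn the diagonal matrix into a Schauder matrix of the desired type.
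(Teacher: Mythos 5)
Your proposal is correct and follows essentially the same route as the paper: take $K=\operatorname{diag}\{1,\tfrac{1}{2},\tfrac{1}{3},\cdots\}$, whose matrix is already an unconditional Schauder matrix, apply Theorem \ref{Theorem: XT can be strongly irreducible} to get an invertible $X=U+\tilde K$ with $XK$ strongly irreducible and compact, and use the invariance of (unconditional) Schauder matrices under left multiplication by invertible matrices. You are merely more explicit than the paper's two-line proof, invoking Theorem \ref{BPS} directly for the preservation of unconditionality, which is exactly the intended argument.
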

\begin{proof}
Fixed an ONB, suppose that $K=diag\{1,\frac{1}{2},\frac{1}{3},\cdots\}$ be the diagonal operator appearing in
corollary \ref{Corollary: A conditional diagonal operator}. Then apply theorem \ref{Theorem: XT can be strongly irreducible} and theorem \ref{BPSO} to the matrix $K$.
\end{proof}

By the following main theorem 1 of \cite{Ole}, we can get the same results about the non-compact case.

\begin{theorem}\label{Theorem: Olevskii}(\cite{Ole}, p479)
A bounded operator $T$ is generating if and only if the following conditions hold:

a) the operators $T$ and $T^*$ do not admit the eigenvalue $\lambda=0$;

b) there exists a number $q,0<q<1$ such that to each segment $[q^{n+1},q^n]$ in the spectral decomposition of the positive operator $(T^*T)^{\frac{1}{2}}$, there corresponds an infinite dimensional invariant subspace.
\end{theorem}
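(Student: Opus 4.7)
The plan is to reduce the statement to the case of a positive operator via polar decomposition, and then treat the two implications separately using the spectral decomposition of $|T|=(T^{*}T)^{1/2}$ as the main geometric tool. Writing $T=V|T|$ with $V$ unitary (which is automatic once condition (a) holds, as in the opening move of the proof of Theorem \ref{Theorem: XT can be strongly irreducible}), I observe that applying a unitary to a basis preserves the basis property as well as quasinormality and conditionality. Hence $T$ is generating if and only if $|T|$ is, and one may assume throughout that $T=|T|\geq 0$.

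For the necessity direction, condition (a) is immediate from basic basis-theoretic facts: if $Te_{n_0}=0$ then the candidate basis $\{Te_n\}$ contains the zero vector, and if the range of $T$ is not dense then $\{Te_n\}$ fails to span $\mathcal{H}$, so one must have $\ker T=(0)$ and $\overline{\operatorname{Ran}T}=\mathcal{H}$, i.e.\ $\ker T^{*}=(0)$. For (b), with quasinormality bounds $c\leq \|Te_n\|\leq C$ in hand, I would choose $q\in(0,1)$ smaller than $c/C$ and argue by contradiction: if some spectral slab projection $E_{|T|}([q^{n_0+1},q^{n_0}])$ had finite positive rank $d$, the corresponding finite-dimensional reducing subspace $\mathcal{M}$ would split $T$ as $T_{\mathcal{M}}\oplus T_{\mathcal{M}^{\perp}}$ whose spectra live in disjoint $q$-adic annuli. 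A pigeonhole count on the $d$ coordinates of $\mathcal{M}$, combined with the quasinorm bounds and the spectral gap, would force all but finitely many basis vectors into $\mathcal{M}^{\perp}$, and the surviving finitely many can at best yield an unconditional extension, contradicting the conditional character of the basis.

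For sufficiency I would partition the spectrum of $|T|$ into slabs $I_n=[q^{n+1},q^n]$, obtain the orthogonal decomposition $\mathcal{H}=\bigoplus_n \mathcal{H}_n$ with $\mathcal{H}_n=E_{|T|}(I_n)\mathcal{H}$, and use (b) to conclude that each nonzero $\mathcal{H}_n$ is infinite-dimensional with $|T|\!\restriction_{\mathcal{H}_n}$ comparable to $q^n I$. On each such $\mathcal{H}_n$ I would construct a local quasinormal conditional basis via the block-triangular perturbation of an ONB from Example \ref{Example: QNBnotBounded} (the $\alpha_j$-construction with $\sum j\alpha_j^2<\infty$ and $\sum \alpha_j=\infty$), and then interleave the local bases into a global Schauder basis of $\mathcal{H}$; pulling back levelwise by $|T|^{-1}$ on each slab recovers an ONB whose $T$-image is the desired conditional quasinormal basis.

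The main obstacle is the sufficiency direction, specifically controlling the global Schauder constant during the interleaving: the local bases on different $\mathcal{H}_n$ live at incomparable scales $q^n$, and the uniform boundedness of the partial-sum operators on all of $\mathcal{H}$ rests exactly on the geometric separation of scales (which makes the biorthogonal expansions essentially orthogonal across slabs) together with the infinite-dimensional room at each scale guaranteed by (b)---without this room, no honest conditional block can be fit into a single slab, which is also the structural reason behind the necessity argument above.
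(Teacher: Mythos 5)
This theorem is not proved in the paper at all: it is quoted from Olevskii \cite{Ole} (main theorem 1, p.~479) and used as a black box, so there is no internal argument to compare yours against. Judged on its own terms, your sketch contains a structural gap in the sufficiency direction that is more serious than the uniform-boundedness issue you flag yourself. Write $\mathcal{H}_n=E_{|T|}([q^{n+1},q^n])\mathcal{H}$ and let $A_n$ be the restriction of $|T|$ to $\mathcal{H}_n$. Since $\sigma(A_n)\subseteq[q^{n+1},q^n]$, the operator $A_n$ is invertible on $\mathcal{H}_n$ with $\|A_n\|\,\|A_n^{-1}\|\leq q^{-1}$; consequently $A_n$ maps any ONB of $\mathcal{H}_n$ to a Riesz basis of $\mathcal{H}_n$, which is necessarily unconditional, and conversely the $A_n^{-1}$-preimage of a conditional basis of $\mathcal{H}_n$ is again a conditional basis, never an orthonormal system. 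So the plan ``build a local conditional basis on each slab and pull back levelwise by $|T|^{-1}$ to recover an ONB'' cannot work: no conditionality can be generated inside a single spectral slab. In Olevskii's construction the conditionality is produced precisely by choosing ONB vectors that mix spectral components living at \emph{different} scales $q^{m}$ and $q^{n}$ (via block orthogonal ``Olevskii matrices''), and the infinite-dimensionality in (b) is what allows this cross-scale pairing to be carried out infinitely often while keeping $\|Te_k\|$ bounded above and below. This cross-scale mechanism is entirely absent from your sketch, and it is the heart of the theorem.

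The necessity of (b) is also not established by your argument. The negation of (b) reads: for \emph{every} $q\in(0,1)$ there is some $n$ with $\dim E_{|T|}([q^{n+1},q^n])\mathcal{H}<\infty$; your contradiction argument fixes one $q<c/C$ and one finite-rank slab, which does not engage this quantifier structure. Moreover, splitting off the finite-dimensional reducing subspace $\mathcal{M}$ does not ``force all but finitely many basis vectors into $\mathcal{M}^{\perp}$'': each $Te_k$ will in general have nonzero components in both $\mathcal{M}$ and $\mathcal{M}^{\perp}$, and the assertion that finitely many exceptional vectors ``can at best yield an unconditional extension'' is not a theorem --- conditionality is a global property of the system of natural projections and is neither created nor detected by finitely many vectors. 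What is sound in the proposal is the reduction to $T=|T|\geq 0$ via polar decomposition (the same opening move as in the proof of Theorem \ref{Theorem: XT can be strongly irreducible}) and the necessity of (a); everything past that point needs the actual quantitative machinery of \cite{Ole}.
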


\begin{corollary}
There is an operator $T \in L(\mathcal{H})$ satisfying following properties:\\
1. $T$ is a strongly irreducible non-compact operator;\\
2. There exists some ONB such that $T$ has a matrix which is a conditional Schauder matrix.
\end{corollary}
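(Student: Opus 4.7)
The plan is to follow the two-step template of the two preceding corollaries (first seed a matrix that is already a conditional Schauder matrix, then ``rotate'' it into a strongly irreducible operator via the main theorem), replacing the compact diagonal $K=\mathrm{diag}\{1,\tfrac{1}{2},\tfrac{1}{3},\dots\}$ by a \emph{non-compact} model to which Theorem \ref{Theorem: Olevskii} still assigns a conditional basis. The only genuinely new ingredient is therefore a non-compact bounded operator satisfying both of Olevskii's spectral conditions simultaneously.

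For the seed, I would fix any $q\in(0,1)$, identify $\mathcal{H}$ with $L^2([0,1])$, and take $A$ to be multiplication by $x$. Then $A$ is positive self-adjoint, so $(A^*A)^{1/2}=A$; its spectrum is the whole interval $[0,1]$, which gives simultaneously that $A$ is not compact, that $0\notin\sigma_p(A)=\sigma_p(A^*)$, and that the spectral projection of $A$ on each subinterval $[q^{n+1},q^n]$ is multiplication by $\chi_{[q^{n+1},q^n]}$ with infinite-dimensional range. Both conditions a) and b) of Theorem \ref{Theorem: Olevskii} hold, so $A$ is generating: some ONB $\{e_n\}_{n=1}^{\infty}$ has $\{Ae_n\}_{n=1}^{\infty}$ as a quasinormal conditional basis, which is to say that the matrix $F_A$ of $A$ in this ONB is a conditional Schauder matrix.

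Since $\mathrm{Ker}\,A=(0)$ and $\overline{\mathrm{Ran}\,A}=\mathcal{H}$, Theorem \ref{Theorem: XT can be strongly irreducible} delivers a unitary $U$ and a compact operator $K$ with $\|K\|$ small enough that $U+K$ is invertible and $T:=(U+K)A$ is strongly irreducible. The matrix of $T$ in the same ONB equals $MF_A$, where $M$ is the invertible matrix of $U+K$, and by Theorem \ref{BPS}(1) left multiplication by an invertible matrix preserves both the Schauder property and its conditional/unconditional character, so this matrix is again a conditional Schauder matrix. Lastly, $T$ is non-compact: otherwise $A=(U+K)^{-1}T$ would be compact by the ideal property, contradicting the continuous spectrum of $A$. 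The only non-routine point is the choice of seed operator in the second paragraph; after that, everything is a direct assembly of Theorems \ref{Theorem: XT can be strongly irreducible}, \ref{BPS}, and \ref{Theorem: Olevskii}, in perfect parallel with the proofs of the two immediately preceding corollaries.
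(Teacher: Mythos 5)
Your proposal is correct and follows essentially the same route as the paper: seed with a non-compact self-adjoint operator satisfying Olevskii's conditions (Theorem \ref{Theorem: Olevskii}) to get a conditional Schauder matrix, then apply Theorem \ref{Theorem: XT can be strongly irreducible} together with Theorem \ref{BPS}/\ref{BPSO} to make the operator strongly irreducible while preserving the conditional Schauder property and non-compactness. The only difference is that you instantiate the seed concretely (multiplication by $x$ on $L^2[0,1]$) and verify Olevskii's condition a) explicitly, where the paper simply takes an abstract self-adjoint operator satisfying condition b).
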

\begin{proof}
Let $T$ be a self-adjopint operator satisfying the conditional b). Then there is some unitary matrix $U$ such that
$TU$ is a conditional Schauder matrix. It is clear that $TU$ can not be compact. Then apply theorem \ref{Theorem: XT can be strongly irreducible} and theorem \ref{BPSO} to the operator $T$.
\end{proof}

\begin{corollary}
There is an operator $T \in L(\mathcal{H})$ satisfying following properties:\\
1. $T$ is a strongly irreducible non-compact operator;\\
2. There exists some ONB such that $T$ has a matrix which is a unconditional Schauder matrix.
\end{corollary}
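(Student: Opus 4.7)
The plan is to mirror the recipe used in the previous corollaries: start from a non-compact operator whose matrix in a suitable ONB is already an unconditional Schauder matrix, then left-multiply by an invertible $X = U + K$ produced by Theorem \ref{Theorem: XT can be strongly irreducible} to force the product to be strongly irreducible, and finally read off unconditionality of the resulting matrix from part~1 of Theorem \ref{BPS}.

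Concretely, I would fix an ONB $\{e_{n}\}_{n=1}^{\infty}$ of $\mathcal{H}$ and take $T$ to be the diagonal operator $T e_{n} = \lambda_{n} e_{n}$ for some bounded positive sequence $\{\lambda_{n}\}$ that is also bounded away from $0$, for instance $\lambda_{n} = 1 + 1/n$. Then $T$ is positive and self-adjoint, injective with dense range, and invertible on the infinite-dimensional Hilbert space $\mathcal{H}$, so in particular $T$ is non-compact. The matrix $M_{T}$ of $T$ in $\{e_{n}\}$ is diagonal with nonzero entries, so its column vectors $\{\lambda_{n} e_{n}\}$ form a scaled ONB, which is an orthogonal (hence unconditional) Schauder basis. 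Thus $M_{T}$ is an unconditional Schauder matrix.

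Next, I would apply Theorem \ref{Theorem: XT can be strongly irreducible} to $T$: for a sufficiently small $\epsilon > 0$ it supplies a unitary $U$ and a compact $K$ with $\|K\| < \epsilon$ such that $X := U + K$ is invertible and $XT$ is strongly irreducible. Setting $S := XT$, I note that $S$ is invertible (as a composition of invertibles) and therefore non-compact on the infinite-dimensional space $\mathcal{H}$. The matrix of $S$ in $\{e_{n}\}$ equals $M_{X} \cdot M_{T}$, and Theorem \ref{BPS}(1) says that left multiplication of an unconditional Schauder matrix by an invertible matrix again produces an unconditional Schauder matrix. Hence $S$ is a strongly irreducible, non-compact operator whose matrix in the ONB $\{e_{n}\}$ is an unconditional Schauder matrix, which is exactly what the corollary demands.

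The main obstacle in this proof is essentially nonexistent: the hard analytic work is buried in Theorem \ref{Theorem: XT can be strongly irreducible}, and the preservation of the unconditional-Schauder-matrix property under left multiplication by an invertible matrix is exactly Theorem \ref{BPS}(1). The only genuine design choice is the initial $T$; a diagonal positive self-adjoint operator with spectrum inside $[\delta, M]$ for some $0 < \delta \leq M < \infty$ is simultaneously non-compact and produces an unconditional basis via its column vectors. Note that, unlike the non-compact conditional corollary, we do not need Olevskii's Theorem \ref{Theorem: Olevskii} here, precisely because a diagonal operator on its eigen-ONB already delivers an unconditional (in fact orthogonal) basis of columns, whereas obtaining a \emph{conditional} basis required Olevskii's criterion to produce a genuinely nondiagonal construction.
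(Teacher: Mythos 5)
Your proof is correct, and it lands on the same underlying fact the paper exploits: an invertible operator has columns forming a Riesz (hence unconditional) basis, so any invertible strongly irreducible operator answers the corollary. The difference is in how you package this. The paper simply asserts (citing proposition 2.20 of \cite{Cao-Tian-Hou} together with Theorem \ref{Theorem: XT can be strongly irreducible}) that \emph{every} invertible strongly irreducible operator satisfies both requirements, and exhibits one as $\lambda-T$ with $T$ strongly irreducible and $\lambda\notin\sigma(T)$, using that strong irreducibility survives translation by a scalar. You instead build a concrete witness from scratch: start with the invertible diagonal operator $\mathrm{diag}(1+1/n)$, whose matrix is visibly an unconditional Schauder matrix, apply Theorem \ref{Theorem: XT can be strongly irreducible} with $\epsilon$ small to get an invertible $X=U+K$ with $XT$ strongly irreducible, and then transport unconditionality through Theorem \ref{BPS}(1). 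Your route is more self-contained with respect to the results stated in this paper (it replaces the external proposition 2.20 by Theorem \ref{BPS}(1) applied to a diagonal matrix), at the cost of being a single example rather than the paper's stronger observation that all invertible strongly irreducible operators qualify; your closing remark that Olevskii's theorem is not needed here matches the paper, which reserves it for the conditional non-compact case.
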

\begin{proof}
In fact by proposition 2.20 in paper \cite{Cao-Tian-Hou} and theorem \ref{Theorem: XT can be strongly irreducible}, all
invertible and strongly irreducible operators satisfy the requirements of corollary. For example, given an irreducible operator
$T$ and a complex number $\lambda \notin \sigma(T)$, then $\lambda-T$ is such one operator.
\end{proof}

The first requirement of above lemmas are in the operator theory category while the others are in basis theory
category. The first one ask that the matrix be good in the sense of Jordan block; the second one ask that
the matrix be nice from the basis viewpoint, that is, the column vectors comprise a basis.

\subsection{Small disturbance on the basis const}

Now we consider the ``small'' condition appearing in the theorem \ref{Theorem: XT can be strongly irreducible}, it ensure us
to get a new basis with a small disturbance on both of the basis const and the unconditional const.

Recall that a sequence $\psi=\{f_{n}\}_{n=1}^{\infty}$ is called a Schauder basis of the Hilbert space $\mathcal{H}$
if and only if for every vector $x \in \mathcal{H}$ there exists a unique sequence $\{\alpha_{n}\}_{n=1}^{\infty}$ of complex numbers
such that the partial sum sequence $x_{k}=\sum_{n=1}^{k} \alpha_{n}f_{n}$ converges to $x$ in norm.

Denote by $P_{k}$ the the diagonal matrix with the first $k-$th entries on diagonal line equal to 1 and $0$ for others.
Then as an operator $P_{k}$ represents the orthogonal projection
from $\mathcal{H}$ to the subspace $\mathcal{H}^{(k)}=span \{e_{1}, e_{2}, \cdots, e_{k}\}$.

\begin{lemma}\label{Matrix Form}
Assume that $\{e_{k}\}_{k=1}^{\infty}$ is a fixed ONB of $\mathcal{H}$.
Suppose that an $\omega \times \omega$ matrix $F=(f_{ij})$
satisfies the following properties:

1. Each column of the matrix $F$ is a $l^{2}-$sequence;

2. $F$ has a unique left inverse matrix $G^{*}=(g_{kl})$ such that each row of $G^{*}$ is also a $l^{2}-$sequence;

3. Operators $Q_{k}$ defined by the matrix $Q_{k}=FP_{k}G^{*}$ are well-defined projections on $\mathcal{H}$
and converges to the unit operator $I$ in the strong operator
topology.

Then the sequence $\{f_{k}\}_{k=1}^{\infty}, f_{k}=\sum_{j=1}^{\infty} f_{ij}e_{i}$ must be a Schauder basis.
\end{lemma}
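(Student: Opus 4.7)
The plan is to extract explicit candidate expansion coefficients from the left inverse $G^*$, use the strong convergence $Q_k \to I$ to confirm that the resulting partial sums converge to $x$, and then exploit biorthogonality coming from $G^*F=I$ to deduce uniqueness of the expansion.

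First, I would interpret the matrix $G^*=(g_{nl})$ as a sequence of bounded coefficient functionals. Since each row is an $l^2$-sequence, the formula
\[
\phi_n(x) := \sum_{l=1}^{\infty} g_{nl}\langle x,e_l\rangle
\]
defines a bounded linear functional on $\mathcal{H}$ for every $n$. Setting $\alpha_n(x)=\phi_n(x)$ and unfolding the matrix product $Q_k = F P_k G^*$ one sees that
\[
Q_k x \;=\; F P_k G^* x \;=\; \sum_{n=1}^{k}\alpha_n(x) f_n,
\]
the expression being meaningful because each column $f_n$ lies in $\mathcal{H}$ by hypothesis~1 and the outer application of $F$ only involves the first $k$ coefficients. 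Hypothesis~3 then gives $Q_k x\to x$ in norm, so $x=\sum_{n=1}^{\infty}\alpha_n(x)f_n$ in the Schauder sense. This proves existence of the expansion.

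For uniqueness, I would translate the matrix identity $G^*F=I$ entrywise. The $(n,m)$ entry of $G^*F$ is $\sum_l g_{nl}f_{lm}$, which equals $\phi_n(f_m)$ under the identifications above; hence $\phi_n(f_m)=\delta_{nm}$, i.e. the functionals $\{\phi_n\}$ are biorthogonal to $\{f_n\}$. Given any convergent expansion $x=\sum_n \beta_n f_n$, applying the bounded functional $\phi_m$ term-by-term to the partial sums and passing to the limit yields $\phi_m(x)=\beta_m$. Since also $\phi_m(x)=\alpha_m(x)$, the coefficients are uniquely determined, and $\{f_n\}$ is a Schauder basis.

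The main obstacle is purely notational: one must justify that the formal matrix product $F P_k G^*$ really defines a bounded operator on $\mathcal{H}$ and that its action on $x$ agrees with the coefficient-functional description above. Both points reduce to absolute convergence of the relevant sums, which is guaranteed by the $l^2$ hypotheses on rows of $G^*$ and columns of $F$. Once this translation between the matrix language and the functional language is in place, the rest of the argument is a direct application of hypothesis~3 for existence and biorthogonality for uniqueness.
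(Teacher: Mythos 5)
Your proposal is correct. The paper itself states Lemma \ref{Matrix Form} without giving a proof, so there is nothing to compare against; your argument is the natural one the statement calls for: the rows of $G^{*}$ give bounded coefficient functionals $\phi_{n}$, the matrix identity $Q_{k}=FP_{k}G^{*}$ (both sides bounded, with the same matrix entries relative to the ONB, hence equal as operators) identifies $Q_{k}x$ with the partial sum $\sum_{n=1}^{k}\phi_{n}(x)f_{n}$, hypothesis 3 then yields existence of the expansion, and the biorthogonality $\phi_{n}(f_{m})=\delta_{nm}$ coming from $G^{*}F=I$ (an absolutely convergent sum by Cauchy--Schwarz) together with continuity of $\phi_{m}$ yields uniqueness. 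No gaps.
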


The projection $FP_{n}G^{*}$ is just the $n-$th ``natural projection'' so called in \cite{B}(p354).
It is also the \text{$n-$th partial sum operator} so called in \cite{Singer}(definition 4.4, p25). Now we can translate
theorem 4.1.15 and corollary 4.1.17 in \cite{B} into the following
\begin{proposition}\label{BC1}
If $F$ is a Schauder matrix, then $M=\sup_{n} \{||FP_{n}G^{*}||\}$ is a finite const.
\end{proposition}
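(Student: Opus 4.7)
The plan is to derive this from the Banach--Steinhaus uniform boundedness principle applied to the sequence of ``natural projections'' $Q_n = FP_nG^*$. All the substantive work has already been done in Lemma \ref{Matrix Form}: the hypotheses there tell us that each $Q_n$ is a well-defined bounded projection on $\mathcal{H}$, and that $Q_n \to I$ in the strong operator topology as $n \to \infty$.

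First I would observe that the assumption that $F$ is a Schauder matrix guarantees (via Lemma \ref{Matrix Form}, or equivalently by the definition of a Schauder basis applied to the column vectors $\{f_k\}$) that for every $x \in \mathcal{H}$ the partial sums $Q_n x = \sum_{k=1}^{n} \alpha_k(x)\, f_k$ converge in norm to $x$, where $\alpha_k(x) = (G^* x)_k$ is the $k$-th coordinate in the basis expansion. In particular, for each fixed $x$ the sequence $\{Q_n x\}_{n=1}^{\infty}$ is convergent and therefore bounded in $\mathcal{H}$, i.e.\ $\sup_n \|Q_n x\| < \infty$.

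Next I would invoke the uniform boundedness principle: since $\{Q_n\}$ is a family of bounded operators on the Banach (in fact Hilbert) space $\mathcal{H}$ which is pointwise bounded on all of $\mathcal{H}$, it follows that $\sup_n \|Q_n\| < \infty$. This is exactly the statement $M = \sup_n \|FP_n G^*\| < \infty$.

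There is essentially no obstacle: the proof is a textbook application of Banach--Steinhaus, and the only nontrivial input (that $Q_n$ is a well-defined bounded projection converging strongly to $I$) has already been packaged in Lemma \ref{Matrix Form}. The only point to take care of is to confirm that the pointwise boundedness really holds for \emph{every} $x \in \mathcal{H}$, not just for finite linear combinations of the $f_k$'s; but this is immediate from norm-convergence of $Q_n x \to x$, which is exactly the Schauder basis hypothesis.
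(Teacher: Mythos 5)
The paper gives no argument of its own for this proposition -- it simply ``translates'' Theorem 4.1.15 and Corollary 4.1.17 of \cite{B} -- so the real question is whether your argument stands on its own, and there is a gap in it. You invoke Banach--Steinhaus for the family $Q_n=FP_nG^*$, which requires knowing beforehand that each $Q_n$ is a \emph{bounded} operator on $\mathcal{H}$. You claim this is ``packaged in Lemma \ref{Matrix Form},'' but that lemma runs in the opposite direction: boundedness of the projections $Q_k$ and their strong convergence to $I$ are \emph{hypotheses} there, used to conclude that the columns of $F$ form a basis. Starting instead from ``$F$ is a Schauder matrix,'' the boundedness of $Q_n x=\sum_{k\le n}\alpha_k(x)f_k$ is exactly the statement that the coordinate functionals $\alpha_k$ are continuous (equivalently, that the rows of $G^*$ are $\ell^2$-sequences representing bounded functionals), and this is the substantive analytic content of the cited textbook results. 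It is usually proved by introducing the norm $|||x|||=\sup_n\|\sum_{k\le n}\alpha_k(x)f_k\|$, showing completeness, and applying the open mapping theorem -- an argument which in fact yields the uniform bound $\sup_n\|Q_n\|<\infty$ directly, without a separate appeal to Banach--Steinhaus.

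So your pointwise-boundedness observation ($Q_nx\to x$ for every $x$, hence $\sup_n\|Q_nx\|<\infty$) is fine, and the uniform boundedness principle is a legitimate way to finish \emph{once} continuity of the coordinate functionals is established; but as written the proof assumes the hardest step. To repair it, either cite the continuity of coordinate functionals for a Schauder basis explicitly (as the paper implicitly does via \cite{B}) or include the completeness/open-mapping argument sketched above.
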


The const $M$ is called the \textsl{basis const} for the basis $\{f_{n}\}_{n=1}^{\infty}$.

Assume that $\psi=\{f_{n}\}_{n=1}^{\infty}$ is a basis.
For a subset $\Delta$ of $\mathbb{N}$, denote by $P_{\Delta}$ the diagonal matrix defined as
$P_{\Delta}(nn)=1$ for $n \in \Delta$ and $P_{\Delta}(nn)=0$ for $n \notin \Delta$.
The projection $Q_{\Delta}=F_{\psi}P_{\Delta}G_{\psi}^{*}$ defined in above lemmas
is called a \textsl{natural projection}(see, definition 4.2.24, \cite{B}, p378).
In fact for a vector $x=\sum_{n=1}^{\infty} x_{n}f_{n}$, it is trivial to check $Q_{\Delta} x=\sum_{n \in \Delta} x_{n}f_{n}$.
Then we have a same result for the \textsl{ unconditional basis const}(cf, definition4.2.28, \cite{B}, p379):
\begin{proposition}\label{UBC1}
If $F_{\psi}$ is a Schauder matrix, then the unconditional basis
const of the basis $\psi$ is $M_{ub}=\sup_{\Delta \subseteq \mathbb{N}} \{||F_{\psi}P_{\Delta}G_{\psi}^{*}||\}$.
\end{proposition}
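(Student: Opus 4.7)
The plan is to reduce the claim to the (already essentially established) identification of $Q_\Delta = F_\psi P_\Delta G_\psi^\ast$ as the natural projection of the basis $\psi$, and then invoke the definition of the unconditional basis constant in the reference \cite{B}.

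First I would verify, in parallel with the computation noted just before the statement, that for any $x \in \mathcal{H}$ with unique basis expansion $x = \sum_{n=1}^\infty x_n f_n$ one has $Q_\Delta x = \sum_{n \in \Delta} x_n f_n$. The mechanism is transparent: multiplication by $G_\psi^\ast$ on the column coordinate vector of $x$ in the ONB $\varphi$ produces the coordinate sequence $(x_n)_{n=1}^\infty$ of $x$ with respect to $\psi$ (since $G_\psi^\ast$ is the unique left inverse of $F_\psi$ and the expansion coefficients are unique); the diagonal matrix $P_\Delta$ retains exactly the coordinates with index in $\Delta$; and left multiplication by $F_\psi$ reassembles the corresponding partial expansion $\sum_{n \in \Delta} x_n f_n$ in the ONB $\varphi$. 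This is the analogue for general $\Delta \subseteq \mathbb{N}$ of the finite initial-segment computation already carried out in Lemma \ref{Matrix Form} and Proposition \ref{BC1}.

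Second, by Definition 4.2.28 of \cite{B} (cited in the paragraph introducing the notion of natural projection), the unconditional basis constant of a basis is precisely $\sup_{\Delta \subseteq \mathbb{N}} \|Q_\Delta\|$, where the supremum is taken over all natural projections associated with subsets of indices. Combining this with the identification $Q_\Delta = F_\psi P_\Delta G_\psi^\ast$ from the previous step yields $M_{ub} = \sup_{\Delta \subseteq \mathbb{N}} \|F_\psi P_\Delta G_\psi^\ast\|$, exactly as claimed. If the basis fails to be unconditional, both sides are infinite and the identity still holds in the extended sense.

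The only content beyond a definitional unfolding is therefore the matrix-level justification that $Q_\Delta$ really acts as the expected natural projection on all of $\mathcal{H}$, and this is routine: the key obstacle to be careful about is ensuring that one may apply the left inverse $G_\psi^\ast$ and the infinite diagonal restriction $P_\Delta$ coordinatewise on a general vector $x \in \mathcal{H}$, which is precisely the kind of well-definedness issue addressed by the hypotheses of Lemma \ref{Matrix Form}. Once that is in hand, the proposition is immediate.
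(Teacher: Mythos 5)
Your proposal is correct and coincides with the paper's own treatment: the paper offers no argument beyond identifying $Q_{\Delta}=F_{\psi}P_{\Delta}G_{\psi}^{*}$ with the natural projection sending $x=\sum_{n}x_{n}f_{n}$ to $\sum_{n\in\Delta}x_{n}f_{n}$ and then citing Definition 4.2.28 of \cite{B}, which is exactly your two steps. The coordinatewise well-definedness point you flag is the same routine verification implicit in Lemma \ref{Matrix Form}, so nothing further is needed.
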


In virtue of the proposition 4.2.29 and theorem 4.2.32 in the book \cite{B}, we have
\begin{proposition}\label{Proposition: Unconditional Matrix}
For a Schauder basis $\psi$, it is a unconditional basis
if and only if $\sup_{\Delta \subseteq \mathbb{N}} \{||F_{\psi}P_{\Delta}G_{\psi}^{*}||\}<\infty$.
\end{proposition}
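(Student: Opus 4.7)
The plan is to reduce the equivalence to the standard sign--change characterization of unconditional bases together with the Cauchy tail criterion for unconditional convergence.

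I would first record the coefficient action of the natural projection: if $x=\sum_n \alpha_n f_n$, then $Q_\Delta x = \sum_{n\in\Delta}\alpha_n f_n$. This is immediate from Lemma \ref{Matrix Form}, since $G_\psi^*$ extracts the coefficients in $\psi$, $P_\Delta$ discards indices outside $\Delta$, and $F_\psi$ resynthesises. In particular $Q_\Delta Q_{[1,N]}=0$ whenever $\Delta\cap[1,N]=\emptyset$, and $Q_\Delta+Q_{\Delta^c}=I$.

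For the forward direction, assume $\psi$ is unconditional. Then for every sign sequence $\epsilon=(\epsilon_n)\in\{-1,+1\}^{\mathbb{N}}$ and every $x=\sum_n\alpha_n f_n\in\mathcal{H}$, the series $\sum_n\epsilon_n\alpha_n f_n$ converges by the definition of unconditional basis, so it defines a linear map $D_\epsilon x:=\sum_n\epsilon_n\alpha_n f_n$. A first application of Banach--Steinhaus to the finite partial sums shows that each $D_\epsilon$ is bounded, and a second application to the family $\{D_\epsilon\}_\epsilon$ yields $C:=\sup_\epsilon\|D_\epsilon\|<\infty$. Taking $\epsilon_n^\Delta=2\chi_\Delta(n)-1$, the identity $Q_\Delta=\tfrac{1}{2}(I+D_{\epsilon^\Delta})$ gives $\sup_\Delta\|Q_\Delta\|\le \tfrac{1}{2}(1+C)<\infty$.

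For the reverse direction, set $M:=\sup_\Delta\|Q_\Delta\|<\infty$. By the standard Cauchy criterion, a series $\sum_n x_n$ in a Banach space converges unconditionally if and only if for every $\varepsilon>0$ there is an $N$ such that $\bigl\|\sum_{n\in\Delta}x_n\bigr\|<\varepsilon$ for every finite $\Delta\subseteq\{N+1,N+2,\dots\}$. Fix $x=\sum_n\alpha_n f_n$ and $\varepsilon>0$. Since $\psi$ is a Schauder basis, $Q_{[1,N]}x\to x$; choose $N$ with $\|x-Q_{[1,N]}x\|<\varepsilon/M$. For any finite $\Delta\subseteq\{N+1,N+2,\dots\}$, disjointness of index sets gives $Q_\Delta Q_{[1,N]}=0$, hence
\[
\Bigl\|\sum_{n\in\Delta}\alpha_n f_n\Bigr\|=\|Q_\Delta x\|=\|Q_\Delta(x-Q_{[1,N]}x)\|\le M\,\|x-Q_{[1,N]}x\|<\varepsilon.
\]
Thus the expansion of every $x\in\mathcal{H}$ converges unconditionally, so $\psi$ is an unconditional basis.

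The main technical point I expect to need care is simply making sure $Q_\Delta$ is a \emph{bounded} operator on all of $\mathcal{H}$ for every (possibly infinite) $\Delta$ at the moment one writes $\sup_\Delta\|Q_\Delta\|$ in the hypothesis of the reverse direction; this is already guaranteed by the standing setup around Lemma \ref{Matrix Form}, and in the forward direction the boundedness drops out for free from the two Banach--Steinhaus applications. Beyond that step, everything is a routine translation between the coefficient picture and the projection picture.
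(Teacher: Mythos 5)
Your argument is correct, but it is worth saying how it sits relative to the paper: the paper does not actually prove this proposition at all --- it is quoted directly from Megginson (Proposition 4.2.29 and Theorem 4.2.32 of \cite{B}), so what you have written is essentially a self-contained reconstruction of the standard textbook proof that the paper outsources. Your two directions are the classical ones: sign-change operators $D_\epsilon$ plus uniform boundedness to get $\sup_\Delta\|Q_\Delta\|\le\tfrac12(1+\sup_\epsilon\|D_\epsilon\|)$, and the Cauchy (finite-subset tail) criterion together with $Q_\Delta Q_{[1,N]}=0$ for the converse; both are sound, and the converse direction as written is complete. Two small points deserve explicit mention. First, your ``second application'' of Banach--Steinhaus to the family $\{D_\epsilon\}$ needs the pointwise bound $\sup_\epsilon\|D_\epsilon x\|<\infty$ for each fixed $x$; this is true but not free --- it follows from the same Cauchy criterion you use later (choose $N$ for $\varepsilon=1$, split any finite signed subsum into its part in $[1,N]$, which ranges over a finite set, and a tail of norm $<1$), and a careful write-up should include that sentence. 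Second, for infinite $\Delta$ the expression $\|F_\psi P_\Delta G_\psi^*\|$ is only meaningful once one knows $F_\psi P_\Delta G_\psi^*$ represents an everywhere-defined operator; in the forward direction you get this from unconditional convergence of $\sum_{n\in\Delta}\alpha_n f_n$ (it is not supplied by Lemma \ref{Matrix Form}, which only treats the initial segments $P_k$), and in the reverse direction it should simply be read as part of the hypothesis. With those clarifications your proof is a fine, more elementary substitute for the citation; it buys self-sufficiency at the cost of rehearsing material the paper deliberately leaves to \cite{B}.
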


Now we can show that we can change a Schauder matrix to an equivalent Schauder matrix which represents a strongly irreducible operator
with small disturbance on basis const.

\begin{theorem}\label{Theorem:Small disturbance on basis const}
Suppose that $\psi=\{f_{n}\}_{n=1}^{\infty}$ be a basis and its corresponding Schauder matrix $F_{\psi}$ under
a given ONB $\varphi=\{e_{n}\}_{n=1}^{\infty}$ represents
a bounded operator $T_{\psi}$ in $L(\mathcal{H})$.  Then for any positive number $\epsilon >0$, there is a matrix $X$ of some invertible
operator $T \in L(\mathcal{H})$ such that following properties hold: \\
1. $XF_{\psi}$ is a strongly irreducible operator; \\
2. The column vector sequence $\psi^{'}$ of $XF_{\psi}$ comprise a basis sequence equivalent to $\psi$. Moreover, if we denote by
$M^{'}$ the basis const of $\psi^{'}$, then we can ask that the condition $|M-M^{'}|<\epsilon$ holds;
If $\psi$ is also a unconditional basis, we can also ask that the additional condition
$|M_{ub}^{'}-M_{ub}|<\epsilon$ holds(Here $M_{ub}^{'}$ denote the unconditional basis const of the basis $\psi^{'}$).
\end{theorem}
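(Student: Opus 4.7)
The plan is to apply Theorem~\ref{Theorem: XT can be strongly irreducible} to $T_{\psi}$ and verify that the compact perturbation can be chosen small enough so that the basis and unconditional basis constants of the resulting basis change by at most $\epsilon$. Since $\psi$ is a Schauder basis, $T_{\psi}$ maps an ONB to $\psi$, so $T_{\psi}$ is injective with dense range; hence the hypotheses of Theorem~\ref{Theorem: XT can be strongly irreducible} are satisfied. For any $\delta \in (0,1)$, the theorem yields a unitary $U$ and a compact $K$ with $\|K\| < \delta$ such that $X := U+K$ is invertible and $XT_{\psi}$ is strongly irreducible. By Theorem~\ref{BPS}(1), $XF_{\psi}$ is then a Schauder matrix whose column basis $\psi' = \{Xf_{n}\}_{n=1}^{\infty}$ is equivalent to $\psi$, and is unconditional iff $\psi$ is.

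The key quantitative input is the following relation between the two families of natural projections. For any $\Delta \subseteq \mathbb{N}$, if $y = \sum_{n} b_{n}(Xf_{n})$, then $X^{-1}y = \sum_{n} b_{n}f_{n}$ (since $X$ is bounded and invertible, so the partial sums on the right converge to $X^{-1}y$, and the expansion in $\psi$ is unique). Applying $P_{\Delta}^{\psi}$ and then $X$ gives $\sum_{n\in\Delta} b_{n}(Xf_{n}) = P_{\Delta}^{\psi'}y$, and thus
$$P_{\Delta}^{\psi'} \;=\; X\,P_{\Delta}^{\psi}\,X^{-1}.$$
This reduces the basis constants to conjugation estimates: $\|P_{\Delta}^{\psi'}\| \le \|X\|\|X^{-1}\|\cdot\|P_{\Delta}^{\psi}\|$ and symmetrically $\|P_{\Delta}^{\psi}\| \le \|X\|\|X^{-1}\|\cdot\|P_{\Delta}^{\psi'}\|$.

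To bound $\|X\|\|X^{-1}\|$, I use the unitarity of $U$: writing $X = U(I+U^{*}K)$ yields
$$\|X\| \le 1+\|K\|, \qquad \|X^{-1}\| \le \frac{1}{1-\|K\|}, \qquad \|X\|\|X^{-1}\|-1 \le \frac{2\|K\|}{1-\|K\|}.$$
Taking suprema over $n$ (respectively over all $\Delta$) in the conjugation estimate, I obtain
$$|M' - M| \le \frac{2\|K\|}{1-\|K\|}\,M, \qquad |M_{ub}' - M_{ub}| \le \frac{2\|K\|}{1-\|K\|}\,M_{ub}.$$
Thus, given $\epsilon > 0$, I choose $\delta \in (0,1)$ small enough that $\tfrac{2\delta}{1-\delta}\max(M,M_{ub}) < \epsilon$, and apply Theorem~\ref{Theorem: XT can be strongly irreducible} with this $\delta$ to produce the desired $X$.

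The only conceptual subtlety is the identity $P_{\Delta}^{\psi'} = XP_{\Delta}^{\psi}X^{-1}$ and the observation that \emph{all} distortion in the basis constants comes from the compact tail $K$, never from the unitary part $U$ (because unitary conjugation is isometric on $\mathcal{L}(\mathcal{H})$, so the non-trivial unitary rearrangement inherent in the construction of Theorem~\ref{Theorem: XT can be strongly irreducible} is harmless for these constants). No new perturbation theory is required beyond what Theorem~\ref{Theorem: XT can be strongly irreducible} already provides.
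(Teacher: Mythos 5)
Your proposal is correct and follows essentially the same route as the paper: apply Theorem \ref{Theorem: XT can be strongly irreducible} with a small compact perturbation $K$ so that $\|X\|\,\|X^{-1}\|$ is close to $1$, note that the natural projections of $\psi'$ are the conjugates $XP^{\psi}_{\Delta}X^{-1}$ (the paper writes this as $XF_{\psi}P_{k}G_{\psi}^{*}X^{-1}$), and conclude via Propositions \ref{BC1} and \ref{UBC1}. The only difference is cosmetic: you explicitly justify the conjugation identity and derive $\|X\|\le 1+\|K\|$, $\|X^{-1}\|\le(1-\|K\|)^{-1}$ from unitarity of $U$, steps the paper merely asserts.
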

\begin{proof}
By our main theorem \ref{Theorem: XT can be strongly irreducible}, we only need to show that the second property holds
when $X=U+K$ been chosen carefully. Clearly for a given $\delta>0$, we can choose a $X=U+K$ such that both
$1-\frac{\delta}{2}<||U+K||<1+\frac{\delta}{2}$ and $1-\frac{\delta}{2}<||(U+K)^{-1}||<1+\frac{\delta}{2}$ hold.
Then we have
$$
\begin{array}{c}
||XF_{\psi}P_{k}G^{*}_{\psi}X^{-1}|| \le ||X||\cdot ||F_{\psi}P_{k}G^{*}_{\psi}||\cdot ||X^{-1}||, \\
||F_{\psi}P_{k}G^{*}_{\psi}|| \le ||X^{-1}||\cdot ||XF_{\psi}P_{k}G^{*}_{\psi}X^{-1}||\cdot ||X||.
\end{array}
$$
Hence following inequality holds for any $k \in \mathbb{N}$:
$$
(1+\delta)^{-2} \cdot ||F_{\psi}P_{k}G^{*}_{\psi}|| \le ||XF_{\psi}P_{k}G^{*}_{\psi}X^{-1}|| \le (1+\delta)^{2}\cdot ||F_{\psi}P_{k}G^{*}_{\psi}||.
$$
Now by proposition \ref{BC1}, we have
$$
M=\sup_{n} \{||FP_{n}G^{*}||\} \hbox{ and } M^{'}=\sup_{n} \{||XFP_{n}G^{*}X^{-1}||\}.
$$
Therefore by choosing $\delta < \frac{\epsilon}{4M}$ we can prove the first part of property 2; the second part of property 2
can be proved in just the same way by lemma \ref{Proposition: Unconditional Matrix}.
\end{proof}

{\bf Acknowledgements} A large part of this article was developed
during the seminar on operator theory held at Jilin University in China.

\end{document}